\newcommand{\R}{\mathbb{R}}
\newcommand{\N}{\mathbb{N}}
\newcommand{\T}{\mathbb{T}}
\newcommand{\Z}{\mathbb{Z}}
\newcommand{\C}{\mathscr{C}}
\newcommand{\V}{\mathcal{V}}
\newcommand{\A}{\mathcal{A}}
\newcommand{\Y}{\mathcal{Y}}
\newcommand{\Hdiv}{\mathcal{H}}
\newcommand{\ud}{\,\mathrm{d}}
\renewcommand{\d}{\partial}
\renewcommand{\S}{\mathscr S}
\let \div \relax
\DeclareMathOperator{\div}{div}
\DeclareMathOperator{\dist}{dist}
\DeclareMathOperator{\supp}{supp}
\newcommand{\ovl}[1]{\overline{#1}}
\newtheorem{thm}{THEOREM}[section]
\newtheorem{remark}[thm]{REMARK}
\newtheorem{lemma}[thm]{LEMMA}
\newtheorem{definition}[thm]{DEFINITION}
\newtheorem{proposition}[thm]{PROPOSITION}
\newtheorem{corollary}[thm]{COROLLARY}
\newcounter{thmbiss}
\title[Controllability of the 2-D Vlasov-Navier-Stokes system]{Local null-controllability of the 2-D Vlasov-Navier-Stokes system}
 \author[I.~Moyano]{Iv\'an Moyano}
  \email{\href{mailto:ivan.moyano@math.polytechnique.fr}{ivan.moyano-garcia@.polytechnique.edu}}
\thanks{{\em Acknowledgment:}
The author would like to very much thank Daniel Han-Kwan (CNRS and CMLS, Ecole Polytechnique) for suggesting him this problem and for many fruitful discussions and advices.
}
\begin{document}
\maketitle

\begin{abstract}
We prove a null controllability result for the Vlasov-Navier-Stokes system, which describes the interaction of a large cloud of particles immersed in a fluid. We show that one can modify both the distribution of particles and the velocity field of the fluid from any initial state to the zero steady state, by means of an internal control. Indeed, we can modify the non-linear dynamics of the system in order to absorb the particles and let the fluid at rest. The proof is achieved thanks to the return method and a Leray-Schauder fixed-point argument. \\  

\noindent
  {\sc Keywords:} {Vlasov-Navier-Stokes system; kinetic theory ; kinetic-fluid model; controllability; return method.}
\end{abstract}

\tableofcontents

\section{Introduction}
The Vlasov-Navier-Stokes system describes the behaviour of a large cloud of particles immersed in a viscous incompressible fluid. The interaction of the particles with the surrounding fluid is taken into account through a coupling between a Vlasov equation, modelling the transport of the particles, and the Navier-Stokes system, governing the evolution of the fluid (see Section \ref{sec: reviewVNS} for more details). In this article, we are interested in the controllability of the dynamics of both the particles and the fluid by means of a control located in a subset of the phase space. \par 

More precisely, we consider the Vlasov-Navier-Stokes system in the 2-dimensional torus $\T^2 := \R^2 / \Z^2$, which writes, for $T>0$ and $\omega \subset \T^2$, 
\begin{equation}
\left\{  \begin{array}{ll}
\partial_t f + v\cdot \nabla_x f + \div_v\left[ (u-v)f  \right]= 1_{\omega}(x)G, & (t,x,v) \in (0,T) \times \T^2 \times \R^2, \\
\partial_t u + \left( u\cdot \nabla_x\right) u - \Delta_x u + \nabla_x p = j_f - \rho_f u, & (t,x)\in (0,T) \times \T^2, \\
\div_x u(t,x) = 0, & (t,x) \in (0,T) \times \T^2, \\
f_{|t=0}=f_0(x,v), & (x,v) \in \T^2 \times \R^2,  \\
u_{|t=0}= u_0(x), & x\in \T^2, 
\end{array} \right.
\label{eq:VNS}
\end{equation} where 
\begin{equation}
j_f(t,x) := \int_{\R^2} vf(t,x,v) \ud v, \quad \rho_f(t,x):= \int_{\R^2} f(t,x,v) \ud v.
\label{eq:currentdendisty}
\end{equation} We shall suppose throughout the article that the Lebesgue measure of the torus is normalised, i.e., $\int_{\T^2} \ud x =1$. \par

\subsection{Main results}
Before stating our main results, we give the notion of solution that we use in this work and we explain the controllability problem that we want to solve.

\subsubsection{Strong solutions}
\begin{definition}
Let $T>0$. Let $f_0 \in \C^1(\T^2 \times \R^2)$ and $u_0 \in H^1(\T^2;\R^2)$ with $\div_x u_0 = 0$. Let $G\in \C^0([0,T] \times \T^2 \times \R^2)$. We say that $(f,u)$ is a strong solution of system (\ref{eq:VNS}) if the following conditions are satisfied:
\begin{align}
& f\in \C^1([0,T]\times \T^2 \times \R^2), \label{eq:regularitytranport} \\
& \textrm{the Vlasov equation is satisfied for every } (t,x,v) \in (0,T) \times \T^2 \times \R^2, \label{eq:Vlasovclassical} \\
& \sup_{t\in [0,T]} \int_{\T^2} \int_{\R^2} \left( 1 + |v| +  |v|^2 \right) f(t,x,v) \ud x \ud v < \infty, \label{eq:moment2borne} \\
& u \in \C^0([0,T];H^1(\T^2;\R^2)) \cap L^2(0,T;H^2(\T^2;\R^2)), \label{eq:champregulier} \\
& \div_x u(t,x)=0, \quad \forall t\in [0,T],
\end{align} and for any $\psi \in \C^1([0,T];H^1(\T^2;\R^2))$ with $\div_x \psi(t,x)=0$ and $t\in (0,T]$, one has 
\begin{align}
& \int_{\T^2} u(t)\psi(t) \ud x + \int_0^t \int_{\T^2} \left( \nabla u : \nabla \psi - u \otimes u \cdot \nabla \psi -u\partial_t \psi \right) \ud s \ud x \nonumber \\
& \quad \quad \quad \quad = \int_{\T^2} u_0 \psi(0) \ud x + \int_0^t \int_{\T^2} \left( j_f(s) - \rho_f(s) u(s)  \right) \psi(s) \ud s \ud x, \label{eq:formulationfaibleVNS}
\end{align} where 
\begin{equation*}
\nabla u : \nabla \psi:= \sum_{j,k=1}^2 \partial_j u^k \partial_j \psi^k, \quad u \otimes u \cdot \nabla \psi := \sum_{j,k=1}^2 u^j u^k \partial_j \psi^k. 
\end{equation*} Let us recall that, under the incompressibility condition, the convection term satisfies that $(u\cdot \nabla) u = \div(u \otimes u)$, with the previous notation.
\label{definition:strong}
\end{definition}

\subsubsection{The controllability problem}

We are interested in the controllability properties of system (\ref{eq:VNS}), by means of an internal control, in the following sense. Given $f_0$ and $f_1$ in a suitable function space and given $T>0$, is it possible to find a control $G$ steering the solution of (\ref{eq:VNS}) from $f_0$ to $f_1$, in time $T$? In other words, we want to find $G$ such that
\begin{equation*}
f(T,x,v) = f_1(x,v), \quad \forall \,  (x,v) \in  \T^2  \times \R^2.
\end{equation*} 
Another interesting point is to find $G$ in such a away it could modify not only the dynamics of the distribution function $G$ but also the evolution of the field $u$ from $u_0$ to a given $u_1$ in time $T$. \par  

In this article, we shall give a positive answer to this question for $f_1 = 0$ and $u_1 =0$. We need a geometric assumption on the region $\omega$, stated in \cite{Glass} and used in \cite[Definition 1.2, p.699]{GDHK2}.

\begin{definition}[Strip assumption] An open set $\omega \subset \T^2$ satisfies the strip assumption if it there exists a straight line of $\R^2$ whose image $H$ by the canonical surjection $s:\R^2 \rightarrow \T^2$ is closed and included in $\omega$. We shall call $n_H$ a unit vector orthogonal to $H$, in such a way $H=\left\{ x\in \R^2; x\cdot n_H = 0  \right\}$. For any $l>0$, we denote 
\begin{equation*}
H_l := H + [-l,l]n_H.
\end{equation*} Let us observe that, as $H$ is closed in $\T^2$, there exists $\delta>0$ such that 
\begin{equation}
H_{2\delta} \subset \omega
\label{eq:hyperplan}
\end{equation} and such that $4\delta$ is smaller than the distance between two successive lines in $s(H)$.
\label{definition:strip}
\end{definition}

Under this geometric assumption and suitable hypothesis on the data $u_0$, $f_0$ and $f_1$, we obtain the following local null-controllability result in large time.

\begin{thm}
Let $\gamma>2$, and let $\omega \subset \T^2$ satisfy the strip assumption of Definition \ref{definition:strip}. There exists $\epsilon>0$, $M>0$ and $T_0 >0$ such that for every $T\geq T_0$, $f_0 \in \C^1(\T^2\times \R^2) \cap W^{1,\infty}(\T^2 \times \R^2)$ and $u_0$ satisfying that 
\begin{equation}
u_0 \in \C^1(\T^2;\R^2) \cap H^2(\T^2;\R^2), \quad \quad \div_x u_0 = 0 , \quad \quad \|u_0\|_{H^{\frac{1}{2}}(\T^2)} \leq M, 
\label{eq:massechampinitiale} 
\end{equation} and that 
\begin{align}
&\|f_0\|_{\C^1(\T^2\times\R^2)} + \| (1+|v|)^{\gamma+2} f_0\|_{\C^0(\T^2\times \R^2)} \leq \epsilon, \label{eq:smalldata} \\
&\exists \kappa>0, \quad \sup_{\T^2 \times \R^2} (1+|v|)^{\gamma} \left(|\nabla_x f_0| + |\nabla_v f_0| \right)(x,v) \leq \kappa, \label{eq:conditionforuniqueness}  
\end{align} there exists a control $G \in \C^0([0,T]\times \T^2 \times \R^2)$ such that a strong solution of (\ref{eq:VNS}) with $f|_{t=0} = f_0$ and $u|_{t=0} = u_0$ exists, is unique and satisfies \begin{equation}
f|_{t=T} = 0, \quad u|_{t=T}=0.
\label{eq:nullcontrollability}
\end{equation}
\label{thm:controllability}  
\end{thm}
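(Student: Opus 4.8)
The plan is to combine Coron's return method with a Leray--Schauder fixed-point argument, using the particle current as an indirect control on the fluid. The first observation is that linearising \eqref{eq:VNS} around the zero state is hopeless for the kinetic part: in the absence of fluid the velocity characteristics solve $\dot v = -v$, so particles decelerate exponentially and travel only a bounded distance before stopping. Hence a particle with small velocity located far from $\omega$ can never be transported into the control region, and the linearised Vlasov equation fails to be controllable. To circumvent this I would introduce a reference trajectory carrying a large drift: using the strip assumption of Definition~\ref{definition:strip}, pick the normal direction $n_H$ and build a time-dependent, divergence-free reference velocity field $\bar u$ that is close to a large constant $c\, n_H$ on the bulk of $[0,T]$ while satisfying $\bar u|_{t=0}=\bar u|_{t=T}=0$. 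Along the associated characteristics $\dot X = V$, $\dot V = \bar u(t,X)-V$ the velocities relax to $\sim c\, n_H$, so over a large time $T$ every particle is swept a distance $\gtrsim cT$ in the direction $n_H$ and therefore crosses the strip $H_{2\delta}\subset\omega$. This is exactly the geometric control condition that the return method manufactures.

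Granting such a drift, the Vlasov equation off $\omega$ is a pure linear transport equation, so I would construct the control $G$ (supported in $\omega$) by the method of characteristics: follow $f_0$ forward along the flow and absorb the transported mass as each characteristic traverses the strip, arranging that nothing survives past the final crossing so that $f|_{t=T}=0$. Propagating the weights $(1+|v|)^{\gamma+2}$ and the first derivatives along the flow, together with the smallness \eqref{eq:smalldata} and the derivative bound \eqref{eq:conditionforuniqueness}, should yield a control $G$ of the required $\mathscr C^0$ regularity and the moment bound \eqref{eq:moment2borne}, while keeping $\rho_f$ and $j_f$ small.

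For the fluid I cannot act directly, so the current $j_f$ must play the role of a distributed control force. Here I would invoke the local null-controllability of the two-dimensional Navier--Stokes system (via Carleman and observability estimates for the Stokes operator), treating the friction term $-\rho_f u$ as a small, lower-order perturbation since $\rho_f$ is kept small. The point to verify is that the class of currents produced by admissible Vlasov controls is rich enough to steer $u$ from $u_0$ (small in $H^{\frac12}$) to $0$ in time $T$, simultaneously with the absorption of the particles.

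The two constructions are intertwined because the characteristics depend on the unknown $u=\bar u+\tilde u$ while the fluid forcing depends on $f$; I would therefore close the argument by a fixed point. Fixing a velocity field $u$ in a ball of, say, $\mathscr C^0([0,T];H^1)\cap L^2(0,T;H^2)$, I solve the Vlasov control problem to obtain $f$, then feed $j_f-\rho_f u$ into the controlled Navier--Stokes system to produce a new velocity; uniform moment estimates for the kinetic part, parabolic smoothing and energy estimates for the fluid, and a velocity-averaging plus Aubin--Lions compactness argument should supply the continuity and compactness needed for the Leray--Schauder theorem. The main obstacle I anticipate is precisely this coupling: one must guarantee, uniformly along the iteration and under the same smallness of the data, that (i) the $u$-dependent characteristics still satisfy the geometric control condition, so that $f|_{t=T}=0$ remains achievable, and (ii) the current required to enforce $u|_{t=T}=0$ is genuinely realisable by a control supported only in $\omega$. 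Balancing the large drift $c$, the large time $T$ and the smallness $\epsilon$ against these two compatibility requirements is where the real work lies.
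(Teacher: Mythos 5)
Your overall architecture (return method to create a drift along $n_H$, absorption of particles as they cross the strip, Leray--Schauder to close the coupling) is the same as the paper's, and your diagnosis of the obstruction caused by friction is correct. But there are two genuine gaps. First, you never explain how the reference field $\bar u\approx c\,n_H$ is \emph{realised} as a trajectory of the coupled system, i.e.\ driven only by the current of a distribution supported in $\omega$. This is the crux of Step~1: the paper invokes the \emph{global} exact controllability of the 2-D Navier--Stokes system on the torus (Coron--Fursikov, Theorem~\ref{thm:CoronFursikov}) to produce a smooth control $w_2$ supported in $\omega$ steering the fluid from $0$ to the stationary solution $n_H$, and then manufactures a reference distribution $\bar f=(\mathcal Z_1,\mathcal Z_2)(v)\cdot w_2(t,x)$ with $\rho_{\bar f}=0$ and $j_{\bar f}=w_2$, so that $w_2$ is exactly the drag force. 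The local null-controllability of Navier--Stokes via Carleman estimates that you propose cannot do this job: reaching the nonzero state $n_H$ from $0$ is not a null-controllability statement, and the smallness hypotheses of the local theory are incompatible with the large drift you need. Note also that the paper does not need a large amplitude $c$; it keeps $|n_H|=1$ and instead takes the intermediate time $T_3$ large, so that the ballistic displacement $\tfrac12(s-T_2)^2$ dominates (see (\ref{eq:choiceofT3})).

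Second, you place the null-controllability of the fluid \emph{inside} the fixed-point iteration, demanding that at each step the current steers $u$ to $0$ at time $T$ while simultaneously absorbing the particles; you correctly flag this compatibility problem but do not resolve it, and it is avoidable. In the paper's scheme the fixed point involves no fluid control at all: given $g\in\S_\epsilon$ one solves the \emph{uncontrolled} coupled Navier--Stokes system (\ref{eq:NavierStokesforg}) with force $j_g-\rho_g u$, and the stability estimate of Proposition~\ref{proposition:stabilityH32} guarantees that $u^g$ stays within $O(\sqrt{M^2+\epsilon^2})$ of $\bar u$ in $L^2_tL^\infty_x$, which is enough for the perturbed characteristics to still hit the absorption set $\gamma^{3-}$. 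The fixed point therefore only achieves $f|_{t=T}=0$ \emph{outside} $\omega$, with $u(T)$ whatever it happens to be. Only afterwards, in additional time $\tau_1+\tau_2$, does one kill the remaining distribution inside $\omega$ by a smooth cut-off in time and then apply Coron--Fursikov a second time (with target $\hat y\equiv 0$) to bring the fluid to rest, again encoding the fluid control as a current supported in $\omega$. Decoupling the two targets in this way is what makes the compatibility issue you worry about disappear. Finally, your proposal does not address uniqueness of the strong solution, which in the paper requires the weighted gradient bound (\ref{eq:conditionforuniqueness}), its propagation to the fixed point (Corollary~\ref{corollary:regularityofg}), and a Gronwall argument on $j_g,\rho_g$ combined with the $H^{1/2}$ stability estimate.
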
 The techniques developed in this article allow to obtain, in absence of the control term $1_{\omega}(x) G$, that the strong solutions of the homogeneous Vlasov-Navier-Stokes
  
  \begin{equation}
\left\{  \begin{array}{ll}
\partial_t f + v\cdot \nabla_x f + \div_v\left[ (u-v)f  \right] = 0, & (t,x,v) \in (0,T) \times \T^2 \times \R^2, \\
\partial_t u + \left( u\cdot \nabla_x\right) u - \Delta_x u + \nabla_x p = j_f - \rho_f u, & (t,x)\in (0,T) \times \T^2, \\
\div_x u(t,x) = 0, & (t,x) \in (0,T) \times \T^2, \\
f_{|t=0}=f_0(x,v), & (x,v) \in \T^2 \times \R^2,  \\
u_{|t=0}= u_0(x), & x\in \T^2, 
\end{array} \right.
\label{eq:VNShomogeneous}
\end{equation} are unique within a certain class. The result is the following.

\begin{thm}
Let $\gamma>2$ and $T>0$. Then, for any $M>0$, there exists $\epsilon>0$ such that for every $f_0 \in \C^1(\T^2\times \R^2) \cap W^{1,\infty}(\T^2 \times \R^2)$ and $u_0$ satisfying that 
\begin{equation*}
u_0 \in \C^1(\T^2;\R^2) \cap H^2(\T^2;\R^2), \quad \quad \div_x u_0 = 0 , \quad \quad \|u_0\|_{H^{\frac{1}{2}}(\T^2)} \leq M, 
\end{equation*} and that 
\begin{align}
&\|f_0\|_{\C^1(\T^2\times\R^2)} + \| (1+|v|)^{\gamma+2} f_0\|_{\C^0(\T^2\times \R^2)} \leq \epsilon, \label{eq:smalldatawp} \\
&\exists \kappa>0, \quad \sup_{\T^2 \times \R^2} (1+|v|)^{\gamma} \left(|\nabla_x f_0| + |\nabla_v f_0| \right)(x,v) \leq \kappa, \label{eq:conditionforuniquenesswp}  
\end{align} there exists a unique strong solution of (\ref{eq:VNShomogeneous}) with $f|_{t=0} = f_0$ and $u|_{t=0} = u_0$.
\label{thm:WP}  
\end{thm}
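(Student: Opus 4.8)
The plan is to construct the solution by a decoupling fixed-point scheme and to establish uniqueness separately through a stability estimate. Given a divergence-free field $u$ in a suitable ball $\K$ of $\C^0([0,T];H^1(\T^2))\cap L^2(0,T;H^2(\T^2))$, I first solve the Vlasov equation by the method of characteristics. The characteristic system $\dot X = V$, $\dot V = u(t,X)-V$ has a dissipative structure forcing $|V(t)|$ to remain comparable to $|V(0)|$ up to the size of $u$; expanding $\div_v[(u-v)f]=(u-v)\cdot\nabla_v f-2f$ gives the representation $f(t,x,v)=e^{2t}f_0(Z_{0,t}(x,v))$ along the backward flow $Z_{0,t}$. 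From this formula, together with the weighted bounds (\ref{eq:smalldatawp}) and (\ref{eq:conditionforuniquenesswp}), I read off $L^\infty$ and decay estimates for $f$ and, by differentiating the flow, for $\nabla_{x,v}f$. The choice $\gamma>2$ is exactly what renders the moments $\rho_f$, $j_f$ and their spatial gradients finite and controlled in $L^\infty_{t,x}$, since one must integrate weights of order $(1+|v|)^{1-\gamma-2}$ against $f$ and of order $(1+|v|)^{1-\gamma}$ against $\nabla f_0$.

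Second, I feed $j_f$ and $\rho_f$ into the fluid block and solve $\partial_t\tilde u+(\tilde u\cdot\nabla_x)\tilde u-\Delta_x\tilde u+\nabla_x p=j_f-\rho_f u$, $\div_x\tilde u=0$, $\tilde u|_{t=0}=u_0$, defining $\trans(u):=\tilde u$. Here I use the 2-D parabolic energy theory: testing against $\tilde u$ and against $-\Delta_x\tilde u$, using $\div_x\tilde u=0$ to eliminate the pressure and to absorb the convection term, yields a priori bounds for $\tilde u$ in $\C^0([0,T];H^1)\cap L^2(0,T;H^2)$ in terms of $\|u_0\|_{H^1}$ and of the source. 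Smallness of $f_0$ (hence of $j_f,\rho_f$) together with $\|u_0\|_{H^{1/2}}\le M$ is what makes $\K$ invariant under $\trans$. Compactness of $\trans$ follows from parabolic smoothing and the Aubin–Lions lemma, and continuity from the stability of the characteristics; Schauder's theorem then produces a fixed point $u=\trans(u)$, which with its associated $f$ is a candidate strong solution.

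The main obstacle is regularity: to be a strong solution $f$ must lie in $\C^1$, which requires $Z_{0,t}$ to be $\C^1$ in $(x,v)$, \ie $\nabla_x u$ to be at least bounded (in fact essentially log-Lipschitz or better). Since $u\in L^2(0,T;H^2)$ only gives $\nabla_x u\in L^2(0,T;H^1)$ and $H^1(\T^2)\not\hookrightarrow L^\infty$, this embedding is borderline in 2-D and must be recovered by a maximal-regularity bootstrap. Because the source $j_f-\rho_f u$ belongs to $L^\infty(0,T;L^p)$ for every $p<\infty$ (again thanks to the weights with $\gamma>2$), Stokes maximal regularity places $\tilde u\in L^q(0,T;W^{2,p})\hookrightarrow L^q(0,T;W^{1,\infty})$ for some $p>2$, so that $\nabla_x u\in L^1(0,T;L^\infty)$ and the flow is genuinely $\C^1$; differentiating once more then closes the $\C^1$ regularity of $f$. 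The delicate bookkeeping is to arrange this improved regularity so that it is \emph{compatible} with, rather than destructive of, the invariance and compactness properties of $\trans$.

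Finally, for uniqueness I take two strong solutions $(f_1,u_1)$, $(f_2,u_2)$ with the same data and estimate $w:=u_1-u_2$ and $g:=f_1-f_2$. For $w$ I use the energy identity for the Navier–Stokes difference, absorbing the convection and drag terms via the $H^1$ (hence, by 2-D smoothing, $L^\infty_x$) control of $u_1,u_2$ and the smallness of $\rho_f$. For $g$ I use stability of the characteristic flow with respect to the driving field, which bounds $\|g(t)\|$ by $\int_0^t\|w(s)\|\ud s$ with a constant depending on $\sup(1+|v|)^\gamma(|\nabla_x f_0|+|\nabla_v f_0|)$ — precisely the content of hypothesis (\ref{eq:conditionforuniquenesswp}) — while the weight $\gamma$ ensures that the moment differences $\|j_{f_1}-j_{f_2}\|$ and $\|\rho_{f_1}-\rho_{f_2}\|$ are dominated by $\|g\|$. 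Coupling the two inequalities and applying Gr\"onwall's lemma forces $w\equiv0$ and $g\equiv0$ on $[0,T]$, whence uniqueness.
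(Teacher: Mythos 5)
Your proposal is correct in substance and reaches the theorem by the same overall mechanism as the paper --- a decoupled Vlasov/Navier--Stokes fixed point, a Stokes maximal-regularity bootstrap to make the characteristics classical, and a stability-plus-Gr\"onwall argument for uniqueness --- but it organizes the fixed point around the other unknown. The paper (Sections \ref{sec: fixedpointVNS}, \ref{sec: stability VNS} and \ref{sec: uniquenessVNS}, with the control-specific absorption and extension steps simply dropped) iterates on the \emph{distribution function}: $\S_\epsilon$ is a convex, Ascoli-compact subset of $\C^0(Q_T)$ cut out by weighted $L^\infty$ and H\"older bounds on $g$, the fluid is solved for each $g$ (Proposition \ref{proposition:existenceNSg}), and Leray--Schauder is applied to $\V_\epsilon$. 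You iterate on the \emph{velocity field} in a ball $\K$ of $\C^0_tH^1_x\cap L^2_tH^2_x$ with Aubin--Lions compactness. Both routes work. What the paper's choice buys is that the weighted bounds defining $\S_\epsilon$ are exactly the input needed for the fluid estimates and are propagated by the transport, so invariance, continuity and compactness are all checked on one object in the uniform topology. What your choice buys is a more standard parabolic compactness argument; the price is that inside the iteration the field is only in $L^2_tH^2_x$, and since $\nabla u(t)\in H^1(\T^2)\not\subset L^\infty(\T^2)$ the flow is a priori defined only through an Osgood/log-Lipschitz argument, the genuine $\C^1$ regularity of the flow --- hence of $f$ --- being recovered a posteriori at the fixed point by the $W^{2,p}$, $p>2$, bootstrap you describe (the same bootstrap as the paper's use of Theorem \ref{thm:Stokesclassical} with $q=3$). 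You correctly single this out as the delicate point.

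One step should be stated more carefully: in the uniqueness argument you claim the Gr\"onwall constant depends on $\sup(1+|v|)^{\gamma}(|\nabla_xf_0|+|\nabla_vf_0|)$, i.e.\ on hypothesis (\ref{eq:conditionforuniquenesswp}) at time zero, but the Duhamel estimate for $g=f^1-f^2$ integrates $(1+|v|)\,|\nabla_vf^2(s,\cdot)|$ at positive times, so what is actually required is the \emph{propagated} weighted bound on $\nabla_{x,v}f^2$, which is condition (\ref{eq:classeunicite}) and is not an automatic consequence of Definition \ref{definition:strong}. The paper resolves this by showing the constructed solution lies in that class (Corollary \ref{corollary:regularityofg}) and proving uniqueness only within it (Proposition \ref{proposition:uniqueness}, via Proposition \ref{proposition:stabilityH32}). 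Your argument yields the same statement once you insert the propagation step, which follows from the representation formula along the $\C^1$ flow of the bootstrapped field; with that caveat made explicit, the proposal is sound.
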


\begin{remark}
Theorem \ref{thm:controllability} is a local null-controllibility result in the sense that the smallness conditions (\ref{eq:smalldata}) and (\ref{eq:massechampinitiale}) are essential to prove the controllability. On the other hand, condition (\ref{eq:conditionforuniqueness}) is useful to obtain certain stability estimates (see Section \ref{sec: stability VNS}), that are key to prove the controllability of the system and the uniqueness of the corresponding solution.  \par 

Theorem \ref{thm:WP} shows the existence of strong solutions of (\ref{eq:VNShomogeneous}) with small-data in any time. Condition (\ref{eq:conditionforuniquenesswp}) in this case ensures that this strong solution is unique. 
\end{remark}

\subsection{Previous work}
\subsubsection{The controllability of non-linear kinetic equations}

The controllability of non-linear equations, typically described by the coupling of a Vlasov equation and a system for a vector field, originated from the work by O. Glass on the Vlasov-Poisson system in \cite{Glass}. In this work, the idea of combining the return method (see Section \ref{sec: retourVNS} for details) with a Leray-Schauder fixed-point argument involving an absorption procedure was successfully employed for the first time.  \par

This strategy was later extended in \cite{GDHK1} by O. Glass and D. Han-Kwan to the Vlasov-Poisson system under external and Lorentz forces. The authors obtain both local and global exact controllability results in the case of bounded external forces, which requires some new ideas to construct the reference trajectories. Precisely, the authors exploit the fact that the free dynamics and the dynamics under the external force are similar in small time. In the case of Lorentz forces, a precise knowledge of the magnetic field and a geometric control condition in the spirit of \cite{BLR} allow to obtain a local exact controllability result. The functional framework of \cite{Glass, GDHK1} is the one given by the classical solution of the Vlasov-Poisson system, that is, some appropriate H\"{o}lder spaces, according to \cite{UkaiOkabe}. To end up, let us mention that the systems considered in these results present a coupling with a Poisson equation, which is stationary, allowing the use of techniques from Harmonic approximation to construct the reference trajectories. \par

In the case in which the Vlasov equation is coupled with a non-stationary equation, the construction of a reference trajectory has to be achieved in a different way. In this direction, a new strategy has been developed by O. Glass and D. Han-Kwan in \cite{GDHK2} in the context of the Vlasov-Maxwell system. In this case, the authors use some controllability results for the Maxwell system, under the geometric control condition of \cite{BLR}, which allows to construct suitable reference trajectories. In a second step, the Leray-Schauder fixed-point procedure must be reformulated in order to respect some conservation laws. This gives a local controllability result for the distribution function. This results holds in some appropriate Sobolev spaces, according to the functional framework of \cite{Asano, Wollman}. \par 

Furthermore, their strategy allows to obtain a local controllability result for the distribution function under the assumption that $\omega$ contains a hyperplane, using the convergence towards the Vlasov-Poisson system under a certain regime.  \par 

Finally, the methods of \cite{Glass} and \cite{GDHK1} have been applied by the author to a kinetic-fluid system in \cite{VS}, the Vlasov-Stokes system, where a Vlasov equation is coupled with a stationary Stokes system, which can be seen as a simplified version of (\ref{eq:VNS}).

\subsubsection{A short review on the Vlasov-Navier-Stokes system}
\label{sec: reviewVNS}
This system is a model to describe the behaviour of a large cloud of particles interacting with a viscous incompressible fluid. Typically, the coupling is made through two mechanisms. The action of the fluid on the particles is taken into account in the Vlasov equation, where the field appears multiplying the gradient in velocity of the distribution function. Secondly, the action of the particles on the fluid gives rise to a drag force appearing in the Navier-Stokes system as a source term. For more details on the model, we refer to \cite{Boudin}.\par  

The field equation in system (\ref{eq:VNS}), under the influence of the drag force $j-\rho u$, has been rigorously derived as a mean-field limit of a large cloud of particles by L. Desvillettes, F. Golse and V. Ricci in \cite{DesvillettesGolse}, using homogeneisation techniques and under a strong non-collision hypothesis. \par 

The well-posedness of a simplified system has been done in \cite{Hamdache}. The existence of weak solutions to (\ref{eq:VNS}) in the three-dimensional torus has been achieved by L. Boudin, L. Desvillettes, C. Grandmont and A. Moussa in \cite{BoudinDesvillettes}. Other related systems, considering variable density or compressible fluids have been studied by Y.-P. Choi and B. Kwon in \cite{ChoiKwon,Choi}, along with its asymptotic behaviour.  \par

Finally, the question of hydrodynamical limits under certain regimes has been treated by T. Goudon, P. E. Jabin and A. Vasseur in \cite{Goudon1,Goudon2}, considering also the effects of collisions between particles.  

\subsubsection{Obstructions to controllability}

Since Theorem \ref{thm:controllability} is a result of local nature around the steady state $(f,u)=(0,0)$, a first step to achieve its proof could be the use of the linear test (see \cite{Coron}). Following the classical scheme, the controllability of the linearised system around the trivial trajectory and the classical inverse mapping theorem between proper functional spaces would imply the controllability of the nonlinear system (\ref{eq:VNS}). \par 
Indeed, the formal linearised equation around the trajectory $(f,u) = (0,0)$ is
\begin{equation}
\left\{ \begin{array}{ll}
\d_t F + v\cdot \nabla_x F - v\cdot \nabla_v F - 2 F = 1_{\omega}(x)\tilde{G}, \\
F(0,x,v) = f_0(x,v),
\end{array} \right.
\label{eq:linearised}
\end{equation} which is a transport equation with friction. By the method of characteristics, we can give an explicit solution of (\ref{eq:linearised}), which writes
\begin{equation}
F(t,x,v) = e^{2t}f_0(x+ (1-e^{t})v, e^t v) + \int_0^t e^{2(t-s)} (1_{\omega}\tilde{G})(s,x+(1-e^{t-s})v, e^{t-s}v) \ud s.
\label{eq:explicit}
\end{equation} As pointed out in \cite{Glass}, there exist two obstructions for controllability, which are:
\begin{description}
\item[Small velocities] a certain $(x,v)\in \T^2 \times \R^2$ can have a "good direction" with respect to the control region $\omega$, in the sense that $x+(1-e^{-t})v$ meets $\omega$ at some time. However, if $|v|$ is not sufficiently large, the trajectory of the characteristic beginning at this point would possibly not reach $\omega$ before a fixed time. In our case, the effects of friction could enhance this difficulty.
\item[Large velocities] the obstruction concerning large velocities is of geometrical nature. There exist some "bad directions" with respect to $\omega$, in the sense that a characteristic curve beginning at $(x,v)\in \T^2\times \R^2$ would never reach $\omega$, no matter how large $|v|$ is. 
\end{description}  As a result of this, and considering again equation (\ref{eq:explicit}), we deduce that the linearised system is not controllable in general.

\subsubsection{The return method}
\label{sec: retourVNS}
In order to circumvent these difficulties, we use the return method, due to J.-M. Coron.  \par
The idea of this method, in the case under study, is to construct a reference trajectory $(\ovl{f}, \ovl{u})$ starting from $(0,0)$ and coming back to $(0,0)$ at some fixed time in such a way the linearised system around it is controllable. This method, which makes a crucial use of the nonlinearity of the system, allows to avoid the obstructions discussed in the previous section. \par 
We refer to \cite{Coron, GlassBourbaki} for presentations and examples on the return method.

\subsubsection{Strategy of the proof of Theorem \ref{thm:controllability}}

The strategy of this work follows the scheme of \cite{GDHK2}. More precisely, it relies on two ingredients, combined with a final step needed to reach the zero state.
\begin{description}

\item[Step 1] We build a reference solution $(\ovl{f},\ovl{u})$ of system (\ref{eq:VNS}) with a control $\ovl{G}$, located in $\omega$, starting from $(0,0)$ and arriving at $(0,0)$ at a sufficiently large time $T>0$ and such that the characteristics associated to the field $-v + \ovl{u}$ meet $\omega$ before $T>0$. In doing this, the non-linear coupling will be essential, thanks to the use of controllability results for the Navier-Stokes system.

\item[Step 2] We build a solution $(f,u)$ close to $(\ovl{f},\ovl{u})$ starting from $(f_0,u_0)$ and such that 
\begin{equation*} 
f|_{t=T} = 0  \textrm{ outside } \omega, 
\end{equation*} that is, all the particles are confined in $\omega$ at time $T>0$. This can be done by means of a fixed-point argument involving an absorption operator in the control region $\omega$. 

\item[Step 3] We modify the distribution function inside $\omega$ in order to get the zero distribution at some time $T+\tau_1$, i.e, $f|_{t=T+\tau_1}=0$. We then modify the velocity field thanks to the coupling term and a controllability result for the Navier-Stokes system, which yields
\begin{equation*}
(f,u)|_{t=T+\tau_1 + \tau_2} = (0,0),
\end{equation*} for some $\tau_2 > 0$. \par 
\vspace{0.5em}
Let us note that the article centres mainly in the proof of Theorem \ref{thm:controllability}, as the proof of Theorem \ref{thm:WP} follows from some minor modifications. This will be clear from the proofs. 

\end{description}

\subsubsection{Outline of the paper}
In Section \ref{sec: characteristicsVNS}, we recall some results on the characteristic equations that will be important in the sequel. In Section \ref{sec: referenceVNS}, we construct a suitable reference trajectory of system (\ref{eq:VNS}). In Section \ref{sec: fixedpointVNS}, we construct a strong solution of this system, thanks to a fixed-point argument. In Section \ref{sec: stability VNS} we prove some stability estimates for the Navier-Stokes system. In Section \ref{sec: fixedpointrelevantVNS}, we show that this strong solution satisfies the controllability property. In Section \ref{sec: uniquenessVNS}, we prove that this strong solution is unique within a certain class, which concludes the proof of Theorem \ref{thm:controllability}. In Section \ref{sec: commentsVNS} we gather some comments and perspectives. In Appendices \ref{sec: AppendixS} and \ref{sec: AppendixNS} we recall some results on the Stokes and Navier-Stokes systems.

\subsubsection{Notation and functional framework}
\label{sec: notationVNS}
Let $T>0$ and set $Q_T:=[0,T] \times \T^2 \times \R^2$ and $\Omega_T := [0,T] \times \T^2$. If $\sigma \in [0,1]$, $\C^{0,\sigma}_b(\Omega)$ denotes the space of bounded $\sigma-$H\"{o}lder functions in $Q_T$, equipped with the norm
\begin{equation}
\| f \|_{\C^{0,\sigma}_b(\Omega)} := \|f\|_{L^{\infty}(\Omega)} + \sup_{(t,x,v) \not = (t',x',v')} \frac{|f(t,x,v) - f(t',x',v')|}{|(t,x,v) - (t',x',v')|^{\sigma}},
\end{equation} for any $f\in \C^{0,\sigma}_b(Q_T)$. We shall also consider the spaces $\C^{0,\sigma}_b(\Omega_T)$, with analogous definitions. \par 
We will also use the Sobolev spaces $W^{m,p}$, with $m\in \N^*$ and $p\in [1,\infty]$. In the particular case of the flat torus $\T^2$, the Fourier series allow to write
\begin{equation}
f = \sum_{k\in \Z^2} f_k e^{ik \cdot x}, \quad \textrm{ in }L^2(\T^2), \quad \forall f \in L^2(\T^2),
\label{eq:Fourierseries}
\end{equation} with
\begin{equation}
f_k := \int_{\T^2} f(x) e^{i k\cdot x} \ud x, \quad \forall k \in \Z^2.
\label{eq:FouriercoefficientsVNS}
\end{equation} Thus, for any $s>0$, we may write
\begin{align*}
&H^s(\T^2) = \left\{ f\in L^2; \, f=\sum_{k\in\Z^2} f_k e^{ik\cdot x}, \, \ovl{f_k} = f_{-k}, \, \sum_{k\in\Z^2} \left( 1 + |k|^2\right)^s |f_k|^2 < \infty \right\}, \\
& H^s_0(\T^2)  = \left\{ f\in H^s(\T^2); \, \int_{\T^2} f(x)\ud x = 0 \right\},
\end{align*} which allows to equip these spaces, respectively, with the norms
\begin{equation}
\|f \|_{H^s} := \left( \sum_{k\in\Z^2}(1 + |k|^2)^s|f_k|^2  \right)^{\frac{1}{2}}, \quad \|f \|_{H_0^s} := \left( \sum_{k\in\Z^2}|k|^{2s}|f_k|^2  \right)^{\frac{1}{2}},
\label{eq:normsFourier}
\end{equation} with equivalence of norms in the case of $H^s_0$ as a subspace of $H^s$.\par 
In the case of vector fields, we shall use $(W^{m,p}(\T^2))^2$, with the product norm. Let us introduce, as usual, the space of solenoidal vector fields in $L^2$, i.e.,
\begin{equation*}
\mathcal{H}:= \left\{ F \in L^2(\T^2)^2; \, \div_x F = 0 \textrm{ in } \R^2 \right\},
\end{equation*} where the operator $\div_x$ is taken in the distributional sense. Analogously, let us use the following notations, following \cite{Chemin},
\begin{align*}
\V_{\sigma} &:= \left\{ F \in H^1(\T^2)^2; \, \div_x F = 0 \textrm{ in } \R^2 \right\}, \\
\V_{\sigma}' &:= \left\{ F \in H^{-1}(\T^2)^2; \, \div_x F = 0 \textrm{ in } \R^2 \right\}, \\
\V' &:= \left\{ F \in H^{-1}(\T^2)^2\right\}.
\end{align*} We shall also denote by $\S(\R^2)$ the space of Schwartz functions in $\R^2$. \par 

Finally, if $X$ is a Banach space and $p \geq 1$, we will sometimes use, for simplicity, the notations $L^p_tX_x$ or $\C^0_tX_x$ to refer to $L^p(0,T;X)$ or $\C^0([0,T];X)$. \par 
To simplify some computations, we shall use the symbol $\lesssim$ to denote that a multiplicative constant is omitted. \par

\section{Some remarks on the characteristic equations}
\label{sec: characteristicsVNS}
Let be given a fixed $u(t,x)$. Let $s,t \in [0,T]$, $(x,v)\in \T^2\times \R^2$. We denote by $(X(t,s,x,v),V(t,s,x,v))$ the characteristics associated with the field $-v + u(t,x)$, i.e., the solution of the system
\begin{equation}
\left\{ \begin{array}{ll}
\frac{\ud}{\ud t} \begin{pmatrix} X \\ V \end{pmatrix} = \begin{pmatrix}   V(t)  \\ - V(t) + u(t,X) \end{pmatrix}, \\
\begin{pmatrix}   X \\ V \end{pmatrix}_{|t=s} = \begin{pmatrix} x \\ v \end{pmatrix}. 
\end{array} \right.
\label{eq:characteristcsystem}
\end{equation} We observe that if $u\in \C^0([0,T]; \C^1(\T^2;\R^2))$, system (\ref{eq:characteristcsystem}) has a unique solution, thanks to the Cauchy-Lipschitz theorem. Moreover, one has the explicit formulae 
\begin{equation}
\left\{ \begin{array}{ll}
X(t,s,x,v) = x+(1-e^{-t+s})v + \int_s^t \int_s^{t'} e^{\tau - t'} u(\tau, X(\tau,s,x,v)) \ud \tau \ud t', \\
V(t,s,x,v) = e^{-t+s}v + \int_s^t e^{\tau - t} u(\tau, X(\tau,s,x,v))\ud \tau.
\end{array} \right.
\label{eq:characteristics}
\end{equation} 

\begin{remark}
The same result is still valid if one considers vector field in the class $u\in L^1(0,T;\C^{0,1}(\T^2;\R^2))$. This will be important in Section \ref{sec: fixedpointVNS}. For details, see \cite[Remark 1.2.3]{Crippa}. In that case, the associated characteristics are still given by (\ref{eq:characteristics}), well-defined for every $t\in [0,T]$ and differentiable also in time.
\end{remark}

Using the method of characteristics, given an initial datum $f_0 \in \C^0(\T^2 \times \R^2)$, the solution of the transport equation with friction 
\begin{equation}
\left\{ \begin{array}{ll}
\d_t f + v\cdot \nabla_x f + \div_v\left[(u-v)f  \right] = 0, & (t,x,v)\in (0,T)\times \T^2 \times \R^2, \\
f(0,x,v) = f_0(x,v), & (x,v) \in \T^2 \times \R^2, 
\end{array} \right.
\end{equation} has the explicit solution
\begin{equation}
f(t,x,v) = e^{2t}f_0((X,V)(0,t,x,v)),
\label{eq:explicitsolutionVNS}
\end{equation} where $(X,V)$ are given by (\ref{eq:characteristics}). \par 
The proof of the following result, under the hypothesis that the field belongs to $\C^0_t \C^1_x$, can be found in \cite[Lemma 1, Section 3]{VS}. The adaptation to the case in which the field belongs to $L^1_t \C^{0,1}_x$ is straightforward.

\begin{lemma}
Let $u \in L^1(0,T; \C^{0,1} (\T^2;\R^2))$ Then, the characteristics associated to the field $-v + u$ satisfy that for some $C = C(T,\|u\|_{L^1_t\C^{0,1}_x})>0$,
\begin{eqnarray}
&& |(X,V)(t,s,x,v) - (X,V)(t',s',x',v')| \nonumber \\
&& \quad \quad \quad \quad \quad \quad \leq C (1+|v|)|(t,s,x,v) - (t',s',x',v')|, \nonumber 
\end{eqnarray} whenever $(t,s,x,v),(t',s',x',v') \in [0,T] \times \T^2 \times \R^2$, with $|v-v'|<1$.
\label{lemma:Gronwall}
\end{lemma}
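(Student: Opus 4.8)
The plan is to reduce everything to Gronwall's inequality, the single extra ingredient being an a priori bound on the velocity component that produces the weight $(1+|v|)$. From the explicit formula for $V$ in (\ref{eq:characteristics}) one reads off
\[
|V(\tau,s,x,v)| \le e^{-\tau+s}|v| + \int_s^\tau e^{\sigma-\tau}|u(\sigma,X(\sigma,s,x,v))|\ud\sigma \le |v| + \|u\|_{L^1_t\C^{0,1}_x} \lesssim (1+|v|),
\]
uniformly in $\tau,s\in[0,T]$, and likewise $|X(\tau,s,x,v)-x| \lesssim (1+|v|)$. I would then prove the estimate by perturbing one argument at a time — first $(x,v)$, then $t$, then $s$ — and combining by the triangle inequality; the hypothesis $|v-v'|<1$ serves only to make the weights $(1+|v|)$ and $(1+|v'|)$ comparable, so the final constant can be written with $(1+|v|)$ alone.

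For the dependence on $(x,v)$ at fixed $(t,s)$, set $\delta X = X(\cdot,s,x,v)-X(\cdot,s,x',v')$ and $\delta V = V(\cdot,s,x,v)-V(\cdot,s,x',v')$. These solve $\frac{\ud}{\ud\tau}\delta X = \delta V$ and $\frac{\ud}{\ud\tau}\delta V = -\delta V + u(\tau,X(\tau,s,x,v)) - u(\tau,X(\tau,s,x',v'))$, and Lipschitz continuity of $u$ in $x$ gives $|u(\tau,X)-u(\tau,X')| \le \|u(\tau)\|_{\C^{0,1}}|\delta X(\tau)|$. Hence $\Phi := |\delta X|+|\delta V|$ obeys $\frac{\ud}{\ud\tau}\Phi \le (2+\|u(\tau)\|_{\C^{0,1}})\Phi$, and the integral form of Gronwall's inequality yields
\[
|(X,V)(t,s,x,v) - (X,V)(t,s,x',v')| \le e^{2T + \|u\|_{L^1_t\C^{0,1}_x}}\big(|x-x'| + |v-v'|\big).
\]
Crucially this step carries no velocity weight, which is what makes the composition in the next step close.

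For the time endpoints, the dependence on $t$ is immediate from the ODE: $|X(t,s,x,v)-X(t',s,x,v)| \le \big|\int_{t'}^t |V(\tau,s,x,v)|\ud\tau\big| \lesssim (1+|v|)|t-t'|$ by the a priori bound, and similarly $V(t)-V(t') = \int_{t'}^t(-V + u(\tau,X))\ud\tau$ is controlled by $(1+|v|)|t-t'|$ (using boundedness of $u$ in time, as in the $\C^0_t\C^1_x$ setting of \cite{VS}). For the starting time $s$ I would exploit the flow property $(X,V)(t,s',x,v) = (X,V)\big(t,s,(X,V)(s,s',x,v)\big)$, writing
\[
(X,V)(t,s',x,v) - (X,V)(t,s,x,v) = (X,V)\big(t,s,(X,V)(s,s',x,v)\big) - (X,V)(t,s,(x,v)),
\]
so that the weight-free initial-data estimate reduces this to $\lesssim |(X,V)(s,s',x,v) - (x,v)|$, which is the displacement of the flow over $[s',s]$ and hence $\lesssim (1+|v|)|s-s'|$ by the $t$-endpoint estimate.

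Combining the three estimates by the triangle inequality, and using $|v-v'|<1$ to absorb $(1+|v'|)$ into $2(1+|v|)$, gives the claim with a constant depending only on $T$ and $\|u\|_{L^1_t\C^{0,1}_x}$. The step I expect to be the main obstacle is the dependence on the initial time $s$, since changing $s$ reparametrizes the entire trajectory rather than merely perturbing its endpoint; the flow (semigroup) property is what makes it tractable, and the fact that the initial-data estimate of the second step comes with \emph{no} velocity weight is essential for this composition to close without degrading the power of $(1+|v|)$.
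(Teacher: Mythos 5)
The paper does not actually prove this lemma: it defers to \cite[Lemma 1, Section 3]{VS} for the case $u\in\C^0_t\C^1_x$ and asserts that the adaptation to $L^1_t\C^{0,1}_x$ is straightforward. Your write-up is, in substance, the standard argument that reference carries out: Gronwall in the initial data (which is indeed weight-free), the a priori bound $|V|\lesssim(1+|v|)$ from the explicit formula (\ref{eq:characteristics}) to handle the endpoint time, and the flow property to reduce the dependence on $s$ to the two previous cases. The decomposition, the observation that the initial-data step must carry no velocity weight for the composition in the $s$-step to close, and the role of $|v-v'|<1$ are all correctly identified, so at the level of the space and velocity variables your proof is complete and matches the intended one.

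There is, however, one point you flag in passing that deserves to be treated as a genuine gap rather than a parenthesis. The Lipschitz-in-$t$ estimate for the $V$ component requires controlling $\int_{t'}^{t}\|u(\tau)\|_{L^\infty_x}\ud\tau$ by a constant times $|t-t'|$, and under the stated hypothesis $u\in L^1(0,T;\C^{0,1})$ this fails: take $u(\tau,x)=\tau^{-1/2}e_1$, which lies in $L^1_t\C^{0,1}_x$ but for which $t\mapsto V(t,0,x,0)$ behaves like $2\sqrt{t}$ near $t=0$ and is not Lipschitz. So the conclusion in the time variables cannot hold with a constant depending only on $T$ and $\|u\|_{L^1_t\C^{0,1}_x}$; one needs $\|u(\cdot)\|_{L^\infty_x}\in L^\infty(0,T)$ (as in the $\C^0_t\C^1_x$ setting of \cite{VS}), or one must weaken the $t$- and $s$-dependence to H\"older continuity of exponent $1-\frac1p$ when $u\in L^p_t\C^{0,1}_x$. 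Your proposal silently imports the boundedness-in-time assumption to close this step; you should either add it to the hypotheses or restrict the claimed Lipschitz bound to the $(x,v)$ variables. To be clear, this defect is inherited from the statement as the paper gives it --- the ``straightforward adaptation'' to $L^1_t\C^{0,1}_x$ is not straightforward in the time variables --- and in the paper's application the fields $u^g$ actually lie in $L^2_t\C^1_x$, which yields only H\"older-$\frac12$ regularity in $t$; the part of the lemma genuinely needed there is the weight-free Lipschitz dependence on the initial data, which your Gronwall step establishes correctly under the stated hypothesis.
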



\section{Construction of a reference trajectory}
\label{sec: referenceVNS}

The aim of this section is to construct a reference solution $(\ovl{f},\ovl{u}) $ of system (\ref{eq:VNS}), according to the return method, in such a way the characteristics associated to $\ovl{u}$, say $(\ovl{X},\ovl{V})$, verify the following property
\begin{align}
& \forall (x,v) \in \T^2 \times \R^2, \, \exists t\in \left[ \frac{T}{12}, \frac{11T}{12} \right] \textrm{ such that } \nonumber \\
&     \ovl{X}(t,0,x,v) \in H, \textrm{ with } |\ovl{V}(t,0,x,v) \cdot n_H| \geq 5.  \label{eq:referencecharacteristics}
\end{align} 

\begin{proposition}
Let $\omega \subset \T^2$ satisfy the strip assumption of Definition \ref{definition:strip}. There exists $T_0>0$ such that for any $T\geq T_0$, there exists a reference solution $(\ovl{f},\ovl{u})$ of system (\ref{eq:VNS}) such that 
\begin{align}
& \ovl{f} \in \C^{\infty}([0,T]\times \T^2; \S(\R^2)), \label{eq:regularityofthereferencedistribution} \\
& \ovl{u} \in \C^{\infty}([0,T]\times \T^2; \R^2), \label{eq:regularityofthereferencefield} \\
& (\ovl{f},\ovl{u})|_{t=0} = (0,0), \quad (\ovl{f},\ovl{u})|_{t=T} = (0,0), \label{eq:referencezeroatendandbeginning} \\
& supp(\ovl{f}) \subset (0,T) \times \omega \times \R^2,
\end{align} and such that the characteristics associated to $\ovl{u}$ satisfy (\ref{eq:referencecharacteristics}). 
\label{proposition:referencetrajectory}
\end{proposition}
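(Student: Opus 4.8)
The plan is to build the reference trajectory $(\ovl{f},\ovl{u})$ by first constructing the fluid field $\ovl{u}$ and then transporting a suitable initial bump of particles along its characteristics, verifying the geometric condition \eqref{eq:referencecharacteristics} at the end. The key idea of the return method is that $\ovl{u}$ should impart a strong transversal velocity to every phase-space point so that, despite the friction $-v$, all characteristics are pushed across the hyperplane $H$ with speed at least $5$ in the normal direction at some intermediate time. First I would fix coordinates so that $n_H$ is the second canonical vector and $H=\{x\cdot n_H=0\}$, and design $\ovl{u}$ to be supported in time inside $(0,T)$ (so that \eqref{eq:referencezeroatendandbeginning} holds for the field, which vanishes at $t=0$ and $t=T$), spatially smooth and divergence-free, and essentially a large constant drift $c\, n_H$ on the bulk of the torus during a central time window, tapered off near the endpoints by a smooth cutoff $\chi(t)$. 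The divergence-free constraint on $\T^2$ is easy to satisfy since a spatially constant field is automatically solenoidal; the only subtlety is that a genuinely constant-in-space field cannot be the controlled Navier–Stokes solution unless the source term and pressure are adjusted, so one chooses $p$ and reads off the required forcing $j_{\ovl f}-\rho_{\ovl f}\ovl u$ a posteriori, or alternatively invokes the Navier–Stokes controllability machinery from the appendices to realise such a drift with a control localised in $\omega$.

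Next I would analyse the characteristics \eqref{eq:characteristics} for this field. With $\ovl{u}(t,x)=\chi(t)\,c\,n_H$ (approximately constant in space over the relevant region), the velocity component along $n_H$ solves $\dot V_n=-V_n+\chi(t)c$, giving $V_n(t)=e^{-t}v_n+c\int_0^t e^{\tau-t}\chi(\tau)\ud\tau$. By choosing $T_0$ large and $\chi\equiv 1$ on, say, $[T/24,23T/24]$, the integral term approaches $c$ on the central window, so for $c$ chosen sufficiently large (e.g.\ $c\geq 7$) one has $|V_n(t)|\geq 5$ throughout the interval $[T/12,11T/12]$, uniformly in the initial velocity $v$ once the transient $e^{-t}v_n$ has decayed. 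This handles the velocity half of \eqref{eq:referencecharacteristics}. For the position half, I would note that the normal coordinate $X_n(t,0,x,v)$ moves with speed bounded below by the constant drift on the central window, so over a time interval of length comparable to $T$ it sweeps through a displacement exceeding the diameter $1$ of the torus in the $n_H$ direction; by continuity and the periodicity of $H$ under $s$, the trajectory must cross $H$ at some $t\in[T/12,11T/12]$, and one arranges the timing so that the crossing occurs while $|V_n|\geq 5$.

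Then I would construct $\ovl{f}$ by transporting backward a smooth compactly supported profile: choose a smooth $\ovl f$ that is a solution of the Vlasov equation with the already-fixed $\ovl u$, supported in $(0,T)\times\omega\times\R^2$, and Schwartz in $v$. Concretely one takes a bump that is nonzero only for times strictly inside $(0,T)$ — for instance seeding mass from the control $\ovl G$ at an intermediate time inside $\omega$ and absorbing it again before $T$ — so that $\ovl f|_{t=0}=\ovl f|_{t=T}=0$, giving \eqref{eq:referencezeroatendandbeginning} and the support condition simultaneously. Since $\ovl u$ and $\ovl f$ are both smooth and $\ovl f$ is Schwartz in velocity, the regularity claims \eqref{eq:regularityofthereferencedistribution}–\eqref{eq:regularityofthereferencefield} follow, and the coupling term $j_{\ovl f}-\rho_{\ovl f}\ovl u$ is smooth; defining $\ovl G:=1_\omega^{-1}(\d_t\ovl f+v\cdot\nabla_x\ovl f+\div_v[(\ovl u-v)\ovl f])$ on $\supp\ovl f\subset\omega$ makes $(\ovl f,\ovl u)$ an exact solution of \eqref{eq:VNS}.

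The main obstacle I anticipate is the simultaneous realisation of a large space-transversal drift \emph{and} the Navier–Stokes dynamics with a control supported only in $\omega$: a literally constant field evades the divergence and transport structure but is not self-consistently a controlled Navier–Stokes solution, so the delicate point is to produce a field that is close enough to the constant drift on the bulk (to force \eqref{eq:referencecharacteristics}) while being an honest solution driven by a localised control, which requires the Navier–Stokes controllability results recalled in Appendix~\ref{sec: AppendixNS}. A secondary difficulty is ensuring the crossing of $H$ happens precisely when $|\ovl V\cdot n_H|\geq 5$ uniformly over \emph{all} $(x,v)$, including very large $|v|$, where the friction must be given enough time (hence the lower bound $T_0$) to damp the initial velocity before the drift dominates.
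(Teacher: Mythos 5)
Your high-level strategy is the same as the paper's (return method via a constant drift along $n_H$, switched on and off inside $(0,T)$ and realised as a controlled Navier--Stokes trajectory, with $\ovl{f}$ supported in $\omega$), but two steps of your proposal would fail as written.

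The first gap is the order of construction, which breaks the fluid equation. You build $\ovl{u}=\chi(t)\,c\,n_H$ first, then obtain $\ovl{f}$ by transporting a bump along its characteristics, and finally define $\ovl{G}$ to repair the Vlasov equation. Nothing then guarantees the Navier--Stokes equation in (\ref{eq:VNS}): sustaining the tapered field requires the force $\chi'(t)\,c\,n_H$, which is spatially constant and therefore cannot equal $j_{\ovl{f}}-\rho_{\ovl{f}}\ovl{u}$ for an $\ovl{f}$ supported in $\omega\times\R^2$. Since the control $G$ acts only in the Vlasov equation, the only handle on the fluid is the drag force, so $\ovl{f}$ must be \emph{designed} so that its moments are exactly a Navier--Stokes control localised in $\omega$ --- it cannot be read off a posteriori, nor obtained by solving a transport equation. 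The paper's resolution, which you gesture at but do not carry out, is: (i) keep the field equal to the \emph{stationary} solution on a long middle interval (no force needed there); (ii) use the Coron--Fursikov global exact controllability theorem (Theorem \ref{thm:CoronFursikov}, stated in Section \ref{sec: referenceVNS} --- Appendix \ref{sec: AppendixNS} contains only well-posedness and stability results, not controllability) to join $0$ to the constant field and back with a smooth control $w$ supported in $(0,\tau)\times\omega$; and (iii) set $\ovl{f}:=(\mathcal{Z}_1,\mathcal{Z}_2)(v)\cdot w(t,x)$ with the moment normalisations (\ref{eq:Z1integral})--(\ref{eq:Z2integral}), so that $j_{\ovl{f}}=w$, $\rho_{\ovl{f}}=0$, and the drag force equals $w$ identically. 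This is the heart of the construction, not a detail to be deferred.

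The second gap is the claim that $|V_n(t)|\geq 5$ on the central window ``uniformly in the initial velocity $v$ once the transient $e^{-t}v_n$ has decayed.'' Since $v$ ranges over all of $\R^2$, no fixed $T_0$ damps the transient uniformly, and for every $T$ there are data (e.g. $v\cdot n_H=-c\,e^{t^*}$ with $t^*$ in the window) for which $V_n$ vanishes at a time in $[T/12,11T/12]$; your closing remark that ``the friction must be given enough time to damp the initial velocity before the drift dominates'' is precisely the mechanism that cannot work for large $|v|$. The argument must be split into two regimes, as in the paper: for $|v\cdot n_H|\geq\Lambda_0$ with $\Lambda_0$ as in (\ref{eq:choiceofLambda}), the particle crosses $H$ during the initial phase essentially by free transport with friction, and at that crossing $|\ovl{V}\cdot n_H|=e^{-t}|v\cdot n_H|\geq e^{-T_1}\Lambda_0\geq 5$; only for $|v\cdot n_H|<\Lambda_0$ does one wait for the transient to become negligible and let the drift produce both the crossing and the speed. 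One point in your favour: you keep the damping factor $e^{\tau-t}$ in the Duhamel formula (\ref{eq:characteristics}) and correctly conclude that the normal velocity saturates at the drift amplitude, whence your choice $c\geq 7>5$; the paper works with the unit drift $\hat{y}=n_H$ and in (\ref{eq:bassesvitesses}) the factor $e^{\tau-s}$ is dropped, which is what makes the velocity appear to grow linearly there --- taking the stationary solution with amplitude larger than $5$, as you do, is the natural way to make that step robust.
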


\subsection{Global exact controllability of the Navier-Stokes system}

We recall a result due to Jean-Michel Coron and Andrei Fursikov, that guarantees the global exact controllability of the Navier-Stokes system on a surface without boundary (see \cite{CoronFursikov}). \par 
More precisely let $(M,g)$ be a connected, two-dimensional, orientable, compact, smooth Riemannian manifold without boundary. Let us denote by $T_x M$, as usual, the tangent space to $M$ at $x\in M$ and let $TM = \bigcup_{x\in M} T_x M$. For the definition of the differential operators $\div$, $\Delta$ and $\nabla \cdot$ on the manifold $M$ used below, we refer to \cite[Section 2]{CoronFursikov}. \par 
Let us choose and fix a particular solution of the Navier-Stokes system in $M$, i.e., let $\hat{y} \in \C^{\infty}\left( [0,\infty) \times M; TM \right)$ be such that
\begin{align}
& \hat{y}(t,x) \in T_x M, \quad \forall (t,x) \in [0,\infty) \times M, \\
& \div_x \hat{y}(t,x) = 0, \quad \forall (t,x) \in [0,\infty) \times M,
\end{align} and such that $\exists \hat{p} \in \C^{\infty}([0,\infty) \times M;\R)$ for which
\begin{equation*}
\partial_t \hat{y} - \Delta \hat{y} + \nabla_{\hat{y}}\cdot \hat{y} + \nabla \hat{p} = 0, \quad \textrm{in } (0,\infty) \times M.
\end{equation*} Then, we have the following controllability result.

\begin{thm}[Coron-Fursikov \cite{CoronFursikov}] Let $\tau > 0$, let $M_0 \subset M$ be an arbitrary open set and let $y_0 \in \C^{\infty}(M;TM)$ satisfying $y_0(x) \in T_x M$, for all $x\in M$, and that  $\div_x y_0 = 0$. Then, there exists a control $w \in \C^{\infty}(M \times [0,\infty); TM)$ verifying 
\begin{equation}
\supp w \subset (0,\tau) \times M_0
\label{eq:supportofthecontrol}
\end{equation} and such that the solution of the system
\begin{equation*}
\left\{  \begin{array}{ll}
\partial_t y - \Delta y + \nabla_y \cdot y + \nabla p = w, & (t,x) \in (0,\infty) \times M, \\
\div_x y(t,x) = 0, & (t,x) \in (0,\infty) \times M, \\
y_{|t=0} = y_0(x), & x \in M,
\end{array} \right.
\end{equation*} satisfies 
\begin{equation*}
y_{|t=\tau} = \hat{y}_{|t=\tau}.
\end{equation*}
\label{thm:CoronFursikov}
\end{thm}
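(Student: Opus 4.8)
The plan is to prove the result by combining, in the spirit of Coron's return method, a \emph{global approximate controllability} step that exploits the nonlinearity with a \emph{local exact controllability} step to the target trajectory. Concretely, I would split the interval $(0,\tau)$ into $(0,\tau_1)$ and $(\tau_1,\tau)$: on the first subinterval I steer $y_0$ into an arbitrarily small neighborhood (in a suitable Sobolev topology) of $\hat{y}(\tau_1)$, and on the second subinterval I reach $\hat{y}(\tau)$ exactly. Each of the two controls would be smoothly cut off in time and localized in $M_0$, so that the glued control $w$ is smooth and satisfies the support condition \eqref{eq:supportofthecontrol}.

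For the second step I would linearize around $\hat{y}$. Writing $y = \hat{y} + z$, the difference $z$ solves an Oseen-type system (a heat operator plus a transport/coupling term involving $\hat{y}$, subject to $\div_x z = 0$) together with a quadratic remainder coming from the convection term $\nabla_y\cdot y$. The core of this step is null controllability of the linearized Oseen system, which I would derive from a global Carleman estimate for its adjoint; integrating that estimate yields an observability inequality, equivalent by duality to null controllability with a control supported in $M_0$. To pass to the genuine nonlinear problem I would exploit the parabolic smoothing of Navier--Stokes on the compact $M$ together with a Fursikov--Imanuvilov weighted-functional argument (or an inverse mapping theorem between well-chosen weighted spaces) to absorb the quadratic remainder, obtaining exact controllability to $\hat{y}(\tau)$ \emph{provided} the state at time $\tau_1$ is close enough to $\hat{y}(\tau_1)$.

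The genuinely nonlinear and most delicate step is the first one, and this is where I expect the main obstacle. Here I would use the return method with large-amplitude controls via a scaling: introducing a small parameter $\eps$ and rescaling time on $(0,\tau_1)$, the viscous term becomes multiplied by $\eps$, so to leading order the dynamics is governed by the incompressible Euler equation on $M$. I would then invoke Coron's global controllability of the $2$-D Euler equations on the manifold to steer the inviscid model from $y_0$ into a neighborhood of $\hat{y}(\tau_1)$, and transfer this back to Navier--Stokes by quantitatively estimating the difference between the viscous and inviscid solutions as $\eps \to 0$. The hypothesis that $M$ has \emph{no boundary} is essential precisely here: it removes the boundary layers that would otherwise obstruct the inviscid limit, so that the approximation is uniform and the approximate controllability of Euler transfers to genuine approximate controllability of Navier--Stokes.

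Finally I would assemble the two controls, inserting smooth temporal cut-offs near $t=0$, $t=\tau_1$ and $t=\tau$ so that the phases match continuously at $t=\tau_1$ and the total control vanishes at the endpoints, while remaining supported in $(0,\tau)\times M_0$. The smoothing effect of the flow on the compact manifold upgrades the regularity of the controlled trajectory, which is what permits the local step to be run in a framework where $\hat{y}(\tau)$ is attained exactly rather than only approximately. To summarize, the local step, while technical, is by now standard once the Carleman estimate for the Oseen adjoint is in hand; the real difficulty lies in the global approximate-controllability step and in the quantitative inviscid-limit estimates on which it rests.
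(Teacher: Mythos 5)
This statement is not proved in the paper at all: it is Theorem \ref{thm:CoronFursikov}, quoted verbatim as a black box from the cited work of Coron and Fursikov, and the article only \emph{uses} it (in Steps 2 and 4 of the construction of the reference trajectory and in Step 3 of the controllability proof). So there is no in-paper argument to compare yours against. That said, your outline does reproduce the architecture of the proof in the cited reference: a global approximate-controllability phase driven by the return method and the controllability of the $2$-D Euler equation in the vanishing-viscosity regime (where the absence of boundary is exactly what kills the boundary layers), followed by a local exact-controllability-to-trajectories phase based on the Fursikov--Imanuvilov Carleman machinery for the linearized (Oseen) system, with the two controls glued by temporal cut-offs so that \eqref{eq:supportofthecontrol} holds. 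As submitted, however, it is a strategy rather than a proof: the two load-bearing ingredients --- the quantitative inviscid-limit estimate that transfers Euler controllability to Navier--Stokes, and the global Carleman/observability inequality for the adjoint Oseen system on a compact Riemannian surface --- are invoked by name but not established, and these are precisely where all the work lies. For the purposes of this paper the correct move is the one the author makes: cite the result and do not reprove it.
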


This result guarantees that, given a fixed trajectory of the Navier-Stokes system, namely $\hat{y}$, given a time $\tau$, given an arbitrary initial state $y_0$ and given any open set $M_0 \subset M$, we can find a suitable force $w$, acting on $M_0$, allowing to pass from the initial state $y_0$ to the solution $\hat{y}$ in time $\tau$. We shall exploit Theorem \ref{thm:CoronFursikov} to construct a reference trajectory in the case of the torus.   

\subsection{Proof of Proposition \ref{proposition:referencetrajectory}}

Let us consider some $T_1,T_2,T_3,T_4$ with $T_3$ large enough, to be chosen later on, and
\begin{equation}
0 := T_0 < T_1 < T_2 < T_3 < T_4.
\label{eq:(0,T)}
\end{equation} We shall work separately on each interval $(T_i,T_{i+1})$, for $i=0,\dots, 3$. Let us fix from now on the values $0 < T_1 < T_2$.

\vspace{0.5em}
\textit{Step 1.} The reference solution in $[0,T_1]$. \par 
We set $(f_1,u_1) = (0,0)$, which trivially solves (\ref{eq:VNS}).

\vspace{0.5em}
\textit{Step 2.} The reference solution in $[T_1,T_2]$.\par 
At this step we use Theorem \ref{thm:CoronFursikov} to modify the reference trajectory in a convenient way. \par 
Indeed, since $\omega \subset \T^2$ satisfies the strip assumption, let us consider the constant vector field $\hat{y}(t,x) \equiv n_H$, where $n_H$ is given by Definition \ref{definition:strip}. Thus, $\hat{y}$ is a stationary solution of the Navier-Stokes system in $\T^2$. Let us apply Theorem \ref{thm:CoronFursikov} with
\begin{equation*}
\hat{y} = n_H, \quad M = \T^2, \quad M_0 = \omega, \quad \tau:= T_2 - T_1, \quad y_0 \equiv 0.
\end{equation*} This yields a control 
\begin{equation}
w_2 \in \C^{\infty}([0,T_2-T_1] \times \T^2;\R^2)
\label{eq:w2regularity}
\end{equation} with 
\begin{equation}
\supp w_2 \subset (0,T_2-T_1) \times \omega
\label{eq:supportw2}
\end{equation} and such that the corresponding solution to the Navier-Stokes system under this force, say $u_2$, satisfies
\begin{equation*}
u_2|_{t=T_2 - T_1} = n_H.
\end{equation*} To construct the associated distribution function, let us consider $\mathcal{Z}_1,\mathcal{Z}_2 \in \S(\R^2)$ such that
\begin{align}
\int_{\R^2} v_1 \mathcal{Z}_1 \ud v = 1, \quad \int_{\R^2} v_2 \mathcal{Z}_1 \ud v = 0, \quad \int_{\R^2} \mathcal{Z}_1 \ud v = 0, \label{eq:Z1integral} \\
\int_{\R^2} v_1 \mathcal{Z}_2 \ud v = 0, \quad \int_{\R^2} v_2 \mathcal{Z}_2 \ud v = 1, \quad \int_{\R^2} \mathcal{Z}_2 \ud v = 0. \label{eq:Z2integral}
\end{align} Then, define
\begin{equation}
f_2(t,x,v):= (\mathcal{Z}_1,\mathcal{Z}_2)(v)\cdot w_2(t,x), \quad \forall (t,x,v) \in (0,T_2 - T_1) \times \T^2 \times \R^2,
\end{equation} which gives
\begin{align}
f_2 \in \C^{\infty}([0,T_2-T_1]\times \T^2; \S(\R^2)), \label{eq:f2regularity} \\
j_{f_2}(t,x) = w_2(t,x), \quad \rho_{f_2}(t,x) = 0. \nonumber
\end{align} Consequently,
\begin{equation}
\partial_t u_2 + \left(u_2 \cdot \nabla \right)u_2 - \Delta u_2  + \nabla p_2 = j_{f_2} - \rho_{f_2}u_2, \quad \textrm{ in }(0,T_2-T_1) \times \T^2,
\label{eq:NS2}
\end{equation} and using (\ref{eq:supportw2}),
\begin{equation}
\partial_t f_2 + v\cdot \nabla_x f_2 + \div_v \left[ (u_2 - v) f_2  \right] = 0, \quad \textrm{in } (0,T_2-T_1) \times \left( \T^2 \setminus \omega \right) \times \R^2.
\label{eq:Vlasov2}
\end{equation}

\vspace{0.5em}
\textit{Step 3.} The reference solution in $[T_2,T_3]$. \par 
Let us choose $T_3>T_2$ large enough, to be chosen later on. During the interval $[T_2,T_3]$, we use the stationary solution $n_H$ to accelerate all the particles in the direction of $n_H$, as explained in detail in Step 5. \par

\vspace{0.5em}
\textit{Step 4.} The reference solution in $[T_3,T_4]$. \par 
Working as in Step 2, we use again Theorem \ref{thm:CoronFursikov} to steer the Navier-Stokes system from $n_H$ to $0$ in time $T_4-T_3$. This provides a control 
\begin{equation*}
w_4 \in \C^{\infty}([0,T_4-T_3]\times \T^2; \R^2)
\end{equation*} and
\begin{equation*}
\supp w_4 \subset (0,T_4-T_3) \times \omega,
\end{equation*} such that the corresponding solution of the Navier-Stokes system under the force $w_4$, say $u_4$, satisfies 
\begin{align}
u_4 \in \C^{\infty}([0,T_4-T_3] \times \T^2;\R^2), \nonumber \\
u_4|_{t=0} = n_H, \quad u_4|_{t= T_4 - T_3} = 0. \label{eq:u4finale}
\end{align} Thus, choosing $\mathcal{Z}_1$ and $\mathcal{Z}_2$ as in (\ref{eq:Z1integral}) and (\ref{eq:Z2integral}), we define
\begin{equation}
f_4(t,x,v):= (\mathcal{Z}_1,\mathcal{Z}_2)(v)\cdot w_4(t,x), \quad \forall (t,x,v) \in (0,T_4 - T_3) \times \T^2 \times \R^2.
\end{equation} By the same arguments as before, this yields
\begin{align}
\partial_t u_4 + \left(u_4 \cdot \nabla \right)u_4 - \Delta u_4  + \nabla p_4 = j_{f_4} - \rho_{f_4}u_4, \quad \textrm{in }(0,T_4-T_3) \times \T^2, \label{eq:NS4} \\
\partial_t f_4 + v\cdot \nabla_x f_4 + \div_v \left[ (u_4 - v) f_4  \right] = 0, \quad \textrm{in } (0,T_4-T_3) \times \left( \T^2 \setminus \omega \right) \times \R^2. \label{eq:Vlasov4}
\end{align} 

\vspace{0.5em}
\textit{Step 5. Conclusion.} \par 
Let us introduce the following parameters
\begin{equation}
\Lambda_0 := \max \left\{ \frac{d_0}{1-e^{-\frac{T_1}{2}}} , 5e^{T_1} \right\}, \quad d_0:= \max_{y\in \T^2} d(y,H),
\label{eq:choiceofLambda}
\end{equation} where $d_0$ is finite thanks to the compactness of $\T^2$ and the fact that $H$ is closed. Let us choose next $T_3> T_2$ large enough so that
\begin{align}
& \frac{1}{8}(T_3 -3T_2)^2 - T_3 \|u_2\|_{L^1(0,T_2 - T_1; L^{\infty}(\T^2))} \geq  \Lambda_0  + d_0, \label{eq:choiceofT3} \\
& T_3 \geq 3T_2 + 2 \left(  \Lambda_0  + \| u_2 \|_{L^1(0,T_2 - T_1; L^{\infty}(\T^2))} \right) + 10,  \label{eq:choiceofT3pourlesvitesses}
\end{align} where $u_2$ is defined as in Step 2. \par 
Next, according to (\ref{eq:(0,T)}), let us define the vector field
\begin{equation}
\ovl{u}(t) := \left\{  \begin{array}{ll}
0, & t\in [0,T_1], \\
u_2( t - T_1), & t\in [T_1,T_2], \\
n_H, & t\in [T_2,T_3], \\
u_4(t-T_3), & t\in [T_3,T_4],
\end{array} \right.
\label{eq:ubarre}
\end{equation} and the distribution function
\begin{equation*}
\ovl{f}(t) := \left\{  \begin{array}{ll}
0, & t\in [0,T_1], \\
f_2( t - T_1), & t\in [T_1,T_2], \\
0, & t\in [T_2,T_3], \\
f_4(t-T_3), & t\in [T_3,T_4].
\end{array} \right.
\end{equation*} Thanks to the previous definitions and using (\ref{eq:NS2}), (\ref{eq:Vlasov2}), (\ref{eq:NS4}) and (\ref{eq:Vlasov4}), we have, for $T = T_4$,  
\begin{align*}
\partial_t \ovl{u} + \left(\ovl{u} \cdot \nabla \right)\ovl{u} - \Delta \ovl{u}  + \nabla \ovl{p} = j_{\ovl{f}} - \rho_{\ovl{f}}\ovl{u}, \quad \textrm{in }(0,T) \times \T^2, \\
\partial_t \ovl{f} + v\cdot \nabla_x \ovl{f} + \div_v \left[ (\ovl{u} - v) \ovl{f}  \right] = 0, \quad \textrm{in } (0,T) \times \left( \T^2 \setminus \omega \right) \times \R^2. 
\end{align*} Let us now prove (\ref{eq:referencecharacteristics}). \par 
Let $(x,v) \in \T^2 \times \R^2$. We shall distinguish two cases.
\begin{description}
\item[Case 1. (High velocities)] Let us assume that $|v\cdot n_H | \geq \Lambda_0$. \par 
Thus, for any $s\in (\frac{T_1}{2},T_1)$, we have
\begin{align*}
| \left( \ovl{X}(s,0,x,v) - x \right) \cdot n_H | & = |(1-e^{-s})v\cdot n_H | \\
& \geq (1-e^{-\frac{T_1}{2}})|v\cdot n_H| > d_0,
\end{align*} thanks to the choice (\ref{eq:choiceofLambda}). Thus, thanks to the intermediate value theorem, there exists $t \in (0,T_1)$ such that $\ovl{X}(t,0,x,v) \in H$. Moreover,
\begin{equation*}
|\ovl{V}(t,0,x,v)\cdot n_H | = |e^{-t}v\cdot n_H | > e^{-T_1}\Lambda_0,
\end{equation*} which shows (\ref{eq:referencecharacteristics}) in this case.

\item[Case 2. (Low velocities)] Let us assume that $|v\cdot n_H| < \Lambda_0$. \par 
Taking $s \in (T_2, T_3)$, we can write, thanks to (\ref{eq:characteristics}) and (\ref{eq:ubarre}), 
\begin{equation}
\ovl{V}(s,0,x,v) = e^{-s}v + \int_0^{T_2-T_1} u_2(\tau, \ovl{X}(\tau)) \ud \tau + \int_{T_2}^s \ud \tau n_H,
\label{eq:bassesvitesses}
\end{equation} which, combined with (\ref{eq:choiceofT3}) entails 
\begin{align*}
& \left| (\ovl{X}(s,0,x,v) - x ) \cdot n_H \right| \\ 
& \quad \quad \quad = \left| (1-e^{-s}) v \cdot n_H + s \int_{0}^{T_2-T_1} u_2(\sigma, \ovl{X}(\sigma))\cdot n_H \ud \sigma  + \int_0^s \int_{T_2}^{\sigma}  \ud \tau \ud \sigma  \right| \\
& \quad \quad \quad = \left| (1-e^{-s}) v \cdot n_H + s \int_{0}^{T_2 - T_1} u_2(\sigma, \ovl{X}(\sigma)) \cdot n_H \ud \sigma  + \frac{(s - T_2)^2}{2}  \right| \\
& \quad \quad \quad \geq -|v\cdot n_H| - T_3 \|u_2 \|_{L^1_tL^{\infty}_x} + \frac{(s-T_2)^2}{2} \\
& \quad \quad \quad > d_0,
\end{align*} whenever $s > \frac{T_3 - T_2}{2}$. Consequently, by the intermediate value theorem, there exists $t \in (T_2,T_3)$ such that $\ovl{X}(t,0,x,v) \in H$. Moreover, the choice (\ref{eq:choiceofT3pourlesvitesses}) gives, through (\ref{eq:bassesvitesses}), that $|\ovl{V}(t,0,x,v)\cdot n_H | \geq 5 $, which entails (\ref{eq:referencecharacteristics}).
\end{description}


\section{Fixed-point argument}
\label{sec: fixedpointVNS}

Let $\epsilon\in(0,\epsilon_0)$ be fixed, with $\epsilon_0$ to be chosen later on. We shall define an operator $\V_{\epsilon}$ acting on a domain $\S_{\epsilon} \subset \C^0([0,T]\times \T^2 \times \R^2)$ to be defined below. The goal of this section is to show that $\V_{\epsilon}$ has a fixed point. \par 
Throughout all this section, we fix $f_0$ and $u_0$ as given in the statement of Theorem \ref{thm:controllability}.

\subsection{Definition of the operator}
In order to describe the set $\S_{\epsilon}$, let $\epsilon \in (0,\epsilon_0)$, to be precised later on, and $\gamma >2$. Then, set 
\begin{equation}
\delta_1:= \frac{\gamma}{2(\gamma+3)}, \quad \delta_2:= \frac{\gamma + 2}{\gamma+3}.
\label{eq:parametersdelta}
\end{equation} According to the notation of Section \ref{sec: notationVNS}, we define
\begin{align}
 \S_{\epsilon}:= & \left\{ g \in \C^{0,\delta_2} (Q_T);  \right. \nonumber \\
& \quad \quad \quad  \textbf{(a)}\,\,  \left\| \rho_g \right\|_{\C^{0,\delta_1}(\Omega_T)} \leq c_3 \epsilon, \nonumber \\
& \quad \quad \quad  \textbf{(b)} \,\, \|(1+|v|)^{\gamma+2} (\ovl{f} - g) \|_{L^{\infty}(Q_T)} \label{eq:domainoftheoperator} \\
& \quad \quad \quad  \quad \quad \quad  \quad \quad \leq c_1 \left( \|f_0\|_{\C^1(\T^2\times \R^2)} + \| (1+|v|)^{\gamma+2} f_0\|_{\C^0(\T^2 \times \R^2)} \right), \nonumber \\
& \quad \quad \quad  \textbf{(c)} \,\,\, \|(\ovl{f} - g) \|_{\C^{0,\delta_2}(Q_T)} \nonumber \\
& \quad \quad \quad  \quad \quad \quad  \quad \quad \leq c_2 \left( \|f_0\|_{\C^1(\T^2\times \R^2)} + \| (1+|v|)^{\gamma+2} f_0\|_{\C^0(\T^2 \times \R^2)} \right) \nonumber \Big\}, \nonumber 
\end{align} where $c_1,c_2,c_3$ are constants depending only on $T,\omega,\gamma, \delta_1$ and $\delta_2$ (see (\ref{eq:choiceofc1}), (\ref{eq:choiceofc2}) and (\ref{eq:choiceofc3}) for details) and $\ovl{f}$ is given by Proposition \ref{proposition:referencetrajectory}. We observe that, for $c_1,c_2,c_3$ large enough and $f_0\in \C^1(\T^2\times \R^2)$, with high moments in $v$, satisfying 
\begin{equation*}
\left\| \int_{\R^2} f_0(x,v) \ud v \right\|_{\C^{0,\delta_1}(\Omega_T)} \leq c_3\epsilon,
\end{equation*} we trivially have that $\ovl{f} + f_0 \in \S_{\epsilon}$. Thus, $\S_{\epsilon} \not = \emptyset$. \par

We define the operator $\V_{\epsilon}$ in three steps: 

\begin{enumerate}
\item First, we associate to each $g\in \S_{\epsilon}$ the solution of a suitable Navier-Stokes system, namely $u^g$. 
\item Secondly, we solve a Vlasov equation thanks to the field $u^g$, forcing the absorption of particles in $\omega$, which produces $\tilde{\V}_{\epsilon}[g]$. 
\item Thirdly, we perform a regular extension of $\tilde{\V}_{\epsilon}[g]$, which gives $\V_{\epsilon}[g].$    
\end{enumerate} We shall describe next these three steps in detail.


\subsection{Navier-Stokes system with a drag force interaction term}
Let $g\in \S_{\epsilon}$. The aim of this section is to give a sense to the associated Navier-Stokes system
\begin{equation}
\left\{ \begin{array}{ll}
\partial_t u^g + \left( u^g \cdot \nabla \right) u^g -\Delta_x u^g(t) + \nabla_x p^g(t) = j_g(t) - \rho_g(t) u^g  , & \textrm{in }\Omega_T, \\
\div_x u^g(t,x) = 0, & \textrm{in } \Omega_T, \\
u^g_{|t=0} = u_0, & \textrm{in }\T^2,
\end{array} \right.
\label{eq:NavierStokesforg}
\end{equation} where
\begin{equation*}
j_g(t,x):= \int_{\R^2} v g(t,x,v) \ud v, \quad \rho_g(t,x) := \int_{\R^2} g(t,x,v)\ud v,
\end{equation*} and $u_0$ satisfies (\ref{eq:massechampinitiale}). Let us observe that the interaction between the fluid and the distribution function is taken into account through the term $j_g - \rho_g u$.

\begin{definition}
A time-dependent vector field $u$ is a weak solution of (\ref{eq:NavierStokesforg}) whenever
\begin{equation}
u \in \C^0([0,T];\V_{\sigma}') \cap L^{\infty}(0,T;\Hdiv) \cap L^2(0,T;\V_{\sigma}),
\label{eq:regularityofNSg}
\end{equation} and for any $\psi \in \C^1([0,T];\V_{\sigma})$ and $t\in (0,T]$, one has 
\begin{align}
& \int_{\T^2} u(t)\psi(t) \ud x + \int_0^t \int_{\T^2} \left( \nabla u : \nabla \psi - u \otimes u \cdot \nabla \psi -u\partial_t \psi \right) \ud s \ud x \nonumber \\
& \quad \quad \quad \quad = \int_{\T^2} u_0 \psi(0) \ud x + \int_0^t \int_{\T^2} \left( j_g(s) - \rho_g(s) u(s) \right) \psi(s)  \ud s \ud x. \label{eq:formulationfaibleNSg}
\end{align} 
\label{definition:NSsolution}
\end{definition}

\begin{proposition}
There exists $\epsilon_0 >0$ small enough such that for any $\epsilon \leq \epsilon_0$, $g\in \S_{\epsilon}$ and initial data $f_0$ and $u_0$ satisfying (\ref{eq:smalldata}) and (\ref{eq:massechampinitiale}), there exists a unique weak solution of system (\ref{eq:NavierStokesforg}) in the sense of Definition \ref{definition:NSsolution}, Moreover, this solution satisfies, for any $t\in [0,T]$,
\begin{align}
& \|u^g(t)\|^2_{L^2(\T^2)} + \int_0^t \|\nabla u^g(s)\|^2_{L^2(\T^2)} \ud s  \label{eq:estimationuniformeNSg} \\
& \quad \quad \quad \quad \quad \quad \quad \quad \leq 2 e^T\left( M^2 + T(1 + \| j_{\ovl{f}} \|^2_{L^{\infty}(0,T;L^2(\T^2))}) \right),  \nonumber
\end{align}  where $M>0$ is given by (\ref{eq:massechampinitiale}).
\label{proposition:existenceNSg}
\end{proposition}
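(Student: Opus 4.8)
The plan is to construct $u^g$ by a Faedo--Galerkin scheme, reading off the precise energy bound along the way, and to treat the drag term $-\rho_g u^g$ as a lower-order perturbation controlled by the smallness of $\epsilon$, with $j_g$ playing the role of a datum in $L^\infty(0,T;\Hdiv)$. First I would record that the definition of $\S_\epsilon$ controls both coupling quantities. Condition \textbf{(a)} in (\ref{eq:domainoftheoperator}) gives $\|\rho_g\|_{L^\infty(\Omega_T)} \leq c_3\epsilon$. For the current, writing $j_g - j_{\ovl f} = \int_{\R^2} v(g-\ovl f)\ud v$ and inserting the weight, one has
\[
|j_g - j_{\ovl f}|(t,x) \leq \|(1+|v|)^{\gamma+2}(\ovl f - g)\|_{L^\infty(Q_T)}\int_{\R^2}|v|(1+|v|)^{-(\gamma+2)}\ud v,
\]
and since $\gamma>2$ the last integral converges; together with the smoothness of $j_{\ovl f}$ and condition \textbf{(b)}, this yields $j_g \in L^\infty(0,T;\Hdiv)$ with $\|j_g\|_{L^\infty(0,T;L^2)}^2 \lesssim 1 + \|j_{\ovl f}\|_{L^\infty(0,T;L^2)}^2$ once $\epsilon_0$ is small.

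Next I would set up the Galerkin approximation on the divergence-free Fourier modes of $\T^2$ (the eigenbasis of the Stokes operator). The projected system is a quadratic ODE, solvable locally by Cauchy--Lipschitz, and global existence follows from the a priori estimate. Testing the projected equation against the approximation $u_n$, the convective term vanishes by $\div_x u_n = 0$, leaving
\[
\hf\frac{\ud}{\ud t}\|u_n\|_{L^2}^2 + \|\nabla u_n\|_{L^2}^2 = \int_{\T^2} j_g\cdot u_n \ud x - \int_{\T^2}\rho_g |u_n|^2 \ud x.
\]
Bounding the first term by $\hf\|j_g\|_{L^2}^2 + \hf\|u_n\|_{L^2}^2$ and the second by $c_3\epsilon\|u_n\|_{L^2}^2$ (no sign is used, since $\rho_g$ may change sign), Grönwall's lemma together with $\|u_0\|_{L^2}\leq M$ from (\ref{eq:massechampinitiale}) gives $\|u_n(t)\|_{L^2}^2 \leq e^{(1+2c_3\epsilon)T}(M^2 + T\sup_t\|j_g\|_{L^2}^2)$; choosing $\epsilon_0$ so small that $e^{2c_3\epsilon_0 T}\leq 2$ produces exactly (\ref{eq:estimationuniformeNSg}), uniformly in $n$.

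I would then pass to the limit. The estimate bounds $u_n$ in $L^\infty(0,T;\Hdiv)\cap L^2(0,T;\V_\sigma)$; writing the convection as $\div_x(u_n\otimes u_n)$ and using the two-dimensional Ladyzhenskaya inequality $\|u_n\|_{L^4}^2\lesssim \|u_n\|_{L^2}\|u_n\|_{H^1}$ gives $u_n\otimes u_n \in L^2(0,T;L^2)$, whence $\partial_t u_n$ is bounded in $L^2(0,T;\V_\sigma')$. The Aubin--Lions lemma ($\V_\sigma \hookrightarrow\hookrightarrow \Hdiv \hookrightarrow \V_\sigma'$) yields a subsequence converging strongly in $L^2(0,T;\Hdiv)$, enough to pass to the limit in the quadratic term $u_n\otimes u_n$ and, using $\rho_g\in L^\infty$, in the linear term $\rho_g u_n$. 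The limit $u^g$ satisfies (\ref{eq:formulationfaibleNSg}), inherits (\ref{eq:estimationuniformeNSg}) by weak lower semicontinuity, and lies in the class (\ref{eq:regularityofNSg}); continuity in time into $\V_\sigma'$ follows since $u^g\in L^2(0,T;\V_\sigma)$ with $\partial_t u^g\in L^2(0,T;\V_\sigma')$.

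For uniqueness, given two solutions $u_1,u_2$ with the same data, set $w:=u_1-u_2$; it solves the difference equation with zero initial datum, the nonlinear defect $(w\cdot\nabla)u_1 + (u_2\cdot\nabla)w$, and damping $-\rho_g w$. Testing against $w$ and using $\int_{\T^2}(u_2\cdot\nabla)w\cdot w\ud x = 0$, the only delicate contribution is $\int_{\T^2}(w\cdot\nabla)u_1\cdot w\ud x$, which in two dimensions is controlled by $\|w\|_{L^4}^2\|\nabla u_1\|_{L^2}\lesssim \|w\|_{L^2}\|\nabla w\|_{L^2}\|\nabla u_1\|_{L^2}$; absorbing $\|\nabla w\|_{L^2}^2$ by Young and bounding the damping by $c_3\epsilon\|w\|_{L^2}^2$ leaves $\frac{\ud}{\ud t}\|w\|_{L^2}^2 \lesssim (\|\nabla u_1\|_{L^2}^2 + \epsilon)\|w\|_{L^2}^2$, and since $\|\nabla u_1\|_{L^2}^2\in L^1(0,T)$ by (\ref{eq:estimationuniformeNSg}), Grönwall forces $w\equiv 0$. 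I expect the main obstacle to be the compactness step for the quadratic term and the bookkeeping needed to obtain the precise constant in (\ref{eq:estimationuniformeNSg}); the sign-indefinite damping $-\rho_g u^g$ is harmless precisely because $\|\rho_g\|_{L^\infty}\leq c_3\epsilon$ can be made as small as needed by shrinking $\epsilon_0$.
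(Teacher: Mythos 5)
Your proof is correct, but it takes a genuinely different route from the paper's. The paper does not use a Galerkin scheme: it linearises the drag term by an iteration, solving at each step the Navier--Stokes system with the frozen source $j_g-\rho_g u^n$ via the cited existence theorem (Theorem \ref{thm:Leray}), proving the uniform bound (\ref{eq:estimationuniformeNSg}) by induction on $n$ --- this is where the choice (\ref{eq:epsilon0}) of $\epsilon_0$ enters, playing exactly the role of your smallness condition on $c_3\epsilon$ --- and then passing to the limit by weak compactness; uniqueness is obtained by feeding $\rho_g(u-v)$ into the black-box stability estimate of Theorem \ref{thm:Leraystabilite} and applying Gr\"onwall. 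Your version keeps the drag term in the equation, reads the energy identity directly off the Galerkin approximation (where the cancellation of the convective term and the bound $\left|\int_{\T^2}\rho_g|u_n|^2\ud x\right|\le c_3\epsilon\|u_n\|_{L^2}^2$ are legitimate since $u_n$ lies in the Galerkin span), and proves uniqueness by the standard two-dimensional Ladyzhenskaya argument; this is more self-contained, at the price of redoing the Aubin--Lions compactness step that the paper delegates to the references, and it avoids the induction entirely. Both routes rest on the same preliminary control of the coupling terms, namely point (a) of (\ref{eq:domainoftheoperator}) and the uniform $L^{\infty}_tL^2_x$ bound on $j_g$ of Lemma \ref{lemma:jg}, which you rederive correctly. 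Two cosmetic points: $j_g$ has no reason to be divergence-free, so it belongs to $L^{\infty}(0,T;L^2(\T^2)^2)$ rather than to $\Hdiv$ (only its Leray projection is seen by the weak formulation, so nothing breaks); and the single condition $e^{2c_3\epsilon_0 T}\le 2$ does not quite deliver the constant in (\ref{eq:estimationuniformeNSg}) if you also use the crude bound $\sup_t\|j_g\|_{L^2}^2\lesssim 1+\|j_{\ovl f}\|^2_{L^{\infty}_tL^2_x}$ with an implicit factor $2$ --- you must also keep the $\epsilon$-dependent terms in $(\|j_{g-\ovl f}\|_{L^2}+\|j_{\ovl f}\|_{L^2})^2$ small, which a further shrinking of $\epsilon_0$, as in (\ref{eq:epsilon0}), provides.
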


Let us show a property of $j_g$ that will be important in the proof of the result above.

\begin{lemma}
Let $\epsilon>0$. For any $g \in \S_{\epsilon}$, we have
\begin{equation}
\sup_{t\in [0,T]} \|j_g(t)\|^2_{L^2(\T^2)} \leq 2 \left( \mathcal{I}^2c_1 \epsilon^2 + \| j_{\ovl{f}} \|_{L^{\infty}(0,T;L^2(\T^2))}^2 \right),
\label{eq:jg}
\end{equation} where
\begin{equation}
\mathcal{I}:= \int_{\R^2} \frac{ |v| \ud v}{(1+|v|)^{\gamma +2}} < \infty.  
\label{eq:constantI}
\end{equation}
\label{lemma:jg}
\end{lemma}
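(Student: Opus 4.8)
The plan is to isolate in $j_g$ the contribution of the reference density $j_{\ovl{f}}$ and to control the remainder by means of the defining bound \textbf{(b)} of the set $\S_{\epsilon}$. For every $(t,x)\in\Omega_T$ I would simply write
\[
j_g(t,x) = \int_{\R^2} v\,\ovl{f}(t,x,v)\ud v + \int_{\R^2} v\left(g-\ovl{f}\right)(t,x,v)\ud v = j_{\ovl{f}}(t,x) + \int_{\R^2} v\left(g-\ovl{f}\right)(t,x,v)\ud v,
\]
so that everything reduces to estimating the last integral pointwise in $(t,x)$ and then passing to the $L^2(\T^2)$ norm.

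Next I would exploit that $g\in\S_{\epsilon}$ satisfies \textbf{(b)}, which, combined with the smallness hypothesis (\ref{eq:smalldata}) on the initial datum $f_0$, provides the uniform pointwise decay
\[
\left|\left(\ovl{f}-g\right)(t,x,v)\right| \leq \frac{c_1\epsilon}{(1+|v|)^{\gamma+2}}, \qquad \forall (t,x,v)\in Q_T.
\]
Multiplying by $|v|$ and integrating in $v$ then gives, for all $(t,x)$,
\[
\left|\int_{\R^2} v\left(g-\ovl{f}\right)(t,x,v)\ud v\right| \leq c_1\epsilon \int_{\R^2}\frac{|v|}{(1+|v|)^{\gamma+2}}\ud v = c_1\epsilon\,\mathcal{I}.
\]
The one point deserving verification here is the finiteness of $\mathcal{I}$: the integrand decays like $|v|^{-(\gamma+1)}$ as $|v|\to\infty$, which is integrable over $\R^2$ exactly because $\gamma+1>2$, guaranteed by the standing assumption $\gamma>2$, while near the origin the integrand is bounded; hence $\mathcal{I}<\infty$.

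Finally I would turn this bound, uniform in $x$, into an $L^2$ estimate. Since the Lebesgue measure of $\T^2$ is normalised to $1$, one has $\|h\|_{L^2(\T^2)}\leq\|h\|_{L^\infty(\T^2)}$, and applying this to $h=\int_{\R^2}v(g-\ovl{f})\ud v$ yields the control of the remainder in $L^2(\T^2)$ by $c_1\epsilon\,\mathcal{I}$, uniformly in $t$. Inserting this into the decomposition of $j_g$ and using the elementary inequality $(a+b)^2\leq 2a^2+2b^2$ produces a bound of the form $\|j_g(t)\|_{L^2(\T^2)}^2 \leq 2\mathcal{I}^2 c_1^2\epsilon^2 + 2\|j_{\ovl{f}}(t)\|_{L^2(\T^2)}^2$; taking the supremum over $t\in[0,T]$ then gives (\ref{eq:jg}). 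There is no genuine obstacle in this argument: it is a one-line decomposition followed by a pointwise estimate, and the only genuinely substantive checks are the convergence of $\mathcal{I}$ (which forces the constraint $\gamma>2$) and the correct chaining of bound \textbf{(b)} with (\ref{eq:smalldata}) to obtain the pointwise control of $\ovl{f}-g$.
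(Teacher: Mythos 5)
Your proof is correct and follows essentially the same route as the paper: the same decomposition $j_g = j_{\ovl{f}} + \int_{\R^2} v\,(g-\ovl{f})\ud v$, the same use of point \textbf{(b)} chained with (\ref{eq:smalldata}) to get the pointwise decay $c_1\epsilon\,(1+|v|)^{-(\gamma+2)}$, and the same integrability check for $\mathcal{I}$ based on $\gamma>2$. Note that, exactly like the paper's own computation, you end up with $2\,\mathcal{I}^2 c_1^2\epsilon^2$ rather than the $2\,\mathcal{I}^2 c_1\epsilon^2$ displayed in (\ref{eq:jg}); the exponent on $c_1$ in the statement appears to be a typo and is harmless for the way the lemma is used.
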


\begin{proof}
We write, by the triangular inequality,
\begin{align}
\|j_g(t)\|_{L^2(\T^2)^2}^2 & = \int_{\T^2} \left| \int_{\R^2} v g(t,x,v) \ud v \right|^2 \ud x \nonumber \\
& = \int_{\T^2} \left| \int_{\R^2} v \left( g - \ovl{f} + \ovl{f} \right)(t,x,v) \ud v \right|^2 \ud x \nonumber \\
& \leq \int_{\T^2} \left( \left| \int_{\R^2} v(g-\ovl{f}) \ud v \right| + \left| \int_{\R^2} v\ovl{f} \ud v \right| \right)^2 \ud x \nonumber \\
& \leq 2 \int_{\T^2} \left( \int_{\R^2} |v||g-\ovl{f}| \ud v  \right)^2 \ud x + 2 \int_{\T^2} \left| \int_{\R^2} v\ovl{f} \ud v  \right|^2 \ud x.  \label{eq:lemma411}
\end{align} Let us note that, from (\ref{eq:regularityofthereferencedistribution}), we have 
\begin{equation}
\| j_{\ovl{f}} \|_{L^{\infty}(0,T;L^2(\T^2))}^2 := \sup_{t\in [0,T]} \int_{\T^2} \left| \int_{\R^2} v\ovl{f}(t) \ud v  \right|^2 \ud x < \infty,
\label{eq:lemma413}
\end{equation} which is a positive constant, independent from $g$. \par 
We have to treat the first part of (\ref{eq:lemma411}). Indeed,
\begin{align}
& \int_{\T^2} \left(  \int_{\R^2} |v||(g-\ovl{f})(t,x,v)| \ud v\right)^2 \ud x \nonumber \\
& \quad \quad  \leq   \left( \int_{\R^2} \frac{|v| \ud v}{(1+|v|)^{\gamma + 2}} \right)^2 \| (1+|v|)^{\gamma+2}f_0 \|_{L^{\infty}}^2, \nonumber \\
& \quad \quad \leq  \mathcal{I}^2 c_1^2 \epsilon^2, \label{eq:lemma412}
\end{align} where we have used (\ref{eq:smalldata}), point (b) and (\ref{eq:constantI}).
Finally, putting together (\ref{eq:lemma412}), (\ref{eq:lemma413}) and (\ref{eq:lemma411}), we obtain (\ref{eq:jg}).
\end{proof}

\begin{proof}[Proof of Proposition \ref{proposition:existenceNSg}]
Firstly, we construct a solution, which proves the existence part. Secondly, we show that this solution must be unique.\par 
\vspace{0.5em}
\textit{1. Existence.} Let us consider the following iterative scheme, for every $n\in \N$,
\begin{equation*}
\left\{   \begin{array}{ll}
\partial_t u^{n+1} + \left( u^{n+1} \cdot \nabla \right)u^{n+1} - \Delta u^{n+1} + \nabla p^{n+1} = j_g - \rho_g u^n, & (0,T) \times \T^2,\\ 
\div u^{n+1} = 0, & t \in (0,T), \\
u^{n+1}|_{t=0} = u_0, & x \in \T^2.
\end{array} \right.
\end{equation*} We observe that, since $g \in \S_{\varepsilon}$, we have $j_g \in L^2(0,T;\V_{\sigma}')$. In addition, since $u_0 \in \V_{\sigma}$ by (\ref{eq:massechampinitiale}), Theorem \ref{thm:Leray} yields
\begin{equation*}
u^1 \in L^{\infty}(0,T;\Hdiv) \cap L^2(0,T;\V_{\sigma}). 
\end{equation*} Thus, by induction,
\begin{equation*}
u^n \in L^{\infty}(0,T;\Hdiv) \cap L^2(0,T;\V_{\sigma}), \quad \forall n \in \N. 
\end{equation*} Furthermore, according to Definition \ref{definition:Leraysolution}, we deduce that for any $\psi \in \C^1(\R^+;\V_{\sigma})$, and any $t\in (0,T]$ and $n\in \N$,
\begin{align}
& \int_{\T^2} u^{n+1}(t)\psi(t) \ud x + \int_0^t \int_{\T^2} \left( \nabla u^{n+1} : \nabla \psi - u^{n+1} \otimes u^{n+1} \cdot \nabla \psi -u^{n+1} \partial_t \psi \right) \ud s \ud x \nonumber \\
& \quad \quad \quad \quad = \int_{\T^2} u_0 \psi(0) \ud x + \int_0^t \int_{\T^2} \left( j_g(s) - \rho_g(s) u^n(s) \right) \psi(s)  \ud s \ud x. \label{eq:faibleNSenn}
\end{align} Moreover, the energy estimate (\ref{eq:NSestimate}) gives, for any $n\in \N$,
\begin{align}
& \|u^{n+1}(t)\|_{L^2}^2 + \int_0^t \| \nabla u^{n+1} (s)\|^2_{L^2} \ud s \nonumber \\
& \quad \quad \quad \leq e^t \left( \|u_0\|_{L^2}^2 + \int_0^t \left( \|j_g(s)\|_{L^2}^2 + \|\rho_g u^n(s)\|_{L^2}^2 \right)\ud s   \right). 
\label{eq:estimateinn}
\end{align} Our aim is to obtain uniform estimates with respect to $n\in \N$. \par 

Indeed, choosing 
\begin{equation}
\epsilon_0 \leq \min \left\{ 1, \frac{1}{\mathcal{I}c_1}, \frac{1}{c_3\sqrt{T}}  \frac{M}{c_3 \sqrt{2Te^T[M^2 + T(1 + \| j_{\ovl{f}}\|_{L^{\infty}_tL^2_x}^2)]}}  \right\},
\label{eq:epsilon0}
\end{equation} where $M>0$ is given by (\ref{eq:massechampinitiale}) and $\mathcal{I}$ is given by (\ref{eq:constantI}), one has 
\begin{align}
& \| u^n (t) \|_{L^2}^2 + \int_0^t \|\nabla u^n(s)\|_{L^2}^2 \ud s  \label{eq:claimuniformestimate} \\ 
& \quad \quad \quad \quad \quad \quad \quad \leq 2e^T \left[ M^2 + T( 1 + \|j_{\ovl{f}}\|_{L^{\infty}_t L^2_x}^2) \right], \quad \forall n\in \N. \nonumber
\end{align} To prove this claim, let us proceed by induction. Indeed, for $n=0$, (\ref{eq:estimateinn}) yields, thanks to point (a), (\ref{eq:massechampinitiale}), (\ref{eq:smalldata}) and Lemma \ref{lemma:jg},
\begin{align}
& \|u^1(t) \|_{L^2}^2 + \int_0^t \| \nabla u^1(s) \|_{L^2}^2 \ud s \nonumber \\
& \quad \quad \quad \leq e^T \left( \|u_0\|_{L^2}^2 + \int_0^t \|j_g(s)\|_{L^2}^2 \ud s + T\|\rho_g\|^2_{L^{\infty}}\|u_0\|_{L^2}^2  \right) \nonumber \\
& \quad \quad \quad \leq e^T\left( M^2 + 2T(1 + \| j_{\ovl{f}}\|_{L^{\infty}_t L^2_x}^2 ) + Tc_3^2\epsilon^2 M^2  \right) \nonumber \\
& \quad \quad \quad \leq 2e^T \left[ M^2 + T( 1 + \|j_{\ovl{f}}\|^2_{L^{\infty}_t L^2_x}) \right], \nonumber
\end{align} using the choice (\ref{eq:epsilon0}). \par 
Let now be any $N\in \N^*$ and suppose that (\ref{eq:claimuniformestimate}) holds for any $n\in \N$ up to $N-1$. Thus, in the same fashion as before, (\ref{eq:estimateinn}) yields 
\begin{align}
& \|u^N(t) \|_{L^2}^2 + \int_0^t \| \nabla u^N(s) \|_{L^2}^2 \ud s \nonumber \\
& \quad \quad \quad \leq e^T \left( \|u_0\|_{L^2}^2 + \int_0^t \|j_g(s)\|_{L^2}^2 \ud s + T\|\rho_g\|^2_{L^{\infty}}\|u^{N-1}\|_{L^{\infty}_t L^2_x}^2  \right) \nonumber \\
& \quad \quad \quad \leq e^T\left( M^2 + 2T( 1 + \| j_{\ovl{f}} \|^2_{L^{\infty}_t L^2_x} ) + T c_3^2 \epsilon^2 \|u^{N-1}\|_{L^{\infty}_t L^2_x}^2   \right) \nonumber \\
& \quad \quad \quad \leq 2 e^T \left[ M^2 + T( 1 + \| j_{\ovl{f}} \|^2_{L^{\infty}_t L^2_x}) \right]. \nonumber
\end{align} This shows (\ref{eq:claimuniformestimate}).

Consequently, $(u^n)_{n\in \N}$ is uniformly bounded in $L^2(0,T;\V_{\sigma})$, which implies that $\exists u \in L^2(0,T;\V_{\sigma})$ such that
\begin{align}
& u^n \rightharpoonup u \textrm{ in } L^2(0,T;\V_{\sigma}), \label{eq:convergencefaible} \\
& u^n \rightarrow u \textrm{ in } L^2(0,T;\Hdiv), \label{eq:convergenceforte}
\end{align} thanks to Banach-Alaoglu's theorem and Rellich's theorem. Thus, a compactness argument allows to pass to the limit in (\ref{eq:faibleNSenn}), which gives (\ref{eq:formulationfaibleNSg}). This can be done in detail following \cite[Section 2.2.4]{Chemin}. \par 
Moreover, (\ref{eq:convergencefaible}) and (\ref{eq:convergenceforte}), combined with (\ref{eq:claimuniformestimate}), gives (\ref{eq:estimationuniformeNSg}). \par 
\vspace{+0.5em}


\textit{2. Uniqueness.} Let us prove next that the solution constructed above is unique. Consider another solution of (\ref{eq:NavierStokesforg}), namely $v$. Thus, by Theorem \ref{thm:Leraystabilite}, $u-v$ satisfies the estimate
\begin{align*}
& \|(u-v)(t)\|_{L^2}^2 + \int_0^t \|\nabla (u-v)(s)\|_{L^2}^2 \ud s \\
& \quad \quad \quad \leq exp\left( T + cE(t)^2  \right) \int_0^t \|\rho_g(u-v)(s)\|_{L^2}^2 \ud s,   
\end{align*} for some constant $c>0$ and
\begin{equation}
E(t) := e^T\|u_0\|^2_{L^2} + e^T\min\left\{ \int_0^T \|j_g - \rho_g u \|^2_{L^2} \ud s, \int_0^T \|j_g - \rho_g v \|^2_{L^2} \ud s        \right\}. 
\end{equation} We shall prove that $E(t)$ can be bounded independently from $u$ or $v$. Indeed, since $v$ is a solution of (\ref{eq:NavierStokesforg}), the estimate (\ref{eq:NSestimate}) gives 
\begin{equation*}
\| v(t) \|_{L^2}^2 \leq e^T \left( \|u_0\|_{L^2}^2 + \int_0^t \|j_g(s)\|_{L^2}^2 \ud s + \|\rho_g\|_{L^{\infty}}^2 \int_0^t \|v(s)\|_{L^2}^2 \ud s    \right). 
\end{equation*} Thus, (\ref{eq:smalldata}), Lemma \ref{lemma:jg} and point (a) combined with Gronwall's lemma give 
\begin{equation} 
\sup_{t\in [0,T]} \|v(t)\|_{L^2}^2 \leq \tilde{C}(T,\gamma,\ovl{f}),
\end{equation} for some constant $\tilde{C}>0$. This, using (\ref{eq:estimationuniformeNSg}), yields
\begin{equation}
\sup_{t\in [0,T]} E(t) \leq C,
\end{equation} for some constant $C>0$. Then, we find, by point (a),
\begin{equation*}
\|(u-v)(t)\|_{L^2}^2 + \int_0^t \|\nabla (u-v)(s)\|_{L^2}^2 \ud s \lesssim \int_0^t \|(u-v)(s)\|_{L^2}^2 \ud s,   
\end{equation*} for any $t\in [0,T]$, which, thanks to Gronwall's lemma allows to write
\begin{equation*}
\sup_{t\in [0,T]} \|(u-v)(t)\|_{L^2} \leq 0.
\end{equation*} Henceforth, $u\equiv v$.
\end{proof}


We now provide further regularity properties of $u^g$ that will be important to define the characteristics associated to $-v + u^g$, used in the sequel.

\begin{proposition}
Let $\epsilon \leq \epsilon_0$, where $\epsilon_0$ is given by Proposition \ref{proposition:existenceNSg}. Then, there exists a constant $K_1 = K_1(T,M,\ovl{f})>0$, such that for any $g\in \S_{\epsilon}$, the solution of (\ref{eq:NavierStokesforg}) satisfies 
\begin{equation}
\|u^g\|_{L^2(0,T;L^{\infty}(\T^2))} \leq K_1.
\label{eq:champuniforme}
\end{equation} Moreover,
\begin{equation}
u^g \in L^2(0,T;\C^1(\T^2;\R^2)) \cap \C^0([0,T];\V_{\sigma}) \cap L^2(0,T;H^2(\T^2)).
\label{eq:champLipschitz}
\end{equation}
\end{proposition}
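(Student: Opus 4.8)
The plan is to upgrade the weak solution produced in Proposition~\ref{proposition:existenceNSg} by a two-stage bootstrap, taking as the only quantitative input the uniform energy bound (\ref{eq:estimationuniformeNSg}), which already controls $u^g$ in $L^\infty(0,T;\Hdiv)\cap L^2(0,T;\V_\sigma)$ uniformly in $g\in\S_\epsilon$. In the first stage I would reach the strong-solution class $\C^0([0,T];\V_\sigma)\cap L^2(0,T;H^2)$; the bound (\ref{eq:champuniforme}) is then immediate from the two-dimensional Sobolev embedding $H^2(\T^2)\hookrightarrow L^\infty(\T^2)$, since $\|u^g\|_{L^2_tL^\infty_x}\lesssim\|u^g\|_{L^2_tH^2_x}$. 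In the second stage I would push the spatial regularity past $H^2$ by an $L^q$ maximal-regularity argument, which is what is actually needed for the $\C^1$ conclusion.

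For the first stage I would run the classical two-dimensional higher-order energy estimate: testing (\ref{eq:NavierStokesforg}) against the Stokes operator $-\Delta u^g$ (legitimate once it is justified on a Galerkin/regularised level and passed to the limit), the pressure drops out by incompressibility and one is left with $\tfrac12\tfrac{d}{dt}\|\nabla u^g\|_{L^2}^2+\|\Delta u^g\|_{L^2}^2$ on the left. The convection term is controlled by $\|u^g\|_{L^4}\|\nabla u^g\|_{L^4}\|\Delta u^g\|_{L^2}$ together with the Ladyzhenskaya/Gagliardo--Nirenberg inequalities $\|u^g\|_{L^4}\lesssim\|u^g\|_{L^2}^{1/2}\|\nabla u^g\|_{L^2}^{1/2}$ and $\|\nabla u^g\|_{L^4}\lesssim\|\nabla u^g\|_{L^2}^{1/2}\|\Delta u^g\|_{L^2}^{1/2}$, which after a Young inequality leaves a term $\lesssim\|u^g\|_{L^2}^2\|\nabla u^g\|_{L^2}^4$. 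The source is harmless: $j_g\in L^\infty(Q_T)$ (the moment bounds of points (a)--(b) and (\ref{eq:smalldata}) make $\int|v||g|\,\mathrm{d}v$ bounded, exactly as in the proof of Lemma~\ref{lemma:jg}), while $\int\rho_g u^g\cdot\Delta u^g$ is estimated by $\|\rho_g\|_{L^\infty}\|u^g\|_{L^2}\|\Delta u^g\|_{L^2}$ and absorbed using point (a). Feeding the a~priori bound $\int_0^T\|\nabla u^g\|_{L^2}^2<\infty$ from (\ref{eq:estimationuniformeNSg}) into Gronwall's lemma closes the estimate and yields $u^g\in L^\infty(0,T;\V_\sigma)\cap L^2(0,T;H^2)$ with a constant depending only on $T,M,\ovl f$. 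The equation then gives $\partial_t u^g\in L^2(0,T;L^2)$ (every term lies in $L^2_tL^2$, using $u^g\in L^\infty_tL^4\cap L^2_tW^{1,4}$ for the convection term), whence $u^g\in\C^0([0,T];\V_\sigma)$ by the Aubin--Lions/Lions--Magenes interpolation.

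For the $\C^1$ regularity I would treat (\ref{eq:NavierStokesforg}) as a linear Stokes system with right-hand side $F:=j_g-\rho_g u^g-(u^g\cdot\nabla)u^g$ and invoke maximal $L^p$-regularity for the Stokes operator on $\T^2$. The point is that $F\in L^2(0,T;L^q(\T^2))$ for some $q>2$: indeed $j_g\in L^\infty(Q_T)$, $\rho_g u^g\in L^\infty(0,T;L^q)$ since $u^g\in\C^0_tH^1_x\hookrightarrow\C^0_tL^q_x$ for every $q<\infty$, and $(u^g\cdot\nabla)u^g\in L^2(0,T;L^q)$ by H\"older from $u^g\in\C^0_tL^{q_1}_x$ and $\nabla u^g\in L^2_tL^{q_2}_x$ (the latter from $u^g\in L^2_tH^2_x$) with $q_1,q_2$ large. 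Maximal regularity then gives $u^g\in L^2(0,T;W^{2,q})$, the compatibility of the initial datum being guaranteed by the high regularity $u_0\in H^2(\T^2)\cap\C^1(\T^2)$, which embeds into the relevant trace space $B^{2-2/p}_{q,p}=B^{1}_{q,2}$ for $p=2$. Since $q>2$ one has $W^{2,q}(\T^2)\hookrightarrow\C^{1}(\T^2)$, which gives $u^g\in L^2(0,T;\C^1(\T^2;\R^2))$ and completes (\ref{eq:champLipschitz}); all constants remain uniform over $g\in\S_\epsilon$ because every norm of $F$ above is bounded in terms of $T,M,\ovl f$ only.

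I expect the main obstacle to be precisely the $\C^1$ claim rather than the energy estimate. In two dimensions the energy regularity only yields $L^2_tH^2_x\hookrightarrow L^2_t\C^{0,\alpha}_x$ with $\alpha<1$, which falls short of $\C^1$; and one cannot reach $L^2_tH^3_x$ either, because the source is merely H\"older (point (a)) and carries no $H^1_x$ control. The resolution must therefore exploit the \emph{extra integrability} of $F$ rather than extra differentiability — the crucial facts being $j_g\in L^\infty$ and the gain $(u^g\cdot\nabla)u^g\in L^2_tL^q_x$ for $q>2$ coming from the already-established $L^2_tH^2_x$ bound — combined with super-critical ($q>2$) Stokes maximal regularity and a careful check that $u_0$ lies in the associated Besov trace space.
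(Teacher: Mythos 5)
Your second stage (treating the equation as a Stokes system with source $F=j_g-\rho_g u^g-(u^g\cdot\nabla)u^g\in L^2(0,T;L^q)$ for some $q>2$ and invoking maximal regularity plus $W^{2,q}(\T^2)\hookrightarrow\C^1(\T^2)$) is essentially identical to the paper's bootstrap, which applies Theorem~\ref{thm:Stokesclassical} first with $s=q=2$ and then with $s=2$, $q=3$. Your diagnosis that the $\C^1$ claim must be reached through extra integrability rather than extra differentiability is exactly right.

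There is, however, a genuine gap in your first stage, and it concerns the quantitative part of the statement. The proposition asserts $\|u^g\|_{L^2_tL^\infty_x}\leq K_1(T,M,\ovl f)$, where $M$ controls only $\|u_0\|_{H^{1/2}(\T^2)}$ by (\ref{eq:massechampinitiale}); the $H^1$ and $H^2$ norms of $u_0$ are assumed finite but are \emph{not} bounded in terms of $M$. Your $H^1$ energy estimate (testing against $-\Delta u^g$) necessarily starts from $\|\nabla u_0\|_{L^2}^2$ at $t=0$, so the resulting bound on $\|u^g\|_{L^2_tH^2_x}$, and hence your $K_1$, depends on $\|u_0\|_{H^1}$ rather than on $M$. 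This is not a cosmetic issue: $K_1$ feeds into $K_2$ in Lemma~\ref{lemma:b}, into the choice of $\delta$ in (\ref{eq:choiceofdelta}), and into the constants $c_1,c_2,c_3$ defining the fixed-point domain $\S_\epsilon$, all of which the paper needs to depend only on $T$, $M$, $\ovl f$ (and $\omega,\gamma$). The paper avoids the problem by working at exactly the regularity level that the hypothesis quantifies: it splits off the mean, $\hat u=u^g-m_u(t)$, verifies that the source (including the extra term $(m_u\cdot\nabla)u^g$) lies in $L^2(0,T;H_0^{-1/2})$ with norm controlled by (\ref{eq:estimationuniformeNSg}), and applies the $H^{1/2}_0$ propagation result (Theorem~\ref{thm:Lerayregularite}) to get $\hat u\in L^2(0,T;H_0^{3/2})$ uniformly in terms of $M$; the bound (\ref{eq:champuniforme}) then follows from $H^{3/2}(\T^2)\hookrightarrow L^\infty(\T^2)$. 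To repair your argument you would either need to replace the $H^1$ energy estimate by such an $H^{1/2}$-level estimate, or justify why a $K_1$ depending additionally on $\|u_0\|_{H^1}$ does not disturb the later uniformity arguments.
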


\begin{proof}
We shall prove first (\ref{eq:champuniforme}), by using a regularity result for the Navier-Stokes system (Theorem \ref{thm:Lerayregularite}). Secondly, we prove (\ref{eq:champLipschitz}) thanks to the regularising properties of the Stokes system (Theorem \ref{thm:Stokesclassical}).\par 
\vspace{0.5em}
\textit{1. Estimate} $L^2_t L^{\infty}_x$.  Let us consider, for any fixed $g\in \S_{\epsilon}$, the solution of (\ref{eq:NavierStokesforg}) given by Proposition \ref{proposition:existenceNSg}, that we note $u$. We observe that Theorem \ref{thm:Lerayregularite} is stated under the mean-free assumption, which requires to take care of the mean of $u$ in our case. Let us set
\begin{equation}
\hat{u}(t,x) := u(t,x) - m_u(t), \quad (t,x) \in [0,T]\times \T^2,
\end{equation} with
\begin{equation}
m_u(t) := \int_{\T^2} u(t,x) \ud x.
\end{equation} We deduce from (\ref{eq:regularityofNSg}) that, according to (\ref{eq:FouriercoefficientsVNS}),
\begin{equation}
\hat{u}\in L^{\infty}(0,T;\Hdiv \cap L^2_0) \cap L^2(0,T;\V_{\sigma}\cap H^1_0). 
\end{equation} Moreover, $\hat{u}$ satisfies the system
\begin{equation}
\left\{  \begin{array}{ll}
\partial_t \hat{u} + \left(  \hat{u}\cdot \nabla \right) \hat{u} -\Delta \hat{u} + \nabla p = F_{hom}(t,x), & (t,x)\in (0,T)\times \T^2, \\
\div \hat{u} = 0, & (t,x) \in (0,T)\times \T^2 , \\
\int_{\T^2} \hat{u}(t,x) \ud x = 0, & t\in (0,T), \\
\hat{u}_{|t=0} = u_0 - \int_{\T^2} u_0 \ud x, & x\in \T^2, 
\end{array}  \right.
\label{eq:systemeamoyennenulle}
\end{equation} with
\begin{equation}
F_{hom}(t,x) := j_g - \int_{\T^2}j_f \ud x + \rho_g u -\int_{\T^2} \rho_g u \ud x + \left( m_u(t)\cdot \nabla \right) u.
\end{equation} Our goal is to apply Theorem \ref{thm:Lerayregularite} to system (\ref{eq:systemeamoyennenulle}). In what follows, we shall use (\ref{eq:normsFourier}) systematically.\par 
Firstly, let us observe that
\begin{equation*}
\|\hat{u}_{|t=0}\|_{H_0^{\frac{1}{2}}(\T^2)}^2 \leq \|u_0\|_{H^{\frac{1}{2}}(\T^2)}^2 < M^2, 
\end{equation*} according to (\ref{eq:massechampinitiale}). \par 
In order to treat the source term, we write
\begin{equation}
F_{hom}:= \mathcal{T}_1 + \mathcal{T}_2 + \mathcal{T}_3,
\label{eq:T1T2T3}
\end{equation} with
\begin{equation*}
\mathcal{T}_1 := j_g - \int_{\T^2} j_g \ud x, \quad \mathcal{T}_2 := \rho_g u - \int_{\T^2} \rho_g u \ud x, \quad \mathcal{T}_3 := \left( m_u(t)\cdot \nabla \right) u. 
\end{equation*} For the first term, Lemma \ref{lemma:jg} allows to write
\begin{align}
\|\mathcal{T}_1\|_{L^{\infty}(0,T; H^{-\frac{1}{2}}_0)}^2 &= \sup_{t\in[0,T]} \sum_{k\in\Z^2\setminus\left\{ 0 \right\}} \frac{1}{|k|} |j_{g,k}(t)|^2 \label{eq:majorationT1} \\
& \leq \sup_{t\in[0,T]} \sum_{k\in\Z^2}|j_{g,k}(t)|^2 \nonumber \\
& = \|j_g\|_{L^{\infty}(0,T;L^2)}^2 < 2\left( 1 + \|j_{\ovl{f}}\|_{L^{\infty}_t L^2_x}^2 \right). \nonumber
\end{align} Analogously, using (\ref{eq:smalldata}), point (a) and (\ref{eq:estimationuniformeNSg}), we obtain
\begin{align}
\|\mathcal{T}_2\|_{L^{\infty}(0,T; H^{-\frac{1}{2}}_0)} & = \sup_{t\in[0,T]} \sum_{k\in\Z^2\setminus\left\{ 0 \right\}} \frac{1}{|k|} |\left( \rho_g u \right)_k(t)|^2 \label{eq:majorationT2} \\
& \leq \| \rho_g\|^2_{L^{\infty}(Q_T)}\sup_{t\in[0,T]} \sum_{k\in\Z^2}|u_k(t)|^2 \leq c_3^2 \epsilon^2 \|u\|_{L^{\infty}(0,T;L^2)}^2 \nonumber \\
& \leq c_3^2 \epsilon^2 2 e^T \left( M^2 + T(1 + \|j_{\ovl{f}}\|_{L^{\infty}_t L^2_x}^2)   \right). \nonumber
\end{align} Finally, we show that $\mathcal{T}_3 \in L^2(0,T;L^2_0)$ which, a fortiori, implies $\mathcal{T}_3 \in L^2(0,T;H_0^{-\frac{1}{2}})$. Indeed, for any $t\in [0,T]$,
\begin{align}
\|m_u(t)\cdot \nabla u \|_{L^2(\T^2)}^2 & = \int_{\T^2} \sum_{j=1,2} \left| \sum_{i=1,2} m_u^i(t) \partial_i u^j(t)  \right|^2 \ud x \nonumber \\
& \leq \sup_{t\in [0,T]}|m_u(t)|^2 \sum_{i,j=1,2} \int_{\T^2} \left| \partial_i u^j(t)   \right|^2 \ud x. \nonumber 
\end{align} On the other hand, by Jensen's inequality and (\ref{eq:estimationuniformeNSg}), we get, for any $t\in [0,T]$,
\begin{align}
|m_u(t)|^2 & \leq \int_{\T^2} |u(t,x)|^2 \ud x \label{eq:meanuniforme} \\
& = \|u(t)\|_{L^2(\T^2)}^2 \leq 2e^T\left( M^2 + T(1 + \|j_{\ovl{f}}\|^2_{L^{\infty}_t L^2_x} ) \right). \nonumber
\end{align} Thus, we obtain
\begin{align}
& \int_0^T \| m_u(t)\cdot \nabla u\|_{L^2(\T^2)}^2 \ud t \label{eq:convectionmoyenne} \\
& \quad \quad \quad \quad \lesssim 2e^T\left( M^2 + T(1 + \|j_{\ovl{f}}\|_{L^{\infty}_t L^2_x}^2 \right) \int_0^T \|\nabla u (t) \|_{L^2(\T^2)}^2 \ud t \nonumber \\
& \quad \quad \quad \quad \lesssim 4e^{2T} \left( M^2 + T(1 + \|j_{\ovl{f}}\|_{L^{\infty}_t L^2_x}^2) \right)^2, \nonumber 
\end{align} thanks again to (\ref{eq:estimationuniformeNSg}). \par 
Hence, from (\ref{eq:T1T2T3}), we deduce that $F_{hom} \in L^2(0,T;H_0^{-\frac{1}{2}})$ and then, Theorem \ref{thm:Lerayregularite} entails
\begin{equation}
\hat{u} \in \C^0([0,T];H_0^{\frac{1}{2}}) \cap L^2(0,T;H_0^{\frac{3}{2}})
\end{equation} and gives that, for every $t\in [0,T]$,
\begin{align}
& \|\hat{u}(t)\|_{H^{\frac{1}{2}}_0}^2 + \int_0^t \| \nabla \hat{u}(s)\|_{H_0^{\frac{1}{2}}}^2 \ud s \nonumber \\
& \quad \quad \quad \leq exp\left( C\int_0^t \|\nabla \hat{u} \|_{L^2}^2 \ud s   \right) \left( \|\hat{u}_0 \|_{H_0^{\frac{1}{2}}}^2 + \int_0^t \|F_{hom}(s) \|^2_{H_0^{-\frac{1}{2}}} \ud s\right). \label{eq:estimateH32pourconclure}
\end{align} In addition, combining (\ref{eq:T1T2T3}), (\ref{eq:majorationT1}) and (\ref{eq:majorationT2}) and (\ref{eq:convectionmoyenne}), yields
\begin{align}
& \int_0^t \|F_{hom}(s) \|^2_{H_0^{-\frac{1}{2}}} \ud s  \nonumber \\
& \quad \quad \leq T \left( \| \mathcal{T}_1 + \mathcal{T}_2\|_{L^{\infty}(0,T;H^{-\frac{1}{2}})} \right) + \int_0^t \| \left( m_u(t) \cdot \nabla \right) u\|_{L^2(\T^2)}^2 \nonumber \\
& \quad \quad \leq C(T,M,\ovl{f}). \nonumber
\end{align} Consequently, injecting this in (\ref{eq:estimateH32pourconclure}) and using (\ref{eq:estimationuniformeNSg}), we find
\begin{equation}
\|\hat{u}(t)\|_{H^{\frac{1}{2}}_0}^2 + \int_0^t \| \nabla \hat{u}(s)\|_{H_0^{\frac{1}{2}}}^2 \ud s \leq e^{CT} C(T,M,\ovl{f}). \label{eq:normeH12bornee} 
\end{equation} Finally, using the injection $H^{\frac{3}{2}}(\T^2) \hookrightarrow L^{\infty}(\T^2)$, we deduce
\begin{align}
& \int_0^t \|u(s)\|_{L^{\infty}(\T^2)}^2 \ud s \leq C_S \int_0^t \|u(s)\|_{H^{\frac{3}{2}}(\T^2)}^2 \ud s \nonumber \\
& \quad \quad \quad \quad  \lesssim \int_0^t \sum_{k\in \Z^2} \left( 1 + |k|^2 \right)^{\frac{3}{2}} |u_k(s)|^2 \ud s \nonumber \\
& \quad \quad \quad \quad  \lesssim \int_0^t |m_u(s)|^2 \ud s + \int_0^t \sum_{k\in \Z^2 \setminus \left\{ 0 \right\}} |k|^3 |\hat{u}_k(s)|^2 \ud s  \nonumber \\
& \quad \quad \quad \quad  \lesssim T \|u\|_{L^{\infty}(0,T;L^2)} + \int_0^T \|\nabla \hat{u}(s)\|_{H_0^{\frac{1}{2}}(\T^2)}^2 \ud s \nonumber \\
& \quad \quad \quad \quad  \lesssim C(T,M,\ovl{f}), \nonumber
\end{align} thanks to (\ref{eq:meanuniforme}) and (\ref{eq:normeH12bornee}). This gives (\ref{eq:champuniforme}) with $K_1 \geq C(T,M,\ovl{f})$.\par

\vspace{0.5em}
\textit{2. $L^2_t \C^1_x$ regularity. }Let us show next (\ref{eq:champLipschitz}), thanks to the regularity properties of the Stokes system. Indeed, we may rewrite (\ref{eq:NavierStokesforg}) as
\begin{equation}
\left\{ \begin{array}{ll}
\partial_t u - \Delta u + \nabla p = F_{source}(t,x), & (t,x) \in \Omega_T, \\
\div u(t,x) = 0, & (t,x) \in \Omega_T, \\
u|_{t=0}= u_0, & x \in \T^2,
\end{array} \right.
\label{eq:Stokesbootstrap}
\end{equation} with 
\begin{equation}
F_{source}:= j_g - \rho_g u - \left( u \cdot \nabla \right) u.
\end{equation} According to the previous discussion, $j_g - \rho_gu \in L^2(0,T;L^2(\T^2))$. Consequently, we have to estimate the convection term $u\cdot \nabla u$. In order to to do so, we use the following argument. \par 

Let $r\in \N$ with $r\geq 2$. Then, by H\"older's inequality, and the Cauchy-Schwarz's inequality, we have
\begin{align}
\| \left( u(t)\cdot \nabla \right)u(t) \|_{L^r(\T^2)}^r  & \lesssim \int_{\T^2} \sum_{i,j=1,2} |u^i(t) \partial_i u^j(t) |^r \ud x \nonumber \\
& \lesssim \int_{\T^2} \left(  \sum_{i=1,2} |u^i(t)|^{2r} \right)^{\frac{1}{2}} \left(  \sum_{i,j=1,2} |\partial_i u^j(t)|^{2r} \right)^{\frac{1}{2}} \ud x \label{eq:convectionLr} \\ 
& \lesssim \left( \int_{\T^2} \sum_{i=1,2} |u^i(t)|^{2r} \ud x  \right)^{\frac{1}{2}} \left( \int_{\T^2}  \sum_{i,j=1,2} |\partial_i u^j(t)|^{2r}   \ud x  \right)^{\frac{1}{2}} \nonumber \\
& \lesssim \|u(t)\|_{L^{2r}(\T^2)}^r \| \nabla u(t) \|_{L^{2r}(\T^2)}^r, \nonumber
\end{align} for any $t\in [0,T]$. Then, choosing $r=2$ in the estimate above, the injection $H^{\frac{1}{2}}(\T^2) \hookrightarrow L^4(\T^2)$ (see \cite[p.81]{Chemin}), allows to deduce
\begin{align}
& \int_0^T \| \left( u(t)\cdot \nabla \right)u(t) \|_{L^2(\T^2)}^2 \ud t \nonumber \\
& \quad \quad \quad  \lesssim \int_0^T \|u(t)\|_{H^{\frac{1}{2}}(\T^2)}^2 \| \nabla u(t) \|_{H^{\frac{1}{2}}(\T^2)}^2 \ud t \nonumber \\
& \quad \quad \quad  \lesssim \sup_{t\in [0,T]} \|u(t)\|_{H^{\frac{1}{2}}(\T^2)}^2 \int_0^T \| \nabla u(t) \|_{H^{\frac{1}{2}}(\T^2)}^2 \ud t < \infty, \nonumber 
\end{align} thanks to (\ref{eq:normeH12bornee}). Thus, $F_{source} \in L^2(0,T;L^2(\T^2))$. \par 
Consequently, as $u_0$ is regular enough by (\ref{eq:massechampinitiale}), Theorem \ref{thm:Stokesclassical} with the choice $s=q=2$, yields
\begin{equation*}
u \in L^2(0,T;H^2(\T^2) \cap \V_{\sigma}) \quad \textrm{and} \quad \partial_t u \in L^2(0,T;\mathcal{H}). 
\end{equation*} This allows to deduce (\cite[Theorem II.5.13, p.101]{Boyer}) that 
\begin{equation}
u\in \C^0([0,T];\V_{\sigma}).
\label{eq:C0H1}
\end{equation} Let us perform next a bootstrap argument. Let us choose $r=3$ in (\ref{eq:convectionLr}), which allows to deduce
\begin{align*}
\int_0^T \| u\cdot \nabla u (t) \|_{L^3(\T^2)}^3 \ud t  & \lesssim \int_0^T \|u(t)\|^3_{L^6(\T^2)} \|\nabla u(t)\|^3_{L^6(\T^2)} \ud t \\
& \lesssim \| u \|^3_{L_t^{\infty} L^6_x} \int_0^T \| \nabla u(t)\|^3_{L^6(\T^2)} \ud t \\
& \lesssim \|u\|_{L_t^{\infty}H^1_x}^3 \|u\|_{L^2_t H^2_x}^3,
\end{align*} as $H^1(\T^2) \hookrightarrow L^p(\T^2)$ for any $p\geq 2$ (\cite[Th. 5.6.6, p. 270]{Evans}).   Thus, we deduce that $u \cdot \nabla  u \in L^2(0,T;L^3(\T^2))$. \par 
In addition, using points (c) and (a), we have
\begin{align*}
& j_g \in L^2(0,T;L^p(\T^2)), \quad \forall p \geq 2, \\
& \rho_g \in L^{\infty}(\Omega_T),  
\end{align*} which entails that
\begin{equation*}
j_g - \rho_g u \in L^2(0,T;L^3(\T^2)),
\end{equation*} as $u \in L^2(0,T;L^3(\T^2))$, thanks to (\ref{eq:C0H1}) and the Sobolev embedding. Then, applying Theorem \ref{thm:Stokesclassical} to system (\ref{eq:Stokesbootstrap}) with $s = 2, q=3$, as $u_0$ is regular enough, thanks to (\ref{eq:massechampinitiale}), we deduce that
\begin{equation*}
u \in L^2(0,T;W^{2,3}(\T^2)) \textrm{ and } \quad \partial_t u \in L^2(0,T;L^3(\T^2)).
\end{equation*} Finally, the injection $W^{2,3}(\T^2) \hookrightarrow \C^1(\T^2)$ (\cite[Th. 5.6.6, p. 270]{Evans}) gives (\ref{eq:champLipschitz}).
\end{proof}


\subsection{Absorption}
In order to describe the absorption procedure, we have to introduce some definitions (see \cite[p. 369]{Glass}). According to Definition \ref{definition:strip}, there exists $\delta_0>0$ such that $H_{2\delta_0} \subset \omega$. Let us choose
\begin{equation}
\delta := \min \left\{ \delta_0, \frac{1}{2}, e^{\frac{T}{200}} - 1, \frac{1}{4 K_1^2}  \right\},
\label{eq:choiceofdelta}   
\end{equation} where $K_1$ is given by (\ref{eq:champuniforme}). The choice of this parameter will be useful in Section \ref{sec: fixedpointrelevantVNS}. According to this choice of $\delta$, we set
\begin{eqnarray}
&& \gamma^{-}:= \left\{ (x,v) \in \partial H_{\delta} \times \R^2;  \,  v \cdot n_H^{ext} \leq -1   \right\}, \nonumber \\
&& \gamma^{2-}:= \left\{ (x,v) \in \partial H_{\delta} \times \R^2;  \, v \cdot n_H^{ext}  \leq - \frac{3}{2}   \right\}, \nonumber \\
&& \gamma^{3-}:= \left\{ (x,v) \in \partial H_{\delta} \times \R^2;  \, v \cdot n_H^{ext}  \leq - 2   \right\}, \label{eq:gamma3} \\
&& \gamma^{+}:= \left\{ (x,v) \in \partial H_{\delta} \times \R^2;  \, v \cdot n_H^{ext}  \geq 0    \right\}, \nonumber 
\end{eqnarray} where $n_H^{ext}$ is $\pm n_H$, taken in the outward direction with respect to $\partial H_{2\delta}$. It can be shown that 
\begin{equation}
\dist\left( [\partial H_{\delta} \times \R^2] \setminus \gamma^{2-}; \gamma^{3-}  \right) >0.
\label{eq:separationgamma}
\end{equation} Consequently, we may choose an absorption function $\A \in \C^{\infty} \cap W^{1,\infty} (\partial H_{\delta} \times \R^2;\R^+)$ such that 
\begin{eqnarray}
0 \leq \A(x,v)\leq 1, && \forall (x,v) \in \partial H_{\delta} \times \R^2, \label{eq:absortionfunction} \\
\A(x,v) = 1, && \forall (x,v) \in [\partial H_{\delta} \times \R^2 ]\setminus \gamma^{2-}, \nonumber  \\
\A(x,v) = 0, && \forall (x,v) \in \gamma^{3-}. \nonumber 
\end{eqnarray} We also choose a truncation function $\Y \in \C^{\infty}(\R^+;\R^+)$ satisfying
\begin{eqnarray}
\Y(t) = 0, && \forall t\in \left[0,\frac{T}{48}\right] \cup \left[\frac{47T}{48}, T \right],\nonumber \\
\Y(t) = 1, && \forall t\in \left[\frac{T}{24},\frac{23T}{24}\right].\nonumber 
\end{eqnarray} To give a sense to the procedure of absorption we need first the following result, which asserts that the number of times the characteristics associated to the Navier-Stokes velocity field of the previous part meet $\gamma-$ is finite.

\begin{lemma}
Let $g\in \S_{\epsilon}$ and let $u^g$ be given by (\ref{eq:NavierStokesforg}) accordingly. Let $(X^g,V^g)$ be the characteristics associated to the field $-v + u^g$. Then, for any $(x,v) \in \T^2 \times \R^2$, there exists $n(x,v) \in \N$ such that there exist $0 <t_1< \dots < t_{n(x,v)} <T$ such that
\begin{eqnarray}
&& \,\,\,  \left\{ (X^g,V^g)(t,0,x,v); \, t\in[0,T]   \right\} \cap \gamma^- = \left\{ t_i \right\}_{i=1}^{n(x,v)}, \label{eq:finitetimesofimpact}  \\
&& \, \,\, \exists s >0 \textrm{ s.t. } ( t_i - s, t_i +s) \cap (t_j - s, t_j + s) = \emptyset, \, \forall i,j =1, \dots, n(x,v), \label{eq:timesareisolated}
\end{eqnarray} with the convention that $n(x,v) = 0 $ and $\left\{ t_i \right\}_{i=1}^{n(x,v)} = \emptyset$ if $\left\{ (X^g,V^g)  \right\} \cap \gamma^- = \emptyset$.
\label{lemma:finitenumberofimpacts}
\end{lemma}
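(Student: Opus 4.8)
The plan is to reduce the whole statement to the one-dimensional motion of the characteristic in the direction normal to $H$, and to show that two consecutive passages through $\gamma^-$ are separated by a time bounded below by a constant depending only on $T$, $K_1$, $v$ and the geometry. Finiteness of $n(x,v)$ and the isolation property (\ref{eq:timesareisolated}) then follow at once on the compact interval $[0,T]$.

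First I would lift. Since $H$ is closed in $\T^2$, its preimage under $s$ is the family of parallel lines $\{x\cdot n_H\in\ell\Z\}$, where $\ell>0$ is the spacing between successive lines, so that $4\delta<\ell$ by Definition \ref{definition:strip}. Lifting the continuous curve $t\mapsto X^g(t,0,x,v)$ to $\R^2$ and setting $y(t):=\tilde X^g(t)\cdot n_H$, the strip $H_\delta$ becomes $y\in\bigcup_k[k\ell-\delta,k\ell+\delta]$, with gaps of width $\ell-2\delta>2\delta$ in between. By (\ref{eq:characteristics}), $y$ is continuously differentiable in time with $\dot y(t)=V^g(t)\cdot n_H$, and a passage through $\gamma^-$ at a time $t$ means exactly that $y(t)\in\ell\Z\pm\delta$ with $\dot y$ pointing inward and $|\dot y(t)|\ge 1$: either a \emph{top} boundary $k\ell+\delta$ with $\dot y(t)\le-1$, or a \emph{bottom} boundary $k\ell-\delta$ with $\dot y(t)\ge 1$. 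In particular every passage is transversal, since $|\dot y|\ge 1\ne 0$ there, so the set $\mathcal E$ of passage times contains no interval.

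Next I would record two quantitative bounds, both coming from (\ref{eq:characteristics}) and from $\|u^g\|_{L^2_tL^\infty_x}\le K_1$ (the preceding Proposition, (\ref{eq:champuniforme})). Writing $V^g(t)=e^{-t}v+\int_0^t e^{\tau-t}u^g(\tau,X^g)\,\ud\tau$ and using Cauchy--Schwarz in time gives the velocity bound $|\dot y(t)|\le|V^g(t)|\le|v|+\sqrt T\,K_1=:B$ for all $t$. Differentiating, $\ddot y=(-V^g+u^g(t,X^g))\cdot n_H$, so $|\ddot y(t)|\le a(t):=B+\|u^g(t)\|_{L^\infty}$ with $\|a\|_{L^2(0,T)}\le B\sqrt T+K_1=:A<\infty$. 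The key dichotomy is then, for two consecutive passage times $t_i<t_{i+1}$: either $\dot y$ changes sign on $(t_i,t_{i+1})$ or it does not. If it does, there is $\theta\in(t_i,t_{i+1})$ with $\dot y(\theta)=0$, and since $|\dot y(t_{i+1})|\ge 1$, absolute continuity of $\dot y$ and Cauchy--Schwarz give $1\le\big|\int_\theta^{t_{i+1}}\ddot y\,\ud\tau\big|\le(t_{i+1}-\theta)^{1/2}A$, hence $t_{i+1}-t_i\ge 1/A^2$. If $\dot y$ keeps a constant sign, then $y$ is monotone on $[t_i,t_{i+1}]$, so $y(t_i)$ and $y(t_{i+1})$ are two \emph{distinct} boundaries of the same type (both top if $\dot y<0$, both bottom if $\dot y>0$), whence $|y(t_{i+1})-y(t_i)|\ge\ell$, and the velocity bound yields $\ell\le B(t_{i+1}-t_i)$, i.e. $t_{i+1}-t_i\ge\ell/B$. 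Setting $s_*:=\min\{1/A^2,\ell/B\}>0$, any two consecutive passage times are at least $s_*$ apart.

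Finally, this gap bound forces $\mathcal E$ to be finite: it is closed, being the preimage of the closed set $\gamma^-$ (see (\ref{eq:gamma3})) under a continuous map, and no two of its points lie within $s_*$ of one another, so a greedy enumeration starting from $\min\mathcal E$ produces at most $T/s_*+1$ points, which I order as in (\ref{eq:finitetimesofimpact}), discarding the endpoints $0,T$ if present. Taking $s:=s_*/3$ makes the intervals $(t_i-s,t_i+s)$ pairwise disjoint, which is exactly (\ref{eq:timesareisolated}). I expect the only genuinely delicate point to be the turning-point estimate: it is essential that $\|u^g(t)\|_{L^\infty}$ be square-integrable in time, since mere $L^1$ integrability would let the normal acceleration concentrate and would not give a uniform lower bound on $t_{i+1}-t_i$. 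The monotone case, by contrast, is purely geometric and uses only $4\delta<\ell$ together with the velocity bound.
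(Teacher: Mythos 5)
Your argument is correct, and it is worth saying up front that the paper does not actually prove this lemma: it only points to \cite{Glass} and \cite{GDHK1} with the remark that the friction case requires no modification. So there is no in-paper proof to match against; what you have written is a self-contained substitute, and it is the right one. The reduction to the scalar quantity $y(t)=\tilde X^g(t)\cdot n_H$ on the universal cover, the observation that a hit of $\gamma^-$ forces $|\dot y|\ge 1$ with a definite orientation (top boundary iff $\dot y\le -1$, bottom iff $\dot y\ge 1$), and the dichotomy ``$\dot y$ vanishes between two hits (turning-point estimate via $\ddot y=(-V^g+u^g)\cdot n_H$ and Cauchy--Schwarz) versus $\dot y$ never vanishes (monotonicity forces two distinct lines of the same family, hence a displacement $\ge\ell$)'' together give a uniform gap $s_*>0$ between \emph{any} two hit times, which yields both (\ref{eq:finitetimesofimpact}) and (\ref{eq:timesareisolated}) at once. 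This is essentially the mechanism behind the cited results, but your version is adapted to the two features specific to this paper: the friction term (handled by the explicit formulae (\ref{eq:characteristics}) and the bound $|V^g|\le |v|+\sqrt T K_1$) and the fact that $u^g$ is only controlled in $L^2_tL^\infty_x$ via (\ref{eq:champuniforme}), which is exactly what makes the turning-point estimate quantitative.

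Two small remarks, neither of which is a gap. First, your closing comment that $L^1$ integrability of $\|u^g(t)\|_{L^\infty}$ would not suffice is overstated: for a \emph{fixed} $g$, absolute continuity of the Lebesgue integral still gives $\eta>0$ with $\int_E a<1$ whenever $|E|<\eta$, hence a positive (if inexplicit) lower bound on the gap; what $L^2$ buys is only the explicit constant $1/A^2$. Second, the lemma as stated asks for $0<t_1<\dots<t_{n}<T$, and ``discarding the endpoints if present'' would strictly speaking break the set equality in (\ref{eq:finitetimesofimpact}); this is a looseness already present in the paper's formulation and is harmless for the way the lemma is used, but it deserves a sentence rather than a parenthesis.
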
 For more details on this result, see \cite[p.348]{Glass} and \cite[p.5468]{GDHK1}. In the friction case, this holds true without further modification. \par

The previous lemma allows to define the following quantities. Let $f_0 \in \C^1(\T^2 \times \R^2)$ and let $(x,v) \in \T^2\times \R^2$. Then, for every $t_i, $ with $ i = 1,\cdots,n(x,v)$, we have $(\tilde{x},\tilde{v})=(X^g,V^g)(t_i,0,x,v) \in \gamma^-$. Moreover, let 
\begin{eqnarray}
&& f(t^-,\tilde{x},\tilde{v}) = \lim_{t\rightarrow t_i^-} f_0((X^g,V^g)(0,t,x,v)), \label{eq:fminus} \\
&& f(t^+,\tilde{x},\tilde{v}) = \lim_{t\rightarrow t_i^+} f_0((X^g,V^g)(0,t,x,v)). \label{eq:fplus}
\end{eqnarray}

We define $f:=\tilde{\V}_{\epsilon}[g]$ to be the solution of 
\begin{equation}
\left\{ \begin{array}{ll}
\d_t f + v\cdot \nabla_x f + u^g \cdot \nabla_v f - \div_v(vf) = 0, & (t,x,v) \in [0,T]\times [\T^2 \times \R^2]\setminus \gamma^{2-}, \\
f(0,x,v) = f_0(x,v), &  (x,v) \in \T^2 \times \R^2, \\
f(t^+,x,v) = (1 - \Y(t))f(t^-,x,v) + \Y(t)\A(x,v) f(t^-,x,v), & (t,x,v) \in [0,T] \times \gamma^-. 
\end{array} \right.
\label{eq:absortion}
\end{equation} Let us explain how the absorption procedure works. From (\ref{eq:champLipschitz}), the characteristics associated to the field $-v + u^g$ are regular. Thus, outside $\omega$, the system above defines a function $\tilde{\V}_{\epsilon}[g]$ of class $\C^1$. Moreover, the exact value of $\tilde{\V}_{\epsilon}[g]$ is given by these characteristics through (\ref{eq:explicitsolutionVNS}) and (\ref{eq:characteristics}). When the characteristics $(X^g, V^g)$ meet $\gamma^- $ at time $t$, $f(t^+,\cdot,\cdot)$ is fixed according to the last equation in (\ref{eq:absortion}). We can see the function $\Y (t) \A(x,v)$ as an opacity factor depending on time and on the incidence of the characteristics on $\partial H_{\delta}$. Indeed, $f(t^+,\cdot,\cdot)$ can take values varying from $f(t^-,\cdot,\cdot)$, in the case of no absorption, to $0$, according to the angle of incidence, the modulus of the velocity and time.
\par


\subsection{Extension} The function $\tilde{\V}_{\epsilon}[g]$ is not necessarily continuous around $[0,T] \times \gamma^{-} \subset [0,T] \times H_{\delta}$. To avoid this problem we shall use some extension operators preserving regularity. \par 
Let us first consider a linear extension operator 
\begin{equation*} 
\ovl{\pi}:\C^0(\T^2\setminus H_{\delta}) \rightarrow \C^0(\T^2),
\end{equation*} such that for any $\sigma \in (0,1)$, a $\C^{0,\sigma}$ function is mapped onto a $\C^{0,\sigma}$ function. This allows to define another linear extension operator by
\begin{equation*}
\begin{array}{cccc}
\pi : & \C^0([0,T] \times [\T^2 \setminus H_{\delta} ] \times \R^2 ) & \rightarrow & \C^0([0,T]\times \T^2 \times \R^2) \\
             & f & \mapsto & \pi f(t,x,v) = \ovl{\pi}\left[ f(t,\cdot, v)\right](x).
\end{array}
\end{equation*} Thus, $\pi$ is an extension satisfying the following properties: for every $f \in \C^0([0,T] \times (\T^2 \setminus H_{\delta}) \times \R^2)$, we have 
\begin{align}
&  \exists C_{\pi}>0 \textrm{ such that } \label{eq:extensioninfty} \\ 
&\|(1+|v|)^{\gamma+2} \pi(f) \|_{L^{\infty}(Q_T)} \leq C_{\pi}\|(1+|v|)^{\gamma +2} f \|_{L^{\infty}([0,T]\times (\T^2\setminus H_{\delta}) \times \R^2)}, \nonumber  \\
& \forall \sigma \in (0,1), \, \exists C_{\pi,\sigma}>0 \textrm{ such that } \label{eq:extensionHolder} \\ 
&  \|\pi(f) \|_{\C^{0,\sigma}(Q_T)} \leq C_{\pi,\sigma}\|(1+|v|)^{\gamma +2} f \|_{\C^{0,\sigma}([0,T]\times (\T^2\setminus H_{\delta}) \times \R^2)}. \nonumber 
\end{align} We introduce another truncation in time. Let $\tilde{\Y} \in \C^{\infty}(\R^+;[0,1])$ such that
\begin{equation}
\begin{array}{ll}
\tilde{\Y}(t) =0, & t\in \left[0,\frac{T}{100}\right], \\
\tilde{\Y}(t) =1, & t\in \left[\frac{T}{48},T\right].
\end{array} 
\end{equation} Finally, we set
\begin{equation}
\begin{array}{cccc}
\Pi: & \C^0([0,T]\times (\T^2 \setminus H_{\delta}) \times \R^2) & \rightarrow & \C^0([0,T]\times \T^2 \times \R^2), \\
& f & \mapsto & \Pi f = (1 - \tilde{\Y}(t))f + \tilde{\Y}(t) \pi f.
\end{array}
\label{eq:definitionofpi}
\end{equation} This allows to define the fixed point operator by
\begin{equation}
\V_{\epsilon}[g]:= \ovl{f} + \Pi\left( \tilde{\V}_{\epsilon}[f]_{|([0,T]\times (\T^2 \setminus H_{\delta}) \times \R^2)\cup([0,\frac{T}{48}] \times \T^2 \times \R^2)} \right), 
\label{eq:fixedpointoperator}
\end{equation} for every $(t,x,v) \in [0,T] \times \T^2 \times \R^2$.

\subsection{Existence of a fixed point}

The goal of this section is to prove the following result. 

\begin{proposition}
Let $T>T_0$, where $T_0>0$ is given by Proposition \ref{proposition:referencetrajectory} and let $c_1,c_2,c_3$ be large enough positive constants. Then, there exists $\epsilon_0>0$ such that, for any $\epsilon \leq \epsilon_0$, the operator defined by (\ref{eq:fixedpointoperator}) in the domain $\mathscr{S}_{\epsilon}$ defined by (\ref{eq:domainoftheoperator}) has a fixed point $g^*\in \mathscr{S}_{\epsilon}$. Furthermore, if $u^{g^*}$ denotes the solution of (\ref{eq:NavierStokesforg}) associated to $g^*$, the pair $(g^*,u^{g^*})$ is a strong solution of (\ref{eq:VNS}), with initial data $f_0$ and $u_0$, for a certain source term $G\in \C^0([0,T]\times \T^2\times \R^2)$.  
\label{proposition:fixedpoint}
\end{proposition}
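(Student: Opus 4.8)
The plan is to obtain $g^*$ by Schauder's fixed-point theorem applied to $\V_\epsilon$ on $\S_\epsilon$, viewed as a subset of the Banach space of bounded continuous functions $\C^0(Q_T)$ with the sup-norm, and then to read off the strong solution and its control directly from the fixed-point identity. First I would check that $\S_\epsilon$ is convex: each of the constraints \textbf{(a)}--\textbf{(c)} is a sublevel set of a convex functional of $g$, since $g\mapsto\rho_g$ and $g\mapsto\ovl f-g$ are affine and norms are convex. The structural point is that $\S_\epsilon$ is in fact \emph{compact} in $\C^0(Q_T)$: constraint \textbf{(c)} gives a uniform $\C^{0,\delta_2}$ bound, hence equicontinuity, while constraint \textbf{(b)} gives the uniform decay $|g(t,x,v)|\lesssim(1+|v|)^{-(\gamma+2)}$ needed to compensate the noncompactness of $\R^2_v$; an Arzel\`a--Ascoli argument adapted to decay at infinity in $v$ yields precompactness, and lower semicontinuity of the Hölder seminorm and of the weighted sup-norm under uniform convergence shows $\S_\epsilon$ is closed. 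Since $\S_\epsilon$ is nonempty (already observed in the text), it is a nonempty convex compact subset of $\C^0(Q_T)$, so it suffices to prove that $\V_\epsilon$ is well defined on $\S_\epsilon$ (which holds for $\epsilon\le\epsilon_0$ by Proposition \ref{proposition:existenceNSg} and the subsequent regularity (\ref{eq:champLipschitz})), maps $\S_\epsilon$ into itself, and is continuous for the $\C^0$ topology.

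The self-mapping property is the technical heart. Writing $\V_\epsilon[g]-\ovl f=\Pi(\tilde\V_\epsilon[g]|_{\cdots})$ and invoking the two continuity properties (\ref{eq:extensioninfty})--(\ref{eq:extensionHolder}) of the extension, one reduces \textbf{(b)} and \textbf{(c)} to weighted $L^\infty$ and $\C^{0,\delta_2}$ estimates for $\tilde\V_\epsilon[g]$ on $[0,T]\times(\T^2\setminus H_\delta)\times\R^2$. These I would derive from the representation (\ref{eq:explicitsolutionVNS}) along the characteristics of $-v+u^g$, which carry absorption factors bounded by $1$. For \textbf{(b)}, the factor $e^{2t}$ and the growth $|V(0,t,x,v)|\sim e^t|v|$ of the backward velocity are absorbed into $T$-dependent constants, giving $\|(1+|v|)^{\gamma+2}\tilde\V_\epsilon[g]\|_{L^\infty}\lesssim_T\|(1+|v|)^{\gamma+2}f_0\|_{L^\infty}$. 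For \textbf{(c)}, I would interpolate the global $L^\infty$ bound against the Lipschitz bound coming from Lemma \ref{lemma:Gronwall} (whose constant grows like $(1+|v|)$) and the gradient decay (\ref{eq:conditionforuniqueness}) of $f_0$; the value $\delta_2=(\gamma+2)/(\gamma+3)$ is exactly the exponent at which the $(1+|v|)$-growth of the modulus of continuity is balanced by the $(1+|v|)^{-\gamma}$ decay of $\nabla f_0$. Constraint \textbf{(a)} then follows by integrating these pointwise bounds against $\ud v$ (finite since $\gamma>2$), using $\rho_{\ovl f}=0$ from (\ref{eq:Z1integral})--(\ref{eq:Z2integral}); the loss of exponent incurred by integrating the $(1+|v|)$-weighted modulus of continuity produces the smaller $\delta_1=\gamma/(2(\gamma+3))$, and the smallness (\ref{eq:smalldata}) of $f_0$ delivers the bound $c_3\epsilon$. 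Choosing $c_1,c_2,c_3$ large and then $\epsilon_0$ small, compatibly with (\ref{eq:epsilon0}), closes the three inequalities.

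For continuity I would follow the chain $g\mapsto u^g\mapsto(X^g,V^g)\mapsto\tilde\V_\epsilon[g]\mapsto\V_\epsilon[g]$. If $g_n\to g$ in $\C^0$, dominated convergence (using the uniform weight of \textbf{(b)}) gives $j_{g_n}\to j_g$ and $\rho_{g_n}\to\rho_g$, and the Navier-Stokes stability estimate (Theorem \ref{thm:Leraystabilite}) together with the regularity (\ref{eq:champLipschitz}) yields convergence of $u^{g_n}$ strong enough for the associated flows to converge by Gronwall; the smoothness of the cutoffs $\A,\Y,\tilde\Y$ and of the linear extension $\ovl\pi$ then propagates this to $\V_\epsilon[g_n]\to\V_\epsilon[g]$ in $\C^0$. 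The delicate point is continuity through the absorption rule (\ref{eq:absortion}): one must verify that the crossing times of $\gamma^-$ depend continuously on the data, which rests on the transversality encoded in (\ref{eq:separationgamma}) and on the finiteness in Lemma \ref{lemma:finitenumberofimpacts}. Schauder's theorem then furnishes a fixed point $g^*=\V_\epsilon[g^*]\in\S_\epsilon$.

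Finally, setting $f:=g^*$ and $u:=u^{g^*}$, the fluid equation of (\ref{eq:VNS}) holds by the very construction of $u^{g^*}$ through (\ref{eq:NavierStokesforg}). I would then define $G:=\d_t f+v\cdot\nabla_x f+\div_v[(u-v)f]$ and show $\supp G\subset\omega$: outside $H_\delta$ the extension acts as the identity and no absorption occurs, so there $g^*=\ovl f+\tilde\V_\epsilon[g^*]$ with $\tilde\V_\epsilon[g^*]$ solving the transport equation driven by $u^{g^*}$; the residual then collapses to $\div_v[(u^{g^*}-\ovl u)\ovl f]$, which vanishes outside $\omega$ because $\supp\ovl f\subset(0,T)\times\omega\times\R^2$. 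Continuity of $G$ follows from the $\C^1$ regularity of $f_0$ transported along the $\C^1$-in-$x$ field $u^{g^*}$ away from $\omega$, and from the smoothness of $\ovl f$, the cutoffs and the extension inside $\omega$. This exhibits $(g^*,u^{g^*})$ as a strong solution of (\ref{eq:VNS}) with a control $G\in\C^0([0,T]\times\T^2\times\R^2)$ supported in $\omega$, as claimed. I expect the self-mapping Hölder estimates, and in particular the $v$-weighted interpolation fixing $\delta_1$ and $\delta_2$, to be the main obstacle.
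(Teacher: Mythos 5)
Your proposal follows essentially the same route as the paper: Leray--Schauder on the nonempty convex compact set $\S_\epsilon\subset\C^0(Q_T)$, with the self-mapping property reduced via the extension operator $\Pi$ to a weighted $L^\infty$ bound and a $(1+|v|)$-weighted Lipschitz bound for $\tilde\V_\epsilon[g]$ along the characteristics of $-v+u^g$, interpolated with exactly the exponents that produce $\delta_1$ and $\delta_2$ to close \textbf{(a)}--\textbf{(c)}, and continuity handled along the chain $g\mapsto u^g\mapsto(X^g,V^g)\mapsto\V_\epsilon[g]$ as in Glass's argument. Your explicit verification at the end that the residual $G$ is continuous and supported in $\omega$ (using $\supp\ovl{f}\subset(0,T)\times\omega\times\R^2$) is in fact spelled out in more detail than in the paper, which leaves this final identification implicit.
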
 We shall carry out the proof of this result in several steps. The main idea is to apply the Leray-Schauder fixed-point theorem. To do this, we have to verify that 
\begin{enumerate}
\item The set $\S_{\epsilon}$ is convex and compact in $\C^0(Q_T)$,
\item $\V_{\epsilon}: \S_{\epsilon} \subset \C^0(Q_T) \rightarrow \C^0(Q_T) $ is continuous,
\item $\V_{\epsilon}(\S_{\epsilon}) \subset \S_{\epsilon}$.
\end{enumerate} 

The first point is straightforward, since the convexity of $\S_{\epsilon}$ is clear and the compactness is a consequence of Ascoli's theorem. The second point is similar to \cite[Section 3.3]{Glass} and holds without further modification, thanks to Lemma \ref{lemma:finitenumberofimpacts} and Lemma \ref{lemma:Gronwall}. \par

We need to show that point (3) holds. In other words, we have to prove that, for any $g \in \S_{\epsilon}$, $\V_{\epsilon}[g] \in \S_{\epsilon}$, i.e, points (a)--(c).

\subsubsection{Proof of point (b)} 
At this stage, we shall need the following property for the backwards characteristics associated to $-v + u^g$.

\begin{lemma}
Let $g \in \S_{\epsilon}$ and let $(X^g,V^g)$ be the characteristics associated to the field $-v + u^g$, according to (\ref{eq:NavierStokesforg}) and Proposition \ref{proposition:existenceNSg}. Then, there exists a constant $K_2=K_2(T,\gamma)>0$, independent of $g$, such that
\begin{equation}
\left|e^t|v| - |V^g(0,t,x,v)| \right| \leq  K_2,
\label{eq:pointb}
\end{equation} for any $(t,x,v) \in [0,T] \times \T^2 \times \R^2$. 
\label{lemma:b}
\end{lemma}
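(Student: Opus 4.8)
The plan is to work directly from the explicit representation of the velocity characteristic given in equation~\eqref{eq:characteristics}. Recall that the backward characteristic $V^g$ solves the same ODE, so evaluating \eqref{eq:characteristics} with the appropriate endpoints yields
\begin{equation*}
V^g(0,t,x,v) = e^{t}v + \int_0^t e^{\tau} u^g(\tau, X^g(\tau,t,x,v)) \ud \tau,
\end{equation*}
where I have used that the time integration runs from $t$ down to $0$, producing the growing exponential $e^{t}$ in front of $v$ (consistent with the friction being run backwards). The quantity I must control is therefore
\begin{equation*}
\left| e^t|v| - |V^g(0,t,x,v)| \right| \leq \left| e^t v - V^g(0,t,x,v) \right| = \left| \int_0^t e^{\tau} u^g(\tau, X^g(\tau,t,x,v)) \ud \tau \right|,
\end{equation*}
by the reverse triangle inequality. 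So the whole statement reduces to bounding this integral uniformly in $g$, $x$, $v$ and $t \in [0,T]$.

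First I would estimate the integral by pulling out the spatial sup-norm of $u^g$ and using $e^{\tau}\le e^{T}$ on $[0,T]$, giving
\begin{equation*}
\left| \int_0^t e^{\tau} u^g(\tau, X^g(\tau,t,x,v)) \ud \tau \right| \leq e^{T} \int_0^T \|u^g(\tau)\|_{L^{\infty}(\T^2)} \ud \tau \leq e^{T} \sqrt{T}\, \|u^g\|_{L^2(0,T;L^{\infty}(\T^2))},
\end{equation*}
where the last step is Cauchy--Schwarz in time. At this point I invoke the key estimate \eqref{eq:champuniforme} from the preceding proposition, namely $\|u^g\|_{L^2(0,T;L^{\infty}(\T^2))} \leq K_1$ with $K_1 = K_1(T,M,\ovl{f})$ independent of $g$. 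This yields the bound with $K_2 := e^{T}\sqrt{T}\,K_1$, which depends only on $T$, $M$ and $\ovl{f}$; since the dependence on $M$ and $\ovl f$ is fixed once the data and reference trajectory are chosen, this is consistent with the claimed dependence $K_2 = K_2(T,\gamma)$. The point worth underlining is that the smallness parameter $\epsilon$ and the particular $g\in\S_\epsilon$ enter only through $u^g$, and \eqref{eq:champuniforme} has already absorbed all $g$-dependence into the single constant $K_1$, so uniformity in $g$ is automatic.

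I expect no real obstacle here: the argument is essentially the reverse triangle inequality followed by an $L^2_t L^\infty_x$ bound on the fluid velocity that has already been established. The only point requiring mild care is the bookkeeping of the exponential weight and the integration endpoints in the backward flow, to be sure the factor is $e^{\tau}$ bounded by $e^{T}$ rather than a decaying one; this is where running the characteristic backward in time (from $t$ to $0$) matters. A secondary subtlety is reconciling the stated dependence $K_2=K_2(T,\gamma)$ with the apparent dependence on $M$ and $\ovl f$ coming from \eqref{eq:champuniforme}; this is harmless because $M$ is a fixed admissible bound on the data and $\ovl f$ is the fixed reference trajectory, so $K_2$ is genuinely a constant once the geometric and data parameters are frozen. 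Everything else is a one-line application of Cauchy--Schwarz and the previously proven velocity estimate.
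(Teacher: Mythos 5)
Your proof is correct and follows essentially the same route as the paper: reverse triangle inequality, the explicit formula (\ref{eq:characteristics}) for $V^g(0,t,x,v)$, and then Cauchy--Schwarz in time together with the uniform bound (\ref{eq:champuniforme}) to get $K_2 = C(T)K_1$. The only cosmetic discrepancy is the orientation of the time integral in your explicit formula (it should run from $t$ to $0$, introducing a sign), which is harmless since absolute values are taken, and your remark reconciling the dependence on $M$ and $\ovl{f}$ with the stated $K_2(T,\gamma)$ matches what the paper itself does.
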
 

\begin{proof}
By (\ref{eq:characteristics}), we have
\begin{align}
& \left|e^t|v| - |V^g(0,t,x,v)| \right| \nonumber \\
& \quad \quad \quad \leq \left| V^g(0,t,x,v) - e^tv   \right| \nonumber \\
& \quad \quad \quad = \left| \int_0^t e^{t-s} u^g(s, X^g(0,s,x,v)) \ud s \right| \nonumber \\
& \quad \quad \quad \leq C(T) \int_0^t \|u^g(s)\|_{L^{\infty}(\T^2)} \ud s \nonumber \\
& \quad \quad \quad \leq C(T) \|u^g\|_{L^2(0,T;L^{\infty}(\T^2))} \leq C(T)K_1, \nonumber
\end{align} using (\ref{eq:champuniforme}). This allows to conclude, choosing $K_2\geq C(T) K_1(T,M,\ovl{f})$.
\end{proof}

By construction of $\V_{\epsilon}$, we have 
\begin{align}
& \|(1+|v|)^{\gamma+2}\left( \V_{\epsilon}[f] - \ovl{f} \right) \|_{L^{\infty}(Q_T)} \nonumber \\
& \quad \quad = \left\|(1+|v|)^{\gamma+2} \Pi\left( \tilde{\V_{\epsilon}}[g]_{ |\left([0,T]\times (\T^2 \setminus H_{\delta}) \times \R^2 \cup [0,\frac{T}{48}] \times \T^2 \times \R^2 \right)}  \right)  \right\|_{L^{\infty}(Q_T)} \label{eq:majorationb} \\
& \quad \quad   \leq C_{\pi} \left\| (1+|v|)^{\gamma+2} \tilde{\V_{\epsilon}}[g]  \right\|_{L^{\infty}(Q_T)}, \nonumber 
\end{align} where we have used (\ref{eq:extensioninfty}). Moreover, by (\ref{eq:absortion}) and (\ref{eq:absortionfunction}), 
\begin{equation*}
|f(t^+,x,v)| \leq |f(t^-,x,v)|,
\end{equation*} which implies, through (\ref{eq:explicitsolutionVNS}),
\begin{equation*}
|\tilde{\V}_{\epsilon}[g](t,x,v)| \leq \left| e^{2t} f_0 \left( (X^g,V^g)(0,t,x,v) \right)  \right|.
\end{equation*} On the other hand,
\begin{align}
& \left|f_0\left( (X^g,V^g)(0,t,x,v) \right)\right| \nonumber \\
& =  \left( \frac{1+|V^g(0,t,x,v)|}{1+|V^g(0,t,x,v)|}  \right)^{\gamma+2} \left| f_0\left( (X^g,V^g)(0,t,x,v) \right)   \right| \nonumber \\
&\leq  \frac{\|(1+|v|)^{\gamma +2} f_0 \|_{L^{\infty}(Q_T)}}{ \left(  1 + |V^g(0,t,x,v)|  \right)^{\gamma+2}} \label{eq:majorationVg} \\
& = \frac{\|(1+|v|)^{\gamma +2} f_0 \|_{L^{\infty}(Q_T)}}{\left( 1 + \left[e^t|v| - (e^t|v| - |V^g(0,t,x,v)|) \right]  \right)^{\gamma+2}}  \nonumber \\
& \leq  \frac{ \left( 1 + \left| e^t|v| - |V^g(0,t,x,v)| \right| \right)^{\gamma+2} \|(1+|v|)^{\gamma +2} f_0 \|_{L^{\infty}(Q_T)} }{(1 + e^t|v|)^{\gamma +2}} \nonumber \\
& \leq  \frac{(1 + K_2(T,\gamma))^{\gamma + 2}  \|(1+|v|)^{\gamma +2} f_0 \|_{L^{\infty}(Q_T)}}{(1 + e^t|v|)^{\gamma+2}}, \nonumber
\end{align} where we have used (\ref{eq:pointb}) and the inequality (see \cite[Eq. (3.33), p. 347]{Glass}. 
\begin{equation}
\frac{1}{1+ |x-x'|} \leq \frac{1+|x'|}{1+|x|}, \quad  \forall x,x'\in \R^2.
\label{eq:inegalitequoitient}
\end{equation} Furthermore, since 
\begin{equation*}
(1+|v|)^{\gamma +2} |\tilde{\V_{\epsilon}}[g](t,x,v)| \leq (1 + e^t|v|)^{\gamma+2} |\tilde{\V_{\epsilon}}[g](t,x,v)|, 
\end{equation*} for every $(t,x,v)\in [0,T] \times \T^2 \times \R^2$, we have
\begin{align}
& \|(1+|v|)^{\gamma+2} \tilde{\V}_{\epsilon}[g] \|_{L^{\infty}(Q_T)} \label{eq:vtildepointb} \\ 
& \quad \quad \quad \quad \quad \leq e^{2T}\left( 1 + K_2(T,\gamma)\right)^{\gamma+2} \left(\|f_0\|_{\C^1} + \|(1+|v|)^{\gamma+2} f_0\|_{L^{\infty}} \right). \nonumber
\end{align} This gives that $\V_{\epsilon}[g]$ satisfies point (b), thanks to (\ref{eq:majorationb}) and choosing
\begin{equation}
c_1 \geq C_{\pi} e^{2T}\left( 1 + K_2(T,\gamma)\right)^{\gamma+2}.
\label{eq:choiceofc1}
\end{equation}


\subsubsection{Proof of point (c)} We need the following technical result, which can be adapted from \cite[Lemma 2, p. 347]{Glass}, thanks to Lemma \ref{lemma:Gronwall} and (\ref{eq:champLipschitz}).

\begin{lemma}
For any $g\in \S_{\epsilon}$, one has $\tilde{\V}_{\epsilon}[g] \in \C^1(Q_T \setminus \Sigma_T)$, with $\Sigma_T := [0,T] \times \gamma^-$. Moreover, there exists a constant $K_3=K_3(\gamma,\omega)>0$ such that
\begin{equation*}
\frac{\left| \tilde{\V}_{\epsilon}[g](t,x,v) - \tilde{\V}_{\epsilon}[g](t',x',v')  \right|}{(1+|v|)|(t,x,v) - (t',x',v')|} \leq K_3(\|f_0\|_{\C^1(\T^2\times \R^2)} + \| (1+|v|)^{\gamma+2} f_0 \|_{L^{\infty}(Q_T)}),
\end{equation*} for any $(t,x,v),(t',x',v') \in [0,T]\times (\T^2 \setminus \omega) \times \R^2$ with $|v-v'|<1$. Furthermore, if $f_0$ satisfies (\ref{eq:conditionforuniqueness}), we also have
\begin{eqnarray}
&&\|(1+|v|)^{\gamma+1}\nabla_{x,v}\tilde{\V}_{\epsilon}[g]\|_{L^{\infty}} \nonumber \\
&& \quad \quad \quad \quad  \leq K_4 \left( \|(1+|v|)^{\gamma+1}\nabla_{x,v}f_0 \|_{L^{\infty}(Q_T)} + \| (1+|v|)^{\gamma + 2} f_0 \|_{L^{\infty}} \right), \label{eq:estimeeunicite}
\end{eqnarray} for some constant $K_4=K_4(\kappa, g)>0$.
\label{lemma:pointc}
\end{lemma}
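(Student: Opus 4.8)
The plan is to establish Lemma \ref{lemma:pointc} by differentiating the explicit solution formula (\ref{eq:explicitsolutionVNS}) along backward characteristics and controlling the derivatives of the flow through a Gronwall-type argument, closely following \cite[Lemma 2, p.~347]{Glass} but accounting for the friction term and the low regularity $u^g \in L^2_t\C^1_x$ rather than $\C^0_t\C^1_x$.

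\medskip

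First I would establish the $\C^1$ regularity of $\tilde{\V}_{\epsilon}[g]$ away from $\Sigma_T$. Away from the absorption set $\gamma^-$, the function is given exactly by $\tilde{\V}_{\epsilon}[g](t,x,v) = e^{2t} f_0((X^g,V^g)(0,t,x,v))$ via (\ref{eq:explicitsolutionVNS}), so its regularity is inherited from that of $f_0$ (which is $\C^1$) composed with the backward flow. By (\ref{eq:champLipschitz}) we have $u^g \in L^2(0,T;\C^1(\T^2))$, which by the remark following (\ref{eq:characteristcsystem}) places us in the setting $u^g \in L^1_t\C^{0,1}_x$ where the characteristics are well-defined, differentiable in time, and—because $u^g(s,\cdot)$ is genuinely $\C^1$ for a.e.\ $s$—differentiable in $(x,v)$ as well. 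The only loss of continuity occurs across $\Sigma_T$, where the boundary condition in (\ref{eq:absortion}) multiplies by the opacity factor $\Y(t)\A(x,v)$; on each connected component of $Q_T\setminus\Sigma_T$ the formula is smooth in the data, giving $\tilde{\V}_{\epsilon}[g]\in\C^1(Q_T\setminus\Sigma_T)$.

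\medskip

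The Hölder-type bound is the technical core. I would differentiate (\ref{eq:characteristics}) in $(x,v)$ and estimate $\nabla_{x,v}(X^g,V^g)$ by Gronwall's inequality applied to the variational equation, whose coefficients involve $\nabla_x u^g$; since $\|\nabla_x u^g\|_{L^1_t L^\infty_x}$ is finite by (\ref{eq:champLipschitz}), the flow derivatives grow at most like $(1+|v|)$, exactly as in Lemma \ref{lemma:Gronwall}. Differentiating $e^{2t}f_0((X^g,V^g)(0,t,x,v))$ then produces the factor $\nabla f_0$ evaluated at the backward point, multiplied by the flow derivative. Using (\ref{eq:pointb}) to compare $|V^g(0,t,x,v)|$ with $e^t|v|$ and the quotient inequality (\ref{eq:inegalitequoitient}), the weight $(1+|V^g|)^{-\gamma}$ arising from $\|(1+|v|)^\gamma(|\nabla_x f_0|+|\nabla_v f_0|)\|_{L^\infty}$ absorbs the $(1+|v|)$ growth of the flow, leaving the claimed bound with the single power $(1+|v|)$ on the right-hand side of the first inequality and the weight $(1+|v|)^{\gamma+1}$ in (\ref{eq:estimeeunicite}). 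The opacity factor contributes only bounded multiplicative terms since $\A\in W^{1,\infty}$ and $\Y\in\C^\infty$, so crossing $\Sigma_T$ does not worsen the estimate (this is precisely why $\A$ was chosen Lipschitz). The assumption (\ref{eq:conditionforuniqueness}) with constant $\kappa$ is what feeds the second estimate, with $K_4$ depending on $\kappa$.

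\medskip

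The main obstacle I anticipate is handling the reduced time-regularity $u^g\in L^2_t\C^1_x$ in the Gronwall estimate: the classical argument in \cite{Glass} assumes continuity in time, whereas here $\nabla_x u^g$ is only integrable in $t$. I would resolve this by integrating the variational equation in $t$ and invoking the $L^1_t$-version of Gronwall's lemma, exactly as the remark after Lemma \ref{lemma:Gronwall} does for the zeroth-order flow estimate, so that the constant depends on $\|u^g\|_{L^2_t\C^1_x}\le\|u^g\|_{L^2_tH^2_x}$ and hence on $(T,M,\ovl{f})$ via (\ref{eq:champLipschitz}); one must also check that the isolated impact times from Lemma \ref{lemma:finitenumberofimpacts} do not accumulate, so that the piecewise-$\C^1$ estimate glues consistently across $\Sigma_T$.
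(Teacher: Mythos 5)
Your proposal is correct and takes essentially the same route as the paper: the paper gives no written proof of this lemma, saying only that it ``can be adapted from \cite[Lemma 2, p. 347]{Glass}, thanks to Lemma \ref{lemma:Gronwall} and (\ref{eq:champLipschitz})'', and your argument --- differentiating the explicit formula (\ref{eq:explicitsolutionVNS}) along the backward flow, a Gronwall estimate on the variational equation with coefficient $\nabla_x u^g\in L^1_tL^\infty_x$, the weighted bounds via (\ref{eq:pointb}) and (\ref{eq:inegalitequoitient}), and the observation that the opacity factor $\Y\A$ only contributes bounded Lipschitz multipliers --- is precisely that adaptation. The point you single out as the main obstacle, namely replacing $\C^0_t\C^1_x$ by $L^2_t\C^1_x$ via the integral form of Gronwall, is indeed the only modification needed relative to \cite{Glass}, and you resolve it exactly as the paper's remark after Lemma \ref{lemma:Gronwall} does for the zeroth-order flow estimate.
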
 

Let $\delta_2$ be given by (\ref{eq:parametersdelta}). Again, by construction of $\V_{\epsilon}$ and (\ref{eq:extensionHolder}), we deduce
\begin{equation}
\| \V_{\epsilon}[g] - \ovl{f} \|_{\C^{0,\delta_2}(Q_T)} \leq C_{\pi,\delta_2} \| \tilde{\V}_{\epsilon}[g] \|_{\C^{0,\delta_2}([0,T] \times (\T^2 \setminus H_{\delta} ) \times \R^2)}.
\label{eq:majorationc}
\end{equation} Then, interpolating (\ref{eq:vtildepointb}) and Lemma \ref{lemma:pointc}, we have
\begin{eqnarray}
&& \frac{|\tilde{\V}_{\epsilon}[g](t,x,v) - \tilde{\V}_{\epsilon}[g](t',x',v')|}{|(t,x,v) - (t',x',v')|^{\delta_2}} \nonumber \\
&& = \left( \frac{|\tilde{\V}_{\epsilon}[g](t,x,v) - \tilde{\V}_{\epsilon}[g](t',x',v')|}{(1+|v|)|(t,x,v) - (t',x',v')|}  \right)^{\frac{\gamma +2}{\gamma +3}} \nonumber \\
&& \quad \quad \quad \quad \quad \quad \quad \quad \times \left( (1 + |v|)^{\gamma+2} |\tilde{\V}_{\epsilon}[g](t,x,v) - \tilde{\V}_{\epsilon}[g](t',x',v')| \right)^{1 - \frac{\gamma+2}{\gamma+3}} \nonumber \\
&& \leq K_3^{\frac{\gamma +2}{\gamma +3}}K_5^{1-\frac{\gamma +2}{\gamma +3}} \left( \|f_0\|_{\C^1(\T^2\times \R^2)} + \| (1+|v|)^{\gamma+2} f_0  \|_{L^{\infty}(Q_T)} \right), \nonumber 
\end{eqnarray} with 
\begin{equation*}
K_5 = e^{2T}\left( 1 + K_2(T,\gamma)\right)^{\gamma+2}.
\end{equation*} Whence, by (\ref{eq:majorationc}), this gives that $\tilde{\V}_{\epsilon}[g]$ satisfies point (c), choosing
\begin{equation}
c_2 \geq C_{\pi,\delta_2}K_3(\gamma,\omega)^{\frac{\gamma + 2}{\gamma + 3}} K_5(T,\gamma)^{1 - \frac{\gamma + 2}{\gamma + 3}}.
\label{eq:choiceofc2}
\end{equation}


\subsubsection{Proof of point (a)} We show first the $L^{\infty}$ estimate. Using the fact that $\rho_{\ovl{f}}=0$ and point (b), we find 
\begin{eqnarray}
\left\| \int_{\R^2} \left( \V_{\epsilon}[g] - \ovl{f} \right)  \ud v \right\|_{L^{\infty}(\Omega_T)} &=& \left\| \int_{\R^2} \left( \V_{\epsilon}[g](t,x,v)  \right)  \ud v \right\|_{L^{\infty}(\Omega_T)} \nonumber \\
& \leq &\sup_{t,x \in \Omega_T}  \int_{\R^2} \left|\V_{\epsilon}[g] (t,x,v) \right|  \ud v \nonumber \\
& \leq & K_6 (\|f_0\|_{\C^1} + \|(1+|v|)^{\gamma +2} f_0 \|_{L^{\infty}}),  \nonumber 
\end{eqnarray} with
\begin{equation*}
K_6 := c_1 \int_{\R^2} \frac{\ud v}{(1 + |v|)^{\gamma + 2}}.
\end{equation*} To show the H\"{o}lder estimate, we interpolate (\ref{eq:vtildepointb}) and (c). Indeed, if $\delta_1$ is given by (\ref{eq:parametersdelta}) and $\tilde{\gamma}:= 2 + \frac{\gamma }{2}$, we have 
\begin{eqnarray}
&& (1+|v|)^{\tilde{\gamma}}\frac{|\V_{\epsilon}[g](t,x,v) - \V_{\epsilon}[g](t',x',v)|}{|(t,x,v) - (t',x',v)|^{\delta_1}} \nonumber \\
&& = \left( (1 + |v|)^{\gamma+2} |\V_{\epsilon}[g](t,x,v) -\V_{\epsilon}[g](t',x',v)| \right)^{\frac{1}{2} + \frac{1}{\gamma +2}} \nonumber \\
&& \quad \quad \quad \quad \quad \quad \quad \times \left( \frac{|\V_{\epsilon}[g](t,x,v) - \V_{\epsilon}[g](t',x',v)|}{|(t,x,v) - (t',x',v)|^{\delta_2}} \right)^{\frac{1}{2} - \frac{1}{\gamma +2}} \nonumber \\
&& \leq c_1^{\frac{1}{2} + \frac{1}{\gamma +2 }} c_2^{\frac{1}{2} - \frac{1}{\gamma + 2}} \left( \|f_0\|_{\C^1(\T^2\times \R^2)} + \| (1+|v|)^{\gamma+2} f_0   \|_{L^{\infty}(Q_T)}  \right). \nonumber 
\end{eqnarray} Consequently, choosing
\begin{equation}
c_3 \geq K_6 + c_1^{\frac{1}{2} + \frac{1}{\gamma +2 }} c_2^{\frac{1}{2} - \frac{1}{\gamma + 2}},
\label{eq:choiceofc3}
\end{equation} and thanks to (\ref{eq:smalldata}), we have that $\V_{\epsilon}[g]$ satisfies point (a). \par

\begin{proof}[Proof of Proposition \ref{proposition:fixedpoint}]

Let us choose $c_1,c_2,c_3$ large enough so that (\ref{eq:choiceofc1}), (\ref{eq:choiceofc2}) and (\ref{eq:choiceofc3}) are satisfied. Let us choose $\epsilon_0$ sufficiently small, given by (\ref{eq:epsilon0}). Then, the smallness assumption (\ref{eq:smalldata}) and the properties of $\V_{\epsilon} $ and $\Pi$ allow to conclude that if $\epsilon \leq \epsilon_0$, then
\begin{equation*}
\V_{\epsilon}(\S_{\epsilon}) \subset \S_{\epsilon}.
\end{equation*} Thus, thanks to the Leray-Schauder theorem, there exists $g \in \S_{\epsilon}$ such that $\V_{\epsilon}[g] = g$. This ends the proof of Proposition \ref{proposition:fixedpoint}.  

\end{proof}

We can furthermore obtain a regularity result for the fixed point found above.

\begin{corollary}
let $\epsilon \leq \epsilon_0$ and $g=\V_{\epsilon}[g]$. If $f_0\in \C^1(\T^2 \times \R^2)$ satisfies (\ref{eq:conditionforuniqueness}), then
\begin{align}
& g\in \C^1(Q_T), \label{eq:regulariteunicite} \\ 
& \exists \kappa'>0, \, \sup_{(t,x,v) \in Q_T} (1 + |v|)^{\gamma+1}\left( |g| + |\nabla_{x,v}g| \right)(t,x,v) \leq \kappa'. \label{eq:classeunicite} 
\end{align}
\label{corollary:regularityofg}
\end{corollary}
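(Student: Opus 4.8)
The plan is to exploit the explicit structure of the fixed point, $g = \ovl{f} + \Pi\bigl(\tilde{\V}_{\epsilon}[g]_{|R}\bigr)$, where $R := ([0,T]\times(\T^2\setminus H_{\delta})\times\R^2)\cup([0,\tfrac{T}{48}]\times\T^2\times\R^2)$ is the restriction domain appearing in (\ref{eq:fixedpointoperator}). Since the reference distribution satisfies $\ovl{f}\in\C^{\infty}([0,T]\times\T^2;\S(\R^2))$ by (\ref{eq:regularityofthereferencedistribution}), it is smooth with rapid decay in $v$ and contributes harmlessly to both (\ref{eq:regulariteunicite}) and (\ref{eq:classeunicite}); thus everything reduces to controlling the second summand $\Pi(\tilde{\V}_{\epsilon}[g]_{|R})$.

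For the $\C^1$ regularity (\ref{eq:regulariteunicite}), first I would locate the singular set. By Lemma \ref{lemma:pointc}, $\tilde{\V}_{\epsilon}[g]\in\C^1(Q_T\setminus\Sigma_T)$ with $\Sigma_T=[0,T]\times\gamma^-$, and $\gamma^-\subset\partial H_{\delta}\times\R^2$. The restriction domain $R$ is designed precisely to avoid $\Sigma_T$: on $[0,T]\times(\T^2\setminus H_{\delta})\times\R^2$ we stay in the open set $Q_T\setminus\Sigma_T$ because $H_{\delta}$ is closed, while on $[0,\tfrac{T}{48}]\times\T^2\times\R^2$ one has $\Y\equiv 0$, so no absorption occurs and $\tilde{\V}_{\epsilon}[g]$ coincides with the absorption-free transported datum $e^{2t}f_0\circ(X^g,V^g)$, which is $\C^1$ since $f_0\in\C^1$ and the characteristics are $\C^1$ by (\ref{eq:champLipschitz}). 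Hence $\tilde{\V}_{\epsilon}[g]_{|R}$ is $\C^1$. Next I would push this regularity through $\Pi f = (1-\tilde{\Y}(t))f + \tilde{\Y}(t)\pi f$: the extension operator $\ovl{\pi}$ can be chosen to preserve $\C^1$ regularity (a standard property of reflection or Stein-type extensions, in addition to the $\C^{0,\sigma}$ property recorded in (\ref{eq:extensionHolder})), it acts only in $x$ for fixed $(t,v)$, and $\tilde{\Y}$ is a smooth time-cutoff. Thus $\Pi(\tilde{\V}_{\epsilon}[g]_{|R})$ is a smooth combination of two $\C^1$ functions. The gluing is consistent: for $t\geq\tfrac{T}{48}$ one has $\tilde{\Y}\equiv 1$ and $\Pi f=\pi f$, with $f_{|\T^2\setminus H_{\delta}}$ being $\C^1$ up to $\partial H_{\delta}$ from the exterior; for $t\leq\tfrac{T}{100}$ one has $\tilde{\Y}\equiv 0$ and the full interior value is available and $\C^1$; and across the transition all $t$-derivatives of $1-\tilde{\Y}$ vanish at $t=\tfrac{T}{48}$, so the pieces match in $\C^1$. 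This yields $g\in\C^1(Q_T)$.

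For the weighted bound (\ref{eq:classeunicite}), I would treat the two terms separately. The zeroth-order part follows from point (b) of $\S_{\epsilon}$, or equivalently from (\ref{eq:vtildepointb}), which gives $(1+|v|)^{\gamma+2}|g-\ovl{f}|\in L^{\infty}$; since $(1+|v|)^{\gamma+1}\leq(1+|v|)^{\gamma+2}$, this a fortiori bounds $(1+|v|)^{\gamma+1}|g|$ after adding the Schwartz contribution of $\ovl{f}$. The first-order part is exactly where hypothesis (\ref{eq:conditionforuniqueness}) enters: it is the assumption under which Lemma \ref{lemma:pointc} supplies the weighted gradient estimate (\ref{eq:estimeeunicite}), namely $(1+|v|)^{\gamma+1}|\nabla_{x,v}\tilde{\V}_{\epsilon}[g]|\in L^{\infty}$. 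Since $\pi$ acts in $x$ (so that $\nabla_v$ commutes with it) and preserves weighted $L^{\infty}$ bounds in the spirit of (\ref{eq:extensioninfty}), and $\tilde{\Y}$ has bounded derivative, the same weighted control survives $\Pi$; adding $\nabla_{x,v}\ovl{f}$ (again Schwartz) gives (\ref{eq:classeunicite}) with $\kappa'$ depending on $K_4$, $\kappa$, $C_{\pi}$, $c_1$ and $\ovl{f}$.

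The main obstacle I anticipate lies in the $\C^1$ gluing rather than in the weighted estimates: one must verify that $\tilde{\V}_{\epsilon}[g]$ is genuinely $\C^1$ up to $\partial H_{\delta}$ from the exterior, so that $\pi$ has $\C^1$ boundary data to extend, and that the cutoffs $\Y$ and $\tilde{\Y}$ are arranged so that the interior of the strip is only exposed by the restriction during the window $t\leq\tfrac{T}{48}$ where absorption is switched off. Both points rest on the fine characteristic analysis underlying Lemma \ref{lemma:pointc} and its model \cite[Lemma 2]{Glass}; granting that analysis, the extension machinery assembles the global $\C^1$ function with the required decay, which completes the proof.
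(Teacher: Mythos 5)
Your proposal is correct and follows essentially the same route as the paper, which simply invokes the construction of $\ovl{f}$ and $\tilde{\V}_{\epsilon}$, the weighted gradient estimate (\ref{eq:estimeeunicite}) from Lemma \ref{lemma:pointc}, and the regularity-preserving property of $\Pi$. Your write-up merely makes explicit the gluing across $\partial H_{\delta}$ and the time cutoffs that the paper leaves implicit.
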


\begin{proof}
This is a consequence of the construction and Lemma \ref{lemma:pointc}, which gives (\ref{eq:estimeeunicite}). By the construction of $\ovl{f}$ and $\tilde{\V}_{\epsilon}$, and since $\Pi$ preserves regularity, we deduce (\ref{eq:regulariteunicite}) and (\ref{eq:classeunicite}).
\end{proof}


\section{Stability estimates for the Vlasov-Navier-Stokes system}
\label{sec: stability VNS}

The goal of this section is to prove the following stability estimate for the strong solutions of (\ref{eq:VNS}), that will be crucial in the proof of Theorem \ref{thm:controllability}, both for the controllability as well as for uniqueness.

\begin{proposition}
Let $T>0$, $\gamma > 2$ and let $(g,u^g)$ and $(f,u^f)$ be two strong solutions of system (\ref{eq:VNS}), according to Definition \ref{definition:strong}, for initial data
\begin{align*}
u^g|_{t=0}=u^g_0, \quad g|_{t=0} = g_0, \\
u^f|_{t=0}=u^f_0, \quad f|_{t=0} = f_0.
\end{align*} Assume further that there exists $\kappa >0 $ such that
\begin{equation}
\sup_{(t,x,v) \in Q_T} \left( 1 + |v| \right)^{\gamma + 1} \left( |f| + |g| + |\nabla_x f| + |\nabla _x g|  \right) \leq \kappa.
\label{eq:hypothesestabilite}
\end{equation} Then, there exists a constant $C>0$ such that for any $t\in (0,T]$,
\begin{align}
& \|(u^g - u^f)(t)\|_{H^{\frac{1}{2}}(\T^2)}^2 + \int_0^t \|\nabla(u^g - u^f)(s)\|_{H^{\frac{1}{2}}(\T^2)}^2 \ud s \label{eq:stabilityH32}  \\
& \quad \quad \quad \quad  \leq e^{C(t)} \left( \|u^g_0 - u^f_0 \|^2_{H^{\frac{1}{2}}(\T^2)}  + \int_0^t \|j_{g-f} - \rho_{g-f} u^f  \|^2_{L^2(\T^2)}   \right)  \nonumber,
\end{align} where 
\begin{equation*}
C(t):= \int_0^t \left( 1 + \|\rho_f(s)\|_{L^{\infty}(\T^2)} + \|\rho_f(s)\|_{L^{\infty}(\T^2)}^2 \right) \ud s. 
\end{equation*}
\label{proposition:stabilityH32}
\end{proposition}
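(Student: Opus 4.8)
The plan is to reduce the statement to a forced Navier--Stokes estimate for the difference $w:=u^g-u^f$ and then run the same $H^{1/2}$ energy method already employed for the reference field in \eqref{eq:estimateH32pourconclure}. Setting $q:=p^g-p^f$ and subtracting the two momentum equations, the convective part splits as $(u^g\cdot\nabla)u^g-(u^f\cdot\nabla)u^f=(u^f\cdot\nabla)w+(w\cdot\nabla)u^g$, while for the drag part one writes, using $u^g=u^f+w$,
\begin{equation*}
\bigl(j_g-\rho_g u^g\bigr)-\bigl(j_f-\rho_f u^f\bigr)=\bigl(j_{g-f}-\rho_{g-f}u^f\bigr)-\rho_g\,w .
\end{equation*}
Hence $w$ solves
\begin{equation*}
\partial_t w+(u^f\cdot\nabla)w+(w\cdot\nabla)u^g-\Delta w+\nabla q=\bigl(j_{g-f}-\rho_{g-f}u^f\bigr)-\rho_g w,\qquad \div w=0,
\end{equation*}
with $w_{|t=0}=u^g_0-u^f_0$. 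The first bracket is exactly the forcing kept in \eqref{eq:stabilityH32}; the term $\rho_g w$ is linear in $w$ with bounded coefficient and will feed the Gronwall factor (note $\rho_g w=\rho_f w+\rho_{g-f}w$, both coefficients being $L^\infty$, so the factor can be expressed through $\|\rho_f\|_{L^\infty}$). Before estimating, I would record how hypothesis \eqref{eq:hypothesestabilite} enters: since $\gamma+1>3$, integrating the pointwise weighted bounds against $(1+|v|)^{-(\gamma+1)}$ gives $\rho_f,\rho_g\in L^\infty(\Omega_T)$ and, from the bounds on $\nabla_x f,\nabla_x g$, also $\nabla_x\rho_f,\nabla_x\rho_g\in L^\infty$; thus $\rho_f,\rho_g\in W^{1,\infty}(\T^2)$ are admissible pointwise multipliers of $H^{1/2}(\T^2)$, which is what makes the reaction term tractable at this regularity.

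As in the proof of \eqref{eq:champuniforme} I would peel off the spatial mean $m_w(t):=\int_{\T^2}w$, which solves the linear ODE obtained by integrating the equation in $x$ (all $x$-derivative terms drop), and work with the mean-free part $\hat w:=w-m_w$ in the homogeneous spaces $H^s_0$ of \eqref{eq:normsFourier}. Testing the $\hat w$-equation against $(-\Delta_x)^{1/2}\hat w$ --- equivalently multiplying the $k$-th Fourier mode by the weight $|k|$ and summing --- produces
\begin{equation*}
\tfrac12\tfrac{d}{dt}\|\hat w\|_{H^{1/2}_0}^2+\|\nabla\hat w\|_{H^{1/2}_0}^2=-\langle(u^f\cdot\nabla)w,\hat w\rangle_{H^{1/2}_0}-\langle(w\cdot\nabla)u^g,\hat w\rangle_{H^{1/2}_0}+\langle F,\hat w\rangle_{H^{1/2}_0}-\langle\rho_g w,\hat w\rangle_{H^{1/2}_0},
\end{equation*}
with $F:=j_{g-f}-\rho_{g-f}u^f$. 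The viscous term furnishes the dissipation $\|\nabla\hat w\|_{H^{1/2}_0}^2$, which is the quantity used to absorb everything on the right.

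For the forcing, $\langle F,\hat w\rangle_{H^{1/2}_0}=\int_{\T^2}F\cdot(-\Delta_x)^{1/2}\hat w\le\|F\|_{L^2}\|\nabla\hat w\|_{L^2}\le\|F\|_{L^2}\|\nabla\hat w\|_{H^{1/2}_0}$, where the last inequality uses $|k|\ge1$ for $k\neq0$; a Young inequality then absorbs part of the dissipation and leaves precisely $\|F\|_{L^2}^2$, i.e. the integrand of the r.h.s. of \eqref{eq:stabilityH32} (this is why the source is kept in $L^2$ and not in a negative-order norm). The reaction term is bounded by $\|\rho_g w\|_{H^{1/2}_0}\|\hat w\|_{H^{1/2}_0}\lesssim\|\rho_g\|_{W^{1,\infty}}\|w\|_{H^{1/2}}^2$ via the product law for the multiplier $\rho_g$, producing the $\|\rho_f\|_{L^\infty}$ contribution to $C(t)$ (and, after Young against the dissipation, the quadratic $\|\rho_f\|_{L^\infty}^2$ term).

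The two convective terms are the delicate point, since $H^{1/2}(\T^2)$ is not an algebra. For $(u^f\cdot\nabla)w$ I would exploit $\div u^f=0$ to convert the pairing into a commutator and invoke the Chemin-type paraproduct/commutator estimates \cite{Chemin}, bounding it by $\tfrac18\|\nabla\hat w\|_{H^{1/2}_0}^2+C\|\nabla u^f\|_{H^{1/2}}^2\|w\|_{H^{1/2}}^2$; the term $(w\cdot\nabla)u^g$ is handled similarly, producing a factor $\|\nabla u^g\|_{H^{1/2}}^2$. These factors are integrable in time because $u^f,u^g$ are strong solutions (so $\nabla u^f,\nabla u^g\in L^2_tH^{1/2}_x$), which is why they may be folded into the constant $C$, the genuinely $g$-dependent growth being carried by the density term and yielding the displayed $C(t)$. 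Collecting all estimates, accounting for the mean $m_w$ (controlled via $|m_w(t)|\le\|w(t)\|_{L^2}$ and its own ODE), and applying Gronwall's lemma gives \eqref{eq:stabilityH32}. The main obstacle is thus carrying out the convective commutator/product estimates sharply enough to absorb them into the viscous dissipation while leaving only time-integrable Gronwall coefficients; the rest is bookkeeping of the mean and routine Young inequalities.
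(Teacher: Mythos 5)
Your proposal is correct and follows the same overall skeleton as the paper's proof --- write the equation satisfied by $w=u^g-u^f$, perform an $H^{\frac12}$ energy estimate, absorb the nonlinear contributions into the dissipation $\|\nabla w\|_{H^{1/2}}^2$ via Young's inequality, and conclude by Gronwall --- but the two nonlinear terms are treated by genuinely different means. For the convection, the paper uses no commutator or paraproduct: it bounds the $H^{\frac12}$ pairing by $\|u^g\cdot\nabla u^g-u^f\cdot\nabla u^f\|_{L^2}\,\|w\|_{H^1}$ via Cauchy--Schwarz in Fourier, interpolates $\|w\|_{H^1}$ between $\|w\|_{H^{1/2}}$ and $\|\nabla w\|_{H^{1/2}}$, and estimates the $L^2$ norm by $(\|\nabla u^g\|_{L^4}+\|u^f\|_{L^4})(\|w\|_{H^{1/2}}+\|\nabla w\|_{H^{1/2}})$; the required \emph{uniform-in-time} bound on $\|\nabla u^g\|_{L^4}+\|u^f\|_{L^4}$ is then obtained by a parabolic bootstrap, using hypothesis \eqref{eq:hypothesestabilite} to get $j-\rho u\in L^2_tH^1_x$ and hence $u^f,u^g\in\C^0_tH^2_x\cap L^2_tH^3_x$ as in \eqref{eq:stabilityregulariteplusplus}. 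Your commutator route avoids that bootstrap entirely and only needs $\nabla u^f,\nabla u^g\in L^2_tH^{1/2}_x$, which already follows from \eqref{eq:champregulier}; the price is a merely time-integrable (rather than bounded) Gronwall coefficient, so your exponent is $C(t)$ plus a fixed additive constant depending on the two solutions --- harmless for the intended applications, but not literally the displayed $e^{C(t)}$. For the reaction term the paper likewise avoids multiplier theory on $H^{\frac12}$: it places $\rho_f w$ in $L^2$ and pays $\|w\|_{H^1}$, interpolated as above, which requires only $\rho_f\in L^\infty$; your product-law argument instead needs the $W^{1,\infty}$ control on the densities, which \eqref{eq:hypothesestabilite} does indeed supply through the weighted gradient bounds. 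Finally, note that your drag decomposition (forcing $j_{g-f}-\rho_{g-f}u^f$, reaction $-\rho_g w$) matches the forcing written in the statement but produces $\|\rho_g\|_{L^\infty}$ in the Gronwall factor, whereas the paper's is the mirror decomposition (forcing with $u^g$, reaction $-\rho_f w$); since $\|\rho_{g-f}\|_{L^\infty}$ is not controlled by $\|\rho_f\|_{L^\infty}$ alone, your parenthetical claim that the coefficient ``can be expressed through $\|\rho_f\|_{L^\infty}$'' should instead invoke \eqref{eq:hypothesestabilite}, which bounds both densities by a constant multiple of $\kappa$ and renders all these variants equivalent up to constants.
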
 

\begin{proof}
We observe that the difference $w:= u^g - u^f$ satisfies the system
\begin{equation}
\left\{ \begin{array}{ll}
\partial_t w - \Delta w + \nabla \pi = -u^g\cdot \nabla u^g + u^f \cdot \nabla u^f   +j_{g - f} - \rho_g u^g + \rho_f u^f,  & \textrm{in } \Omega_T, \\
\div_x w = 0,    &  \textrm{in } \Omega_T, \\
w|_{t=0} = u^g_0 - u^f_0, & \textrm{in } \T^2.
\end{array} \right.
\label{eq:NSdifference}
\end{equation} Thanks to (\ref{eq:champregulier}), the difference $u^g - u^f$ belongs at least to $\C^0([0,T]; \V_{\sigma})$, which allows, up to a regularisation in time argument, to use it as a test function. Indeed, multiplying the equation in (\ref{eq:NSdifference}) by $ u^g - u^f$ and taking the $H^{\frac{1}{2}}$ scalar product, we find
\begin{align}
& \frac{\ud }{\ud t} \| (u^g - u^f)(t) \|^2_{H^{\frac{1}{2}}} + \| \nabla (u^g - u^f)(t)\|^2_{H^{\frac{1}{2}}}  \nonumber \\
& \quad \quad \quad  \quad  = \left|  \left( -u^g\cdot \nabla u^g + u^f\cdot \nabla u^f + j_{g - f} - \rho_g u^g + \rho_f u^f , u^g - u^f    \right)_{H^{\frac{1}{2}}}(t) \right|  \nonumber  \\
& \quad \quad \quad  \quad \leq \mathscr{T}_1(t) + \mathscr{T}_2(t), \label{eq:energyestimatedifference} 
\end{align} with 
\begin{align}
\mathscr{T}_1(t)  & := \left|  \left( -u^g\cdot \nabla u^g + u^f\cdot \nabla u^f, u^g - u^f  \right)_{H^{\frac{1}{2}}}(t) \right|, \label{eq:stabilityT1} \\
\mathscr{T}_2(t)  & := \left|  \left(  j_{g - f} - \rho_g u^g + \rho_f u^f, u^g - u^f    \right)_{H^{\frac{1}{2}}}(t) \right|.
\end{align} We then have to estimate the two terms of the right-hand side. \par 

\vspace{0.5em}
\textbf{First Term.} For the term $\mathscr{T}_1$, according to (\ref{eq:normsFourier}), one has
\begin{align}
& \mathscr{T}_1(t) = \left| \sum_{n\in \Z^2} (-u^g\cdot \nabla u^g + u^f \cdot \nabla u^f)_n (1+|n|^2)^{\frac{1}{2}} \overline{ (u^g - u^f)_n } \right| \label{eq:convection12} \\
& \quad \quad = \left| \sum_{n\in \Z^2 \setminus\left\{ 0 \right\}} (-u^g\cdot \nabla u^g + u^f \cdot \nabla u^f)_n (1+|n|^2)^{\frac{1}{2}} \overline{ (u^g - u^f)_n } \right| \nonumber \\
& \quad \quad \lesssim \sum_{n\in \Z^2 \setminus\left\{ 0 \right\}} \left| (-u^g\cdot \nabla u^g + u^f \cdot \nabla u^f)_n \right| |n| \left|(u^g - u^f)_n \right| \nonumber \\
& \quad \quad \lesssim \left( \sum_{n\in \Z^2 \setminus\left\{ 0 \right\}}\left| (-u^g\cdot \nabla u^g + u^f \cdot \nabla u^f)_n \right|^2   \right)^{\frac{1}{2}} \left( \sum_{n\in \Z^2 \setminus\left\{ 0 \right\}} |n|^2 \left|(u^g - u^f)_n \right|^2   \right)^{\frac{1}{2}} \nonumber \\
& \quad \quad = \| -u^g\cdot \nabla u^g + u^f \cdot \nabla u^f \|_{L^2(\T^2)} \left( \sum_{n\in \Z^2 \setminus\left\{ 0 \right\}} |n|^2 \left|(u^g - u^f)_n \right|^2   \right)^{\frac{1}{2}}, \nonumber
\end{align} thanks to the Cauchy-Schwarz's inequality. \par 

For the second term in the above inequality, we have, for every $n\in \Z^2\setminus \left\{0 \right\}$, and for any $\eta>0$,
\begin{align*}
|n|^2 |(u^g-u^f)_n|^2 & \lesssim \frac{1}{\eta^2} |n||(u^g-u^f)_n|^2 + \eta^2 |n|^3 |(u^g-u^f)_n|^2 \\
& \lesssim \left( \frac{1}{\eta} |n|^{\frac{1}{2}} |(u^g-u^f)_n| + \eta |n|^{\frac{3}{2}} |(u^g-u^f)_n|  \right)^2
\end{align*} This gives, thanks to Minkowski's inequality,
\begin{align}
& \left( \sum_{n\in \Z^2\setminus\left\{0\right\}} |n|^2 |(u^g - u^f)_n|^2 \right)^{\frac{1}{2}} \label{eq:estimeediferenceconvection} \\
& \quad \quad \leq \left[ \sum_{n\in \Z^2\setminus\left\{0\right\}} \left( \frac{1}{\eta} |n|^{\frac{1}{2}} |(u^g-u^f)_n| + \eta |n|^{\frac{3}{2}} |(u^g-u^f)_n|  \right)^2       \right]^{\frac{1}{2}} \nonumber \\
& \quad \quad \lesssim \frac{1}{\eta} \left( \sum_{n\in \Z^2\setminus\left\{0\right\}} |n||(u^g-u^f)_n|^2  \right)^{\frac{1}{2}}
+ \eta \left( \sum_{n\in \Z^2\setminus\left\{0\right\}} |n|^3|(u^g-u^f)_n|^2  \right)^{\frac{1}{2}} \nonumber \\
& \quad \quad = \frac{1}{ \eta } \| u^g - u^f\|_{H^{\frac{1}{2}}} + \eta \| \nabla (u^g - u^f) \|_{H^{\frac{1}{2}}}. \nonumber
\end{align} On the other hand, in the same fashion as in (\ref{eq:convectionLr}) with $r=2$, we can estimate the convection term as follows.
\begin{align}
& \left\| \left( - u^g \cdot\nabla u^g + u^f \cdot\nabla u^f \right)(t) \right\|_{L^2(\T^2)} = \| (u^g - u^f )\cdot \nabla u^g + u^f \cdot\nabla (u^g - u^f) \|_{L^2(\T^2)} \nonumber \\
& \quad \quad \quad \leq \| (u^g - u^f)\cdot\nabla u^g\|_{L^2} + \|u^f \cdot\nabla (u^g - u^f) \|_{L^2} \nonumber \\
& \quad \quad \quad \lesssim \|u^g - u^f\|_{L^4}\|\nabla u^g\|_{L^4} + \|u^f\|_{L^4} \| \nabla (u^g - u^f) \|_{L^4}   \nonumber  \\
& \quad \quad \quad \lesssim  \left( \|\nabla u^g(t)\|_{L^4} + \|u^f(t)\|_{L^4}   \right) \left( \| (u^g - u^f)(t) \|_{H^{\frac{1}{2}}} + \| \nabla(u^g - u^f)(t) \|_{H^{\frac{1}{2}}} \right), \label{eq:stabilityT1premiermorceau}
\end{align} thanks to the Sobolev embedding $H^{\frac{1}{2}}(\T^2) \hookrightarrow L^4(\T^2)$. In order to estimate the last inequality, we shall prove that 
\begin{equation}
\sup_{t} \left(  \|\nabla u^g(t)\|_{L^4(\T^2)} + \|u^f(t) \|_{L^4(\T^2)} \right) \leq C,
\label{eq:stabilitymajorationinftyH2}
\end{equation} for some constant $C>0$. \par 
Indeed, since $u^f,u^g \in \C^0_tH_x^1 \cap L^2_tH^2_x $, according to (\ref{eq:champregulier}), and thanks to the hypothesis (\ref{eq:hypothesestabilite}), we have that 
\begin{equation*}
j_g - \rho_g u^g \in L^2(0,T;H^1(\T^2)), \quad  j_f - \rho_f u^f \in L^2(0,T;H^1(\T^2)).
\end{equation*} Thus, from classical parabolic regularity result for the Navier-Stokes system (see \cite[Theorem V.2.10 and Corollary V.2.11, p.384]{Boyer}), we have
\begin{equation}
u^g,u^f \in \C^0([0,T];H^2(\T^2)) \cap L^2(0,T;H^3(\T^2)).
\label{eq:stabilityregulariteplusplus}
\end{equation} Thus, thanks to the Sobolev embedding $H^{\frac{1}{2}}(\T^2) \hookrightarrow L^4(\T^2)$, this entails, for any $t\in (0,T]$, that
\begin{align*}
\| \nabla u^g(t) \|_{L^4(\T^2)} + \| u^f(t) \|_{L^4(\T^2)}  & \lesssim \| \nabla u^g(t) \|_{H^{\frac{1}{2}}(\T^2)} + \| u^f(t) \|_{H^{\frac{1}{2}}(\T^2)} \\
& \lesssim \max \left\{ \| u^g(t)\|_{H^2(\T^2)}, \| u^f(t)\|_{H^2(\T^2)} \right\} \\
& \lesssim \max \left\{ \| u^g\|_{\C^0_t H^2(\T^2)}, \| u^f\|_{\C^0_t H^2(\T^2)} \right\},
\end{align*} which is finite thanks to (\ref{eq:stabilityregulariteplusplus}). This proves (\ref{eq:stabilitymajorationinftyH2}). \par 

Thus, combining (\ref{eq:stabilitymajorationinftyH2}) with (\ref{eq:stabilityT1premiermorceau}), we obtain, for any $t\in (0,T]$,
\begin{align*}
& \int_0^t \left\| \left( - u^g \cdot\nabla u^g + u^f \cdot\nabla u^f \right)(s) \right\|_{L^2(\T^2)}^2 \ud s \\
& \quad \quad \quad \quad \quad \quad \quad \quad \quad  \lesssim \int_0^t \left( \| (u^g - u^f)(s) \|_{H^{\frac{1}{2}}}^2 + \| \nabla(u^g - u^f)(s) \|_{H^{\frac{1}{2}}}^2 \right) \ud s. 
\end{align*} Consequently, injecting this in (\ref{eq:convection12}) and taking (\ref{eq:estimeediferenceconvection}) into account, one has 
\begin{align*}
& \int_0^T \mathscr{T}_1(t) \ud t \\
&    \leq \int_0^T \|(u^g\cdot \nabla u^g - u^f\cdot \nabla u^f )(t) \|_{L^2} \left( \sum_{n\in \Z^2} |n|^2 \left| (u^g-u^f)_n(t) \right|^2  \right)^{\frac{1}{2}} \ud t \\ 
& \leq \left( \int_0^T  \|   (u^g\cdot \nabla u^g - u^f\cdot \nabla u^f )(t)\|^2_{L^2} \ud t  \right)^{\frac{1}{2}} \left( \int_0^T \sum_{n\in \Z^2} |n|^2 \left| (u^g-u^f)_n(t) \right|^2 \ud t  \right)^{\frac{1}{2}} \\
& \leq \left( \int_0^T  \|  (u^g- u^f)(t) \|^2_{L^2} +\| \nabla (u^g - u^f )(t)\|^2_{L^2} \ud t  \right)^{\frac{1}{2}} \\
& \quad \quad \quad \quad \quad \quad \quad \times \left( \int_0^T \frac{1}{\eta}\|  (u^g- u^f)(t) \|^2_{H^{\frac{1}{2}}} +  \eta \| \nabla (u^g - u^f )(t)\|^2_{H^{\frac{1}{2}}} \ud t  \right)^{\frac{1}{2}}.
\end{align*} Hence, choosing $\eta <1$ small enough, this yields 
\begin{equation}
\int_0^T \mathscr{T}_1(t) \ud t \lesssim  \frac{1}{\eta} \int_0^T\|  (u^g- u^f)(t) \|^2_{H^{\frac{1}{2}}} \ud t  + \sqrt{\eta }\int_0^T \| \nabla (u^g - u^f )(t)\|^2_{H^{\frac{1}{2}}} \ud t.
\label{eq:stabilitymajorationT1}
\end{equation}

\vspace{0.5em}

\textbf{Second term.} Let us treat next the other term in the right-hand side of (\ref{eq:energyestimatedifference}). We proceed as follows. For any $t\in [0,T]$, we have
\begin{align}
\mathscr{T}_2(t) &   \leq \left| \left(j_{g-f} -\rho_{g-f}u^g, u^g - u^f \right)_{H^{\frac{1}{2}}} \right| + \left| \left(  \rho_f(u^g - u ^f), u^g - u^f \right)_{H^{\frac{1}{2}}} \right| \label{eq:stabilitesecondterme} \\
& = \left| \sum_{n\in \Z^2} (j_{g-f} -\rho_{g-f} u^g )_n (1+|n|^2)^{\frac{1}{2}} \overline{ (u^g - u^f)_n} \right|  + \left| \left(  \rho_f(u^g - u ^f), u^g - u^f \right)_{H^{\frac{1}{2}}} \right| \nonumber \\
& \leq \sum_{n\in \Z^2 \setminus \left\{ 0 \right\}} \left|  (j_{g - f} -\rho_g u^g )_n (1+|n|^2)^{\frac{1}{2}} \overline{( u^g - u^f )_n}    \right| + \left|(j_{g-f} - \rho_g u^g)_0( u^g - u^f )_0 \right| \nonumber \\ 
& \quad \quad \quad \quad \quad \quad \quad \quad \quad \quad \quad  + \left| \left(  \rho_f(u^g - u ^f), u^g - u^f \right)_{H^{\frac{1}{2}}} \right| \nonumber \\
& \lesssim  \sum_{n\in \Z^2\setminus \left\{ 0 \right\}} \left| (j_{g - f} - \rho_g u^g )_n  \right|^2 +  \sum_{n\in \Z^2 \setminus \left\{ 0 \right\}} |n|^2 \left| ( u^g - u^f )_n  \right|^2 \nonumber \\
& \quad \quad \quad \quad \quad \quad  + \left| \int_{\T^2} (j_{g - f } - \rho_g u^g) \ud x \int_{\T^2} (u^g - u^f) \ud x \right| +  \left| \left(  \rho_f(u^g - u ^f), u^g - u^f \right)_{H^{\frac{1}{2}}} \right| \nonumber \\
& = \mathcal{B}_1(t) + \mathcal{B}_2(t) + \mathcal{B}_3(t), \nonumber
\end{align} with
\begin{align*}
& \mathcal{B}_1(t) :=  \sum_{n\in \Z^2\setminus \left\{ 0 \right\}} \left| (j_{g - f} - \rho_g u^g )_n  \right|^2 + \left| \int_{\T^2} (j_{g - f} - \rho_g u^g ) \ud x \int_{\T^2} ( u^g - u^f ) \ud x \right|, \\
& \mathcal{B}_2(t) :=  \sum_{n\in \Z^2 \setminus \left\{ 0 \right\}} |n|^2 \left| ( u^g - u^f )_n  \right|^2, \\
& \mathcal{B}_3(t):=  \left| \left(  \rho_f(u^g - u ^f), u^g - u^f \right)_{H^{\frac{1}{2}}} \right|.
\end{align*} For the first term, we write 
\begin{align}
\mathcal{B}_1 & \lesssim \| j_{g-f} - \rho_{g-f}u^g \|^2_{L^2} \label{eq:termeT1stabilite} \\ 
& \quad \quad \quad \quad + \frac{1}{2}\int_{\T^2} |j_{g-f} - \rho_{g-f}u^g |^2 \ud x + \frac{1}{2} \int_{\T^2}|u^g - u^f|^2 \ud x \nonumber \\
& \lesssim \frac{3}{2} \| j_{g-f} - \rho_{g-f}u^g \|^2_{L^2} + \|u^g - u^f\|^2_{H^\frac{1}{2}}. \nonumber 
\end{align} For the second term, using the Cauchy-Schwarz's inequality, one finds
\begin{align}
\mathcal{B}_2 & = \sum_{n\in \Z^2\setminus \left\{ 0 \right\}} |n|^{\frac{1}{2} + \frac{3}{2}} |(u^g-u^f)_n|^2  \label{eq:termeT2stabilite} \\
& \lesssim \left( \sum_{n\in \Z^2\setminus \left\{ 0 \right\}} |n||(u^g-u^f)_n|^2    \right)^{\frac{1}{2}} \left( \sum_{n\in \Z^2\setminus \left\{ 0 \right\}} |n|^3|(u^g-u^f)_n|^2    \right)^{\frac{1}{2}}  \nonumber  \\
& \lesssim \frac{1}{\eta} \sum_{n\in \Z^2\setminus \left\{ 0 \right\}} |n||(u^g-u^f)_n|^2  + \eta \sum_{n\in \Z^2\setminus \left\{ 0 \right\}} |n|^3|(u^g-u^f)_n|^2   \nonumber  \\
& \lesssim \frac{1}{\eta} \| u^g - u^f \|_{H^{\frac{1}{2}}}^2 + \eta \|\nabla(u^g - u^f)\|^2_{H^{\frac{1}{2}}}, \nonumber
\end{align} for some $\eta \in (0,1)$ to be chosen later on. For the third term, we have, thanks to (\ref{eq:hypothesestabilite}),
\begin{align}
\mathcal{B}_3 & \lesssim \sum_{n\in \Z^2\setminus \left\{ 0 \right\}} \left( \rho_f(u^g-u^f)\right)_n (1+ |n|^2)^{\frac{1}{2}} \overline{(u^g-u^f)_n} \label{eq:termeT3stabilite} \\
& \quad \quad \quad \quad  \quad \quad + \left|  \int_{\T^2} \rho_f (u^g-u^f)\ud x \int_{\T^2}(u^g-u^f)\ud x \right| \nonumber\\
& \lesssim \frac{1}{\eta}\sum_{n\in\Z^2 \setminus \left\{0\right\}}  \left|  \left( \rho_f(u^g-u^f)\right)_n \right|^2 +  \eta \sum_{n\in\Z^2 \setminus \left\{0\right\}} |n|^2 |(u^g-u^f)_n|^2 \nonumber \\
& \quad \quad \quad \quad \quad \quad + \|\rho_f\|_{L^{\infty}(\T^2)} \left( \int_{\T^2} |u^g-u^f|\ud x \right)^2 \nonumber \\
& \lesssim \left( \frac{\|\rho_f\|_{L^{\infty}(\T^2)}}{\eta} + 1 \right) \|\rho_f\|_{L^{\infty}(\T^2)} \|u^g-u^f\|^2_{L^2(\T^2)} + \eta \|\nabla(u^g-u^f)\|^2_{L^2(\T^2)}, \nonumber
\end{align} thanks to Jensen's inequality. \par 

We finally have, for any $t\in (0,T]$,
\begin{align}
\mathscr{T}_2(t) & \leq \left( 1 + \|\rho_f(t) \|_{L^2(\T^2)} + \frac{1}{\eta}(1 + \|\rho_f(t) \|_{L^2(\T^2)}^2) \right) \|(u^g - u^f)(t)\|_{H^{\frac{1}{2}}(\T^2)}^2 \label{eq:stabilitymajorationT2} \\
& \quad \quad \quad \quad \quad \quad + \eta \| \nabla(u^g - u^f)(t) \|^2_{H^{\frac{1}{2}}(\T^2)} + \| (j_{g-f} - \rho_{g-f}u^g)(t) \|^2_{L^2(\T^2)}. \nonumber 
\end{align}

\vspace{0.5em}

\textbf{Conclusion.} Integrating (\ref{eq:energyestimatedifference}) with respect to time yields
\begin{align*}
& \| (u^g - u^f)(t) \|_{H^{\frac{1}{2}}}^2 +  \int_0^t \| \nabla (u^g - u^f)(s) \|_{H^{\frac{1}{2}}}^2 \ud s   \\ 
& \quad \quad \quad \quad \quad \quad  \quad \leq \|u^g_0 - u^f_0\|^2_{H^{\frac{1}{2}}} + \int_0^t \mathscr{T}_1(s) \ud s + \int_0^t \mathscr{T}_2(s) \ud s. 
\end{align*} Thus, from (\ref{eq:stabilitymajorationT1}) and (\ref{eq:stabilitymajorationT2}), and choosing $\eta \in (0,1)$ small enough, this gives 
\begin{align*}
& \|(u^g-u^f)(t)\|^2_{H^{\frac{1}{2}}} + \frac{1}{2} \int_0^t \|\nabla(u^g-u^f)(s)\|^2_{H^{\frac{1}{2}}} \ud s \\
& \quad \quad \quad \quad \lesssim \| u^g_0 - u^f_0 \|^2_{H^{\frac{1}{2}}}  + \int_0^t \|j_{g-f}(s) - \rho_{g-f}(s)u^g(s) \|^2_{L^2}  \ud s \\
& \quad \quad \quad \quad \quad  + \int_0^t \left( 1 + \|\rho_f(s) \|_{L^{\infty}} + \frac{1}{\eta}(1 + \|\rho_f(s)\|_{L^{\infty}}^2 )  \right) \| (u^g - u^f)(s) \|^2_{H^{\frac{1}{2}}} \ud s 
\end{align*} By Gronwall's inequality, we obtain (\ref{eq:stabilityH32}).

\end{proof}


\section{Proof of Theorem \ref{thm:controllability}. Controllability.}
\label{sec: fixedpointrelevantVNS}
Let us denote by $g$ the fixed point of the operator $\V_{\epsilon}$, found in Proposition \ref{proposition:fixedpoint}, for $\epsilon \leq \epsilon_0$. The purpose of the following result is to establish that, choosing $\epsilon \leq \epsilon_1$ possibly smaller, the characteristics associated to $-v + u^g$ meet $H_{\delta}$ with sufficient speed.

\begin{proposition}
There exists $\epsilon_1>0$ and $M_1>0 $ such that for any $\epsilon \leq \epsilon_1$ and $M\leq M_1$ the characteristics associated to $-v + u^g$, where $u^g$ is given by Proposition \ref{proposition:existenceNSg}, namely $(X^g,V^g)$, satisfy the following property
\begin{equation}
\forall (x,v) \in \T^2\times \R^2, \, \exists t\in \left[ \frac{T}{48}, \frac{47T}{48} \right] \textrm{ s.t. } (X^g,V^g)(t,0,x,v) \in \gamma^{3-}, \label{eq:characteristicsarerelevant} \\
\end{equation} where $\gamma^{3-}$ is defined in (\ref{eq:gamma3}).
\label{proposition:fixedpointisrelevant}
\end{proposition}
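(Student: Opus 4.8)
The plan is to compare the characteristics $(X^g,V^g)$ of the perturbed field $-v+u^g$ with the reference characteristics $(\ovl X,\ovl V)$ of $-v+\ovl u$ furnished by Proposition \ref{proposition:referencetrajectory}, which satisfy (\ref{eq:referencecharacteristics}): for every $(x,v)$ there is $t^*\in[\frac{T}{12},\frac{11T}{12}]$ with $\ovl X(t^*,0,x,v)\in H$ and $|\ovl V(t^*,0,x,v)\cdot n_H|\geq 5$. Since membership in $\gamma^{3-}$ only requires being on $\partial H_\delta$ with inward normal speed at least $2$, and since $\delta\leq\tfrac12$ while the reference crosses the central line $H$ with normal speed at least $5$, it suffices to prove that, for $M$ and $\epsilon$ small, the two flows are uniformly close:
\begin{equation*}
\sup_{(x,v)\in\T^2\times\R^2}\ \sup_{t\in[0,T]}\left(|X^g-\ovl X|+|V^g-\ovl V|\right)(t,0,x,v)\ \leq\ C'(M+\epsilon),
\end{equation*}
with $C'$ independent of $g$. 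The whole argument then reduces to a transversality statement for the reference flow, perturbed by a small error.

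First I would quantify the closeness of the velocity fields. Taking $f=\ovl f$ and $u^f=\ovl u$ in Proposition \ref{proposition:stabilityH32} (legitimate, since only the Navier--Stokes equations for $u^g$ and $\ovl u$ and the weighted pointwise bounds on $g,\ovl f$ enter its proof; the former hold by Proposition \ref{proposition:existenceNSg} and (\ref{eq:champLipschitz}) together with the smoothness of $\ovl u$, and the latter by Corollary \ref{corollary:regularityofg} under (\ref{eq:conditionforuniqueness}) and the Schwartz regularity of $\ovl f$), and using that $\rho_{\ovl f}\equiv 0$ so that the Gronwall constant $C(t)=t\leq T$ is independent of $g$, I get
\begin{equation*}
\|(u^g-\ovl u)(t)\|_{H^{\frac12}}^2+\int_0^t\|\nabla(u^g-\ovl u)(s)\|_{H^{\frac12}}^2\ud s\ \leq\ e^{T}\Big(\|u_0\|_{H^{\frac12}}^2+\int_0^t\|j_{g-\ovl f}-\rho_{g-\ovl f}\,\ovl u\|_{L^2}^2\ud s\Big).
\end{equation*}
Here $\|u_0\|_{H^{\frac12}}\leq M$ by (\ref{eq:massechampinitiale}), while point (b) of (\ref{eq:domainoftheoperator}) together with (\ref{eq:smalldata}) bounds $\|j_{g-\ovl f}\|_{L^\infty_tL^2_x}$ and $\|\rho_{g-\ovl f}\|_{L^\infty(Q_T)}$ by a constant times $\epsilon$, and $\ovl u$ is bounded; hence the right-hand side is $\lesssim M^2+\epsilon^2$. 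Since in dimension two $H^{\frac32}(\T^2)\hookrightarrow L^\infty(\T^2)$ and $\|w\|_{H^{\frac32}}^2\lesssim\|w\|_{H^{\frac12}}^2+\|\nabla w\|_{H^{\frac12}}^2$, integrating in time yields $\|u^g-\ovl u\|_{L^2(0,T;L^\infty(\T^2))}\lesssim M+\epsilon$, uniformly in $g\in\S_\epsilon$.

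Next I would run a Gronwall comparison of the two flows. Subtracting the characteristic systems and writing $|u^g(t,X^g)-\ovl u(t,\ovl X)|\leq\|\nabla u^g(t)\|_{L^\infty}|X^g-\ovl X|+\|(u^g-\ovl u)(t)\|_{L^\infty}$, the quantity $\Phi:=|X^g-\ovl X|+|V^g-\ovl V|$ satisfies $\Phi'\lesssim(1+\|\nabla u^g(t)\|_{L^\infty})\Phi+\|(u^g-\ovl u)(t)\|_{L^\infty}$ with $\Phi(0)=0$. Since $u^g\in L^2(0,T;\C^1(\T^2))$ with a bound uniform in $g$ by (\ref{eq:champLipschitz}), the integrating factor $\exp(\int_0^T(1+\|\nabla u^g(s)\|_{L^\infty})\ud s)$ is bounded uniformly, and by Cauchy--Schwarz $\int_0^T\|(u^g-\ovl u)\|_{L^\infty}\leq\sqrt{T}\,\|u^g-\ovl u\|_{L^2_tL^\infty_x}$. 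This produces the displayed uniform closeness, with $C'=C'(T,\gamma,\ovl f,M_1)$ once $M_1$ is fixed so that the $g$-uniform bounds are definite.

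Finally I would conclude geometrically. Fix $(x,v)$ and the time $t^*$ given by (\ref{eq:referencecharacteristics}); assume $\ovl V(t^*)\cdot n_H\leq-5$ (the opposite sign is symmetric). Choosing $\epsilon_1,M_1$ so small that $C'(M+\epsilon)\leq\min\{2,\tfrac{\delta}{2}\}$, closeness gives $|X^g(t^*)\cdot n_H|<\delta$ and $V^g(t^*)\cdot n_H\leq-4$, so the perturbed characteristic sits inside $H_\delta$ moving inward with normal speed at least $4$. Tracking it backward from $t^*$, the signed distance $X^g(t)\cdot n_H$ increases and must meet the level $+\delta$, that is $\partial H_\delta$, at some $\tilde t<t^*$; at $\tilde t$ the inward normal velocity still exceeds $2$ in modulus, so (with $n_H^{ext}=+n_H$ on that boundary component) $V^g(\tilde t)\cdot n_H^{ext}\leq-2$ and $(X^g,V^g)(\tilde t,0,x,v)\in\gamma^{3-}$. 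The main obstacle, and the step requiring genuine care, is to guarantee that the reference normal speed stays above $4$ throughout this short crossing and that $\tilde t$ remains in $[\frac{T}{48},\frac{47T}{48}]$: this I would establish from the explicit form of $\ovl u$ (identically $0$ on $[0,T_1]$ in the high-velocity case and identically $n_H$ on $[T_2,T_3]$ in the low-velocity case, following the two cases of Proposition \ref{proposition:referencetrajectory}), for which the time needed to cross the strip is $O(\delta)$ and the normal speed varies by $O(\delta)$, together with the largeness of $T\geq T_0$ (enlarging $T_0$ if necessary), which absorbs the $O(\delta)+O(C'(M+\epsilon))$ displacement of $\tilde t$ away from $t^*\in[\frac{T}{12},\frac{11T}{12}]$.
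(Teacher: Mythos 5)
Your proposal is correct and follows essentially the same route as the paper: a stability estimate for $u^g-\ovl{u}$ in $L^2_tL^{\infty}_x$ obtained from Proposition \ref{proposition:stabilityH32} (yielding a bound $\lesssim M+\epsilon$), a Gronwall comparison of the two characteristic flows, and a backward-in-time crossing argument landing on $\gamma^{3-}$. The one step you flag as delicate --- controlling the normal speed and the timing during the crossing of $\partial H_{\delta}$ --- is resolved in the paper not via the explicit form of $\ovl{u}$ but directly from the uniform bound $\|u^g\|_{L^2_tL^{\infty}_x}\leq K_1$ together with the choice of $\delta$ in (\ref{eq:choiceofdelta}) (in particular $\delta\leq e^{T/200}-1$ and $\delta\leq 1/(4K_1^2)$), using the explicit shift $s_0=\log(1+\delta)$ in the characteristic formula (\ref{eq:characteristics}).
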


\begin{proof}
In order to prove (\ref{eq:characteristicsarerelevant}), we proceed in two steps. \par 

\vspace{0.5em}
\textit{Step 1. Stability argument.} We shall show that the characteristics $(X^g,V^g)$ are uniformly close to $(\ovl{X},\ovl{V})$ whenever $\epsilon$ and $M$ are chosen sufficiently small.   \par 

Indeed, from (\ref{eq:characteristics}), we have, for every $(t,x,v) \in Q_T$,
\begin{align}
& \ovl{X}(t,0,x,v) - X^g(t,0,x,v) \label{eq:XovlminusXg} \\
& \quad \quad \quad = \int_0^t \int_0^s e^{\tau - s} \left( \ovl{u}(\tau, \ovl{X}(\tau, 0, x, v)) - u^g(\tau, X^g(\tau,0,x,v))      \right) \ud \tau \ud s\nonumber \\
& \quad \quad \quad = \mathcal{J}_1 + \mathcal{J}_2, \nonumber 
\end{align} with
\begin{align*}
\mathcal{J}_1 & := \int_0^t \int_0^s e^{\tau - s}\left( \ovl{u}(\tau, \ovl{X}(\tau, 0, x, v) - \ovl{u}(\tau, X^g(\tau, 0, x, v) \right) \ud \tau \ud s,  \\
\mathcal{J}_2 & := \int_0^t \int_0^s e^{\tau - s}\left( \ovl{u}(\tau, X^g(\tau, 0, x, v) - u^g(\tau, X^g(\tau, 0, x, v) \right) \ud \tau \ud s. 
\end{align*} For the first term above, using (\ref{eq:regularityofthereferencefield}), we find
\begin{equation}
|\mathcal{J}_1| \leq C(T,\ovl{u}) \int_0^t \left| \ovl{X}(\tau,0,x,v) - X^g(\tau,0,x,v) \right| \ud \tau.
\label{eq:jota1estimate}
\end{equation} For the second term above, we have
\begin{equation}
|\mathcal{J}_2| \leq C(T) \| \ovl{u} - u^g \|_{L^2_t L^{\infty}_x}.
\label{eq:jota2estimate}
\end{equation} Consequently, we have to obtain a precise estimate of the difference $u^g -\ovl{u}$ in $L^2_t L^{\infty}_x$. \par 
In order to do this, we shall use Proposition \ref{proposition:stabilityH32} with the solutions $u^g$ and $\ovl{u}$. Let us observe that, thanks to (\ref{eq:conditionforuniqueness}), (\ref{eq:classeunicite}) and (\ref{eq:regularityofthereferencedistribution}), hypothesis (\ref{eq:hypothesestabilite}) is staisfied in this case. Thus, (\ref{eq:stabilityH32}) yields
\begin{align}
& \| (u^g - \ovl{u})(t) \|^2_{H^{\frac{1}{2}}} + \int_0^t \| \nabla (u^g - \ovl{u})(s) \|^2_{H^{\frac{1}{2}}} \ud s \label{eq:stabilitereference} \\
& \quad \quad \quad \quad \quad \quad \quad \lesssim \|u_0\|^2_{H^{\frac{1}{2}}} + \int_0^t \| j_{g - \ovl{f}} - \rho_gu^g \|^2_{L^2} \ud s, \nonumber
\end{align} thanks to (\ref{eq:referencezeroatendandbeginning}) and the fact that $\rho_{\ovl{f}} \equiv 0$. \par 
Firstly, we observe that, thanks to (\ref{eq:lemma412}),
\begin{equation*}
\| j_{g - \ovl{f}} \|^2_{L^2}  = \int_{\T^2} \left| \int_{\R^2} v (g - \ovl{f}) \ud v   \right|^2 \ud x \lesssim \epsilon^2 
\end{equation*} Secondly, using point (a) and (\ref{eq:estimationuniformeNSg}),
\begin{align*}
\sup_{t\in [0,T]} \|\rho_g (t) \|^2_{L^2(\T^2)} \| u^g(t)\|_{L^2(\T^2)}^2 & \leq 2c_3^2 \epsilon^2 e^T \left( M^2 + T(1 + \|j_{\ovl{f}}\|^2_{L^{\infty}_tL^2_x}) \right) \\
& \lesssim \epsilon^2.
\end{align*} Then, for any $t\in (0,T]$,
\begin{equation*}
\int_0^t \| j_{g-\ovl{f}} - \rho_g u^g \|_{L^2(\T^2)}^2 \ud s \leq \epsilon^2,
\end{equation*} Thus,  
\begin{align*}
\| (\ovl{u} - u^g )(t)\|_{H^{\frac{1}{2}}}^2 + \frac{1}{2}\int_0^t \| \nabla (\ovl{u} - u^g ) (s)\|^2_{H^{\frac{1}{2}}} \ud s & \lesssim \|u_0\|_{H^{\frac{1}{2}}}^2 + \epsilon^2 \\
&  \lesssim M^2 + \epsilon^2,
\end{align*} thanks to (\ref{eq:massechampinitiale}). To conclude, using the injection $H^{\frac{3}{2}}(\T^2) \hookrightarrow L^{\infty}(\T^2)$, we deduce
\begin{align*}
& \int_0^T \| (\ovl{u} - u^g)(t)\|_{L^{\infty}(\T^2)}^2 \ud t \\
& \quad \quad \quad \quad \lesssim \int_0^T \| (\ovl{u} - u^g)(t)\|^2_{H^{\frac{3}{2}}} \ud t \lesssim M^2 + \epsilon^2,
\end{align*} which gives the $L^2_t L^{\infty}_x$ estimate. \par 

We have, from (\ref{eq:XovlminusXg}), (\ref{eq:jota1estimate}), (\ref{eq:jota2estimate}) and the previous inequalities,
\begin{align*}
& \left| \ovl{X}(t,0,x,v) - X^g(t,0,x,v)   \right| \\
& \quad \quad \quad \quad \leq |\mathcal{J}_1| + |\mathcal{J}_2| \\
& \quad \quad \quad \quad \lesssim \int_0^t \left| \ovl{X}(s,0,x,v) - X^g(s,0,x,v)   \right| \ud s + \| \ovl{u} - u^g \|_{L^2_x L^{\infty}_x} \\
& \quad \quad \quad \quad \lesssim \int_0^t \left| \ovl{X}(s,0,x,v) - X^g(s,0,x,v)   \right| \ud s + \sqrt{ M^2 + \epsilon^2}.
\end{align*} Whence, thanks to Gronwall's lemma,
\begin{equation}
\sup_{Q_T} |(\ovl{X} - X^g)(t,0,x,v)| \lesssim \sqrt{ M^2 + \epsilon^2 }.
\label{eq:stabiliteMepsilonenespace}
\end{equation} Using (\ref{eq:characteristics}), we also deduce
\begin{equation}
\sup_{Q_T} |(\ovl{V} - V^g)(t,0,x,v)| \lesssim \sqrt{ M^2 + \epsilon^2 }.
\label{eq:stabiliteMepsilonenvitesse}
\end{equation}


\vspace{0.5em}
\textit{Step 2. Conclusion.} 
Let us choose $\epsilon>0$ and $M>0$ small enough so that 
\begin{equation*}
\sup_{Q_T}\| (\ovl{X}  - X^g)(t,x,v) \| + \sup_{Q_T} \| (\ovl{V}-V^g)(t,x,v) \| < \frac{\delta}{2},
\end{equation*} with the choice (\ref{eq:choiceofdelta}). Thus, combining (\ref{eq:referencecharacteristics}) with (\ref{eq:stabiliteMepsilonenespace}) and (\ref{eq:stabiliteMepsilonenvitesse}), we obtain the following property.
\begin{align}
\textrm{For every } &  (x,v)\in \T^2 \times \R^2, \textrm{ there exists } t_{(x,v)} \in \left[ \frac{T}{48}, \frac{47T}{48}  \right] \textrm{such that} \nonumber \\
& \quad \quad \quad \quad  X^g(t_{(x,v)},0,x,v) \in H_{\frac{\delta}{2}}, \label{eq:Xgdansdeltademi} \\
& \quad \quad \quad \quad  \left| V^g(t_{(x,v)},0,x,v) \cdot n_H \right| > \frac{7}{2}. \label{eq:Vgvitessesuffisante}
\end{align} We shall show that this entails (\ref{eq:characteristicsarerelevant}). \par 

Indeed, let us set 
\begin{equation*}
s_0 := \log(1 + \delta).
\end{equation*} Let $(x,v) \in \T^2 \times \R^2$ and let $t_{(x,v)}$ be given by (\ref{eq:Xgdansdeltademi}). Then, we have the following
\begin{align*}
& \left| \left( X^g(t_{(x,v)},0,x,v) + (1-e^{s_0})V^g(t_{(x,v)},0,x,v)   \right)\cdot n_H       \right| \\
& \quad \quad \quad \quad \quad \geq - |X^g(t_{(x,v)},0,x,v)\cdot n_H| + (e^{s_0}-1)|V^g(t_{(x,v)},0,x,v)\cdot n_H| \\
& \quad \quad \quad \quad \quad \geq -\frac{\delta}{2} + \frac{7}{2}(e^{s_0} - 1) \\
& \quad \quad \quad \quad \quad > 2 \delta, 
\end{align*} using (\ref{eq:Xgdansdeltademi}), (\ref{eq:Vgvitessesuffisante}) and the choice (\ref{eq:choiceofdelta}). Thus, thanks to the previous inequality and (\ref{eq:characteristics}), and denoting $X^g(t):=X(t,0,x,v)$ and $V^g(t) :=V^g(t,0,x,v) $ for $(x,v)$ fixed, we get
\begin{align*}
& \left| \left( X^g(t_{(x,v)}) + (1-e^{s_0}) V^g(t_{(x,v)}) - X^g(t_{(x,v)} - s_0)     \right)\cdot n_H   \right| \\
& = \left| \left( X^g(t_{(x,v)}) + (1-e^{s_0}) V^g(t_{(x,v)}) - X^g(t_{(x,v)} - s_0,t_{(x,v)},(X^g,V^g)(t_{(x,v)})) \right)\cdot n_H   \right|    \\
& = \left| \int_{t_{(x,v)}} ^{t_{(x,v)} - s_0 } \int_{t_{(x,v)}}^{\eta} e^{z - \eta } u^g(z, X^g(z)) \ud z \ud \eta  \right| \\ 
& \leq s_0^{\frac{3}{2}} \|u^g\|_{L^2L^{\infty}_x} \\
& < \delta,
\end{align*} thanks to Cauchy-Schwarz's inequality and (\ref{eq:choiceofdelta}). Let us observe that this entails that 
\begin{equation*}
X^g(t_{(x,v)}- s_0,0,x,v) \not \in H_{\delta}.
\end{equation*}  Combining this with (\ref{eq:Xgdansdeltademi}) and thanks to the intermediate value theorem, we deduce that there exists $\sigma_{(x,v)} \in (t_{(x,v)} - s_0, t_{(x,v)})$ such that
\begin{equation}
X^g(\sigma_{(x,v)},0,x,v) \in \partial H_{\delta}.
\label{eq:sigmaxv}
\end{equation} Let us observe that, thanks to (\ref{eq:choiceofdelta}), 
\begin{equation}
\sigma_{(x,v)} \in \left[ \frac{T}{48} , \frac{47T}{48}  \right], \quad \forall (x,v) \in \T^2 \times \R^2.
\label{eq:sigmaxvontime}
\end{equation} Finally, thanks to (\ref{eq:characteristics}) and (\ref{eq:Vgvitessesuffisante}), we deduce
\begin{align*}
& | V^g(\sigma_{(x,v)},0,x,v) \cdot n_H | \\
& \quad \quad = \left| V^g(\sigma_{(x,v)}, t_{(x,v)}, (X^g,V^g)(t_{(x,v)},0,x,v)) \cdot n_H  \right| \\
& \quad \quad \geq e^{t_{(x,v)} - \sigma_{(x,v)} } |V^g(t_{(x,v)},0,x,v)| - \sqrt{s_0} \|u^g\|_{L^2_t L^{\infty}_x} \\
& \quad \quad \geq 3,
\end{align*} where we have used the Cauchy-Schwarz's inequality and (\ref{eq:choiceofdelta}). Thus, combining last inequality with (\ref{eq:sigmaxv}) and (\ref{eq:sigmaxvontime}), we find (\ref{eq:characteristicsarerelevant}).

\end{proof}

\begin{proof}[Proof of Theorem \ref{thm:controllability}. Controllability part.] \par
Let us choose $\epsilon \leq \min\left\{ \epsilon_0,\epsilon_1 \right\}$, where $\epsilon_0$ is given by Proposition \ref{proposition:fixedpoint}, and $\epsilon_1$ is given by Proposition \ref{proposition:fixedpointisrelevant}. Let us consider $T\geq T_0$, according to Proposition \ref{proposition:referencetrajectory}, and $\tau_1,\tau_2 >0$. Thus, define 
\begin{equation*}
T_f = T + \tau_1 + \tau_2.
\end{equation*} We shall prove that (\ref{eq:nullcontrollability}) holds in a large enough time $T_f >0$, which will be done in three steps. \par

\vspace{0.5em}
\textit{Step 1. From the initial configuration to a confinement in} $\omega$. \par 
By the choice of $\epsilon$, we apply Proposition \ref{proposition:fixedpoint} in time $T>0$ large enough, which provides a fixed point of $\V_{\epsilon}$, that we denote $g^*$ and a strong solution $(g^*,u^{g^*})$ of (\ref{eq:VNS}) for some $G\in \C^0([0,T]\times \T^2 \times \R^2)$. We observe that from the construction of $\V_{\epsilon}$ and (\ref{eq:referencezeroatendandbeginning}), we have
\begin{equation}
\V_{\epsilon}[g^*](T,x,v):= \Pi\left( \tilde{\V}_{\epsilon}[g^*]_{|([0,T]\times (\T^2 \setminus H_{\delta}) \times \R^2)\cup([0,\frac{T}{48}] \times \T^2 \times \R^2)} \right)(T,x,v).
\end{equation} In particular, it comes from the definition of $\Pi$ that 
\begin{align}
& \V_{\epsilon}[g^*](T,x,v) = \tilde{\V}_{\epsilon}[g^*](T,x,v), & \forall (x,v) \in (\T^2 \setminus \omega) \times \R^2, \label{eq:confinement} \\
& \V_{\epsilon}[g^*](0,x,v) = f_0(x,v), &  \forall (x,v) \in \T^2\times \R^2. \label{eq:initialconfiguration} 
\end{align} Moreover, by (\ref{eq:absortion}) and (\ref{eq:explicitsolutionVNS}),
\begin{equation*}
\tilde{\V}_{\epsilon}[g^*](T,x,v) = e^{2T} f_0((X^{g^*},V^{g^*})(0,T,x,v)).
\end{equation*} Hence, since $ \epsilon \leq \epsilon_1$, Proposition \ref{proposition:fixedpointisrelevant} applies, which implies, thanks to the absorption procedure described by (\ref{eq:absortion}) and (\ref{eq:absortionfunction}) that $\tilde{\V}_{\epsilon}[g](T,x,v) =0$ in $(\T^2\setminus \omega) \times \R^2$. Thus, by (\ref{eq:confinement}), we get
\begin{equation}
g^*(T,x,v) = 0, \quad \forall (x,v) \in (\T^2 \setminus \omega) \times \R^2. \label{eq:confinementfinale}
\end{equation} 

\vspace{0.5em}
\textit{Step 2. From the confinement in }$\omega$\textit{ to the zero distribution.}\par 
Let $\zeta \in \C^{\infty}(\R)$ such that $0\leq \zeta \leq 1$ and
\begin{align}
& \zeta(t) = 1, \quad  \forall t\leq 0, \label{eq:focntionrecollementdebut} \\
& \zeta(t) = 0, \quad  \forall t \geq 1. \label{eq:fonctionrecollementfinal}  
\end{align} Then, let us define
\begin{equation*}
f_{\flat}(t,x,v):= \zeta \left(\frac{t}{\tau_1} \right) g^*(T,x,v), \quad \forall (t,x,v) \in (0,\tau_1)\times \T^2 \times \R^2. 
\end{equation*} Thus,
\begin{equation*}
f_{\flat}|_{t=0} = g^*|_{t=T}, \quad \textrm{ and } \quad f_{\flat}|_{\tau_1} = 0,
\end{equation*} We associate to $f_{\flat}$ the velocity field $u_{\flat}$ obtained by solving the associated system (\ref{eq:NavierStokesforg}), which is possible thanks to Proposition \ref{proposition:existenceNSg}, as $f_{\flat}$ has the same regularity in $(x,v)$ as $g^*$ at any time $\tau_1$. We observe that $u_{\flat}(0)=u^{g^*}(T)$ by construction.

\vspace{0.5em}
\textit{Step 3. From the zero distribution to the stationary fluid.} \par 
According to Step 2, after a time $\tau_1$, the field evolves from $u^{g^*}(T)$ to $u_{\flat}(\tau_1)$. Then, Theorem \ref{thm:CoronFursikov} allows to modify the field in order to reach zero in time $\tau_2$. Meanwhile, the distribution function can be modified accordingly, keeping the particles confined in $\omega$. Let us apply Theorem \ref{thm:CoronFursikov} with 
\begin{equation*}
\tau = \tau_2, \quad M= \T^2, \quad M_0 = \omega, \quad \hat{y} \equiv 0, \quad y_0 = u_{\flat}(\tau_1). 
\end{equation*} This provides a control $w_{\sharp}\in \C^{\infty}([0,\tau_2]\times \T^2)$ such that 
\begin{equation}
\supp w_{\sharp} \subset (0,\tau_2) \times \omega
\label{eq:controlconfined}
\end{equation} and such that the solution of
\begin{equation}
\left\{  \begin{array}{ll}
\partial_t u_{\sharp} + u_{\sharp}\cdot \nabla u_{\sharp} - \Delta u_{\sharp} + \nabla p_{\sharp} = w_{\sharp}, & (t,x) \in (0,\tau_2) \times \T^2, \\
\div_x u_{\sharp} = 0, & (t,x) \in (0,\tau_2) \times \T^2, \\
u_{\sharp}|_{t=0} = u_{\flat}(\tau_1), & x \in \T^2,
\end{array} \right.
\end{equation} satisfies 
\begin{equation}
u_{\sharp}|_{t=\tau_2} \equiv 0.
\label{eq:champdevitessefinale}
\end{equation} We thus define the associated distribution function as 
\begin{equation*}
f_{\sharp}(t,x,v) := (\mathcal{Z}_1,\mathcal{Z}_2)(v)\cdot w_{\sharp}(t,x), \quad \forall (t,x,v) \in (0,\tau_2) \times \T^2 \times \R^2,
\end{equation*} where $\mathcal{Z}_1$ and $\mathcal{Z}_2$ are given by (\ref{eq:Z1integral}) and (\ref{eq:Z2integral}). As a consequence of (\ref{eq:controlconfined}) and Step 2,
\begin{equation*}
f_{\sharp}|_{t=0} = f_{\flat}|_{t=\tau_1} = 0, \quad f_{\sharp}|_{t=\tau_2} = 0. 
\end{equation*}

\vspace{0.5em}
\textit{Conclusion.}
We put together these steps to construct a suitable solution. Let us define
\begin{equation}
f(t) := \left\{  \begin{array}{ll}
g^*(t), & t \in [0,T), \\
f_{\flat}(t-T), & t \in [T,T+\tau_1), \\
f_{\sharp}(t-T-\tau_1), & t\in [T+\tau_1, T + \tau_1 + \tau_2], 
\end{array} \right.
\end{equation} and 
\begin{equation}
u(t) := \left\{  \begin{array}{ll}
u^{g^*}(t), & t \in [0,T), \\
u_{\flat}(t-T), & t \in [T,T+\tau_1), \\
u_{\sharp}(t-T-\tau_1), & t\in [T+\tau_1, T + \tau_1 + \tau_2]. 
\end{array} \right.
\end{equation} According to the previous arguments, $(f,u)$ is a strong solution of (\ref{eq:VNS}) and satisfies (\ref{eq:nullcontrollability}).
\end{proof}

\section{Proof of Theorem \ref{thm:controllability}. Uniqueness.}
\label{sec: uniquenessVNS}

The goal of this section is to show that the strong solution of (\ref{eq:VNS}) obtained in Section \ref{sec: fixedpointVNS} is unique within a certain class. \par 

In Corollary \ref{corollary:regularityofg} it is proved that the solution of system (\ref{eq:VNS}) obtained by the fixed-point procedure of Section \ref{sec: fixedpointVNS}, (see Proposition \ref{proposition:fixedpoint}) enjoys some regularity properties. Next result, inspired from \cite[Section 8]{UkaiOkabe}, shows that the solution in this class is unique. 

\begin{proposition}
Let $f_0 \in \C^1(\T^2\times \R^2)$ satisfying (\ref{eq:conditionforuniqueness}) and let $G \in \C^0(Q_T)$. Then, the strong solution of system (\ref{eq:VNS}), according to Definition \ref{definition:strong}, satisfying conditions (\ref{eq:regulariteunicite}) and (\ref{eq:classeunicite}) is unique.
\label{proposition:uniqueness}
\end{proposition}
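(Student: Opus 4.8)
The plan is to prove uniqueness by considering two strong solutions $(f_1,u_1)$ and $(f_2,u_2)$ of \eqref{eq:VNS} with the same data $f_0$, $u_0$ and the same control $G$, both satisfying \eqref{eq:regulariteunicite} and \eqref{eq:classeunicite}, and to show that their difference vanishes. The key observation is that since both distribution functions are built from the same initial datum $f_0$ by transport along their respective characteristics (plus the common source $1_\omega G$), controlling the difference of the velocity fields will control the difference of the $f_i$, and vice versa; this coupling is exactly what Proposition \ref{proposition:stabilityH32} is designed to close. So first I would verify that the regularity class \eqref{eq:classeunicite} guarantees that hypothesis \eqref{eq:hypothesestabilite} of Proposition \ref{proposition:stabilityH32} holds for both solutions, with a uniform constant $\kappa$: indeed $(1+|v|)^{\gamma+1}(|f_i|+|\nabla_x f_i|)\le \kappa'$ by \eqref{eq:classeunicite}, so the required bound on $|f_1|+|f_2|+|\nabla_x f_1|+|\nabla_x f_2|$ is immediate. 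Since the initial fields coincide, $u_1|_{t=0}=u_2|_{t=0}=u_0$, the stability estimate \eqref{eq:stabilityH32} reduces to
\begin{equation*}
\|(u_1-u_2)(t)\|_{H^{\frac12}}^2 + \int_0^t \|\nabla(u_1-u_2)(s)\|_{H^{\frac12}}^2 \ud s \leq e^{C(t)} \int_0^t \|j_{f_1-f_2} - \rho_{f_1-f_2}u_2\|_{L^2}^2 \ud s,
\end{equation*}
where $C(t)$ is finite because $\rho_{f_i}\in L^\infty(\Omega_T)$ thanks to \eqref{eq:classeunicite}.

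Next I would estimate the right-hand side, that is the moments $j_{f_1-f_2}$ and $\rho_{f_1-f_2}$, in terms of $u_1-u_2$. Writing $f_i(t,x,v)=e^{2t}f_0((X_i,V_i)(0,t,x,v)) + (\text{same source integral along the two flows})$ via \eqref{eq:explicitsolutionVNS} applied on the two different characteristic flows associated to $-v+u_1$ and $-v+u_2$, the difference $f_1-f_2$ is governed by the difference of the two backward characteristics, which in turn is controlled by $\|u_1-u_2\|_{L^2_tL^\infty_x}$ through a Gronwall argument on \eqref{eq:characteristics}, exactly as in Step 1 of the proof of Proposition \ref{proposition:fixedpointisrelevant} (the estimates \eqref{eq:stabiliteMepsilonenespace}--\eqref{eq:stabiliteMepsilonenvitesse}). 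The weight $(1+|v|)^{\gamma+1}$ with $\gamma>2$ from \eqref{eq:classeunicite}, together with the Lipschitz bound on $f_0$ and on the flows from Lemma \ref{lemma:Gronwall}, gives integrability in $v$ and yields a pointwise bound of the form $|(f_1-f_2)(t,x,v)|\lesssim (1+|v|)^{-(\gamma+1)}\sup_{[0,t]}|(X_1-X_2,V_1-V_2)|$; integrating against $1$ and $|v|$ in $v$ then produces
\begin{equation*}
\|\rho_{f_1-f_2}(t)\|_{L^2(\T^2)}^2 + \|j_{f_1-f_2}(t)\|_{L^2(\T^2)}^2 \lesssim \int_0^t \|(u_1-u_2)(s)\|_{L^\infty(\T^2)}^2 \ud s,
\end{equation*}
where the $L^\infty_x$ norm is absorbed into $\|\nabla(u_1-u_2)\|_{H^{\frac12}}$ via $H^{\frac32}(\T^2)\hookrightarrow L^\infty(\T^2)$, the same embedding used throughout Section \ref{sec: fixedpointVNS}. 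The term $\rho_{f_1-f_2}u_2$ is handled by Hölder, using $\|u_2\|_{L^\infty}$ which is finite by the $\C^0_tH^2_x$ regularity coming from \eqref{eq:champregulier} and the parabolic bootstrap \eqref{eq:stabilityregulariteplusplus}.

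Substituting these bounds back into the stability estimate, I would obtain a closed inequality of the form
\begin{equation*}
\|(u_1-u_2)(t)\|_{H^{\frac12}}^2 + \int_0^t \|\nabla(u_1-u_2)(s)\|_{H^{\frac12}}^2 \ud s \lesssim \int_0^t \left( \int_0^s \|(u_1-u_2)(\sigma)\|_{H^{\frac32}}^2 \ud\sigma \right) \ud s,
\end{equation*}
and the main obstacle is precisely to arrange this so that Gronwall applies: the right-hand side must be controlled by the dissipation term $\int_0^t\|\nabla(u_1-u_2)\|_{H^{\frac12}}^2$ on the left, which requires the moment estimates to produce $\|u_1-u_2\|_{H^{\frac12}}$ (not $\|u_1-u_2\|_{H^{\frac32}}$) on the right after integration, so that the $\nabla(u_1-u_2)$ term can be absorbed or handled by a small-coefficient/Gronwall argument. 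Once the inequality is reduced to $\Phi(t)\le \int_0^t \Phi(s)\,\ud s$ with $\Phi(t):=\|(u_1-u_2)(t)\|_{H^{\frac12}}^2$, Gronwall's lemma forces $\Phi\equiv 0$, hence $u_1\equiv u_2$; feeding this back into the characteristic-difference estimate gives $(X_1,V_1)\equiv(X_2,V_2)$ and therefore $f_1\equiv f_2$ by \eqref{eq:explicitsolutionVNS}, completing the proof. The delicate point throughout is the bookkeeping of the velocity weights $(1+|v|)^{\gamma+1}$ to ensure all the $v$-integrals defining $\rho$ and $j$ converge and produce the correct powers, which is exactly where the hypothesis $\gamma>2$ and condition \eqref{eq:classeunicite} are used.
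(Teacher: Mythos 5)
Your overall architecture (stability estimate of Proposition \ref{proposition:stabilityH32} for the fluid part, combined with a characteristics-based estimate of the kinetic part and a Gronwall closure) is the right one and matches the paper's. But there is a genuine gap in the step where you estimate $f_1-f_2$. You propose to write $f_i(t,x,v)=e^{2t}f_0((X_i,V_i)(0,t,x,v))+\dots$ and to bound the difference by the mean value theorem applied to $f_0$ together with the difference of the two backward flows. The only decay available for $\nabla f_0$ is hypothesis (\ref{eq:conditionforuniqueness}), which gives the weight $(1+|v|)^{-\gamma}$, not the $(1+|v|)^{-(\gamma+1)}$ you assert: condition (\ref{eq:classeunicite}) concerns the solutions $f^i$ at positive times, not the initial datum, and it does not enter your decomposition as written. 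With only $(1+|v|)^{-\gamma}$ decay, the current density of the difference requires $\int_{\R^2}|v|(1+|v|)^{-\gamma}\ud v<\infty$, which fails for $\gamma\in(2,3]$; so under the standing assumption $\gamma>2$ the estimate $\|j_{f_1-f_2}(t)\|_{L^2}^2\lesssim\int_0^t\|(u_1-u_2)(s)\|_{L^\infty}^2\ud s$ does not follow from your argument. The paper avoids this by \emph{not} comparing the two flows directly: it writes the equation satisfied by $g:=f^1-f^2$, namely $\partial_t g+v\cdot\nabla_x g+\div_v[(u_1-v)g]=-w\cdot\nabla_v f^2$ with $w:=u_1-u_2$, and integrates along the single flow of $u_1$. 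The source involves $\nabla_v f^2$ at positive times, to which (\ref{eq:classeunicite}) applies with weight $\gamma+1$, yielding $\int_{\R^2}(1+|v|)\,|\nabla_v f^2|\ud v\lesssim\int_{\R^2}(1+|v|)^{-\gamma}\ud v<\infty$ precisely for $\gamma>2$. Your route could be repaired by a composition trick (writing $f_1(t,\cdot)$ as $f^2(t,\cdot)$ evaluated at a perturbed point and applying (\ref{eq:classeunicite}) to $f^2$), but that is essentially a reformulation of the Duhamel identity above and needs to be spelled out.

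A secondary, fixable issue is the closure of the Gronwall loop. Your final inequality has $\|u_1-u_2\|_{H^{3/2}}^2$ under the time integrals on the right, so the dissipation term must be absorbed into the left-hand side, which works only on a small time interval followed by iteration; you flag this but do not resolve it. The paper sidesteps it by closing the Gronwall argument at the level of the moments: from the kinetic estimate one gets $\sup_x(|j_g(t)|+|\rho_g(t)|)\lesssim\int_0^t\|w(s)\|_{L^\infty}\ud s$, and the stability estimate converts $\|w\|_{L^2_tL^\infty_x}$ back into the moments, so Gronwall gives $j_g=\rho_g\equiv 0$ first; then $w$ solves a homogeneous Navier--Stokes difference system and vanishes by Theorem \ref{thm:Leraystabilite}, after which the characteristics coincide and $f^1=f^2$.
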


\begin{proof}
Let $f_1 = \tilde{\V}_{\epsilon}[f_1]$, for $\epsilon \leq \epsilon_0$. Let us suppose that $(f^2,u_2)$ is a strong solution of system (\ref{eq:VNS}) with initial datum $f_0$ and control $G$ and such that (\ref{eq:regulariteunicite}) and (\ref{eq:classeunicite}) are satisfied. \par 
Let $w:=u_1-u_2$, $g:=f^1 - f^2$. We shall use Proposition \ref{proposition:stabilityH32} on the difference $w$. Observe that, thanks to (\ref{eq:conditionforuniqueness}) and (\ref{eq:classeunicite}), we have
\begin{equation*}
\sup_{(t,x,v)\in Q_T} (1 + |v|)^{\gamma + 1} \left( |f^1| + |f^2| + |\nabla_x f^1| + |\nabla_x f^2|    \right) < \kappa'.
\end{equation*} Thus, (\ref{eq:stabilityH32}) yields in this case
\begin{equation*}
\|w\|_{L^2_t H_x^{\frac{3}{2}}} \lesssim \left( \|j_g\|_{L^{\infty}_t L^2_x} +  \| \rho_g \|_{L^{\infty}_t L^2_x} \| u_2\|_{L^2_t L^{\infty}_x}\right).
\end{equation*} Moreover, the Sobolev embedding theorem gives, 
\begin{equation}
\|w\|_{L^2_t L^{\infty}_x} \lesssim \left( \|j_g\|_{L^{\infty}_t L^2_x} + \| \rho_g \|_{L^{\infty}_t L^2_x} \| u_2 \|_{L^2_t L^{\infty}_x}  \right).
\label{eq:majorationchampdevitesses}
\end{equation} On the other hand, we observe that condition (\ref{eq:classeunicite}) gives 
\begin{align*}
& (1+|v|)|\nabla_{x,v}f^2(t,(X^1,V^1)(0,t,x,v))|  \\
& \quad \quad \quad \quad \quad \quad \leq \frac{\kappa' (1+|v|)}{(1+ |V^1(0,t,x,v)|)^{\gamma+1}}  \\
& \quad \quad \quad \quad \quad \quad \leq \frac{C(\kappa',\gamma)}{(1 + |v|)^{\gamma}},  
\end{align*} proceeding in the same fashion as in (\ref{eq:majorationVg}). As a result,
\begin{equation}
\sup_{(t,x) \in \Omega_T}\int_{\R^2} (1+|v|) \left|\nabla_v f^2 (t,(X^1,V^1)(0,t,x,v)\right| \ud v \leq \tilde{C}(\kappa',\gamma),
\label{eq:majorationgradf2}
\end{equation} for some constant $\tilde{C}(\kappa',\gamma)>0$. \par 
Next, we observe that the difference of the distribution functions, $g$, satisfies the following Vlasov equation
\begin{equation*}
\partial_t g + v\cdot \nabla_x g + \div_v\left[  (u_1 - v ) g \right] = -w\cdot \nabla_v f^2, \quad \forall (t,x,v) \in Q_T.
\end{equation*} Consequently, by the method of characteristics, we have
\begin{eqnarray}
|g(t,x,v)| &\leq & e^{2T} \left|\int_0^t w(s,X^1(0,s,x,v))\cdot \nabla_v f^2(s,(X^1,V^1)(0,s,x,v) \ud s  \right| \nonumber \\
& \lesssim & \int_0^t \|w(s,\cdot)\|_{L^{\infty}_x} \left|\nabla_v f^2(s,(X^1,V^1)(0,s,x,v)\right| \ud s. \nonumber
\end{eqnarray} Thus,
\begin{equation*}
(1+|v|)|g(t,x,v)| \leq \int_0^t \|w(s,\cdot)\|_{L^{\infty}_x} (1+|v|)\left|\nabla_v f^2(s,(X^1,V^1)(0,s,x,v)\right| \ud s,
\end{equation*} which implies, thanks to (\ref{eq:majorationgradf2}) and (\ref{eq:majorationchampdevitesses}),
\begin{eqnarray}
\sup_{x \in \T^2 } \left( |j_g(t,x)| + |\rho_g(t,x)|\right) &\lesssim & \int_0^t \|w(s,\cdot)\|_{L^{\infty}_x} \ud s \nonumber \\
& \lesssim &  \int_0^t ( \| j_g(s)\|_{L^2_x} + \|\rho_g(s) \|_{L^2_x} ) \ud s \nonumber \\
& \lesssim &  \int_0^t \sup_{x\in \T^2 } \left( \left|j_g(s)\right| + \left| \rho_g(s)  \right|  \right) \ud s, \quad \forall t\in [0,T], \nonumber
\end{eqnarray} which, by Gronwall's lemma entails, since $\rho_g(0) = j_g(0) = 0$, that
\begin{equation*}
\rho_g (t,x) = 0, \quad j_g(t,x) = 0, \quad \forall (t,x) \in \Omega_T.
\end{equation*} Moreover, we deduce from this that the difference $w (t)= (u_1-u_2)(t)$ satisfies, for every $t \in [0,T]$,
\begin{equation*}
\left\{ \begin{array}{ll}
\partial_t w -\Delta_x w(t) +\nabla_x \pi(t) = -(u_1\cdot \nabla )u_1 + (u_2 \cdot \nabla ) u_2 + \rho_{f^1}(t)w(t), & \Omega_T, \\
\div_x w(t) =0, & \Omega_T, 
\end{array} \right.
\end{equation*} which, according to Theorem \ref{thm:Leraystabilite} must imply that $u_1 = u_2$ in $\Omega_T$. In particular, the characteristics associated to $-v + u_1$ and to $-v + u_2$ coincide. Then, $f^1 = f^2$ in $Q_T$.
\end{proof}

\section{Perspectives and comments}
\label{sec: commentsVNS}

We have proved in Theorem \ref{thm:controllability} a null-controllability result for the Vlasov-Navier-Stokes system in dimension 2. Let us make some comments about the possible limitations of this result. \par 

First of all, we observe that a natural limitation concerning dimension comes from the difficulties presented by the three-dimensional Navier-Stokes system. In particular, since the uniqueness of weak solutions for this system is still unknown, there is no hope, a priori, to obtain better results when considering the coupling with a Vlasov equation. \par  

On the other hand, the Navier-Stokes in dimension 2 allows the use of fine stability estimates and a certain regularising effect, which permits the definition of classical characteristics associated with the velocity field. This is essential to describe the absorption procedure of Section \ref{sec: fixedpointVNS}.\par 

Let us emphasise that the controllability result of Theorem \ref{thm:controllability} allows to control at the same time the distribution function of particles and the motion of the fluid in which they are immersed. This can be done thanks to the return method, by exploiting in a crucial manner the two coupling terms of the system: the coupling term in the Vlasov equation and the drag force term present in the Navier-Stokes system. Furthermore, we can achieve the control of all the components with a scalar control acting only on the Vlasov equation. This kind of feature is already well understood in the case of the Navier-Stokes system \cite{CoronGuerrero,CoronLissy}, where the controlled component is arbitrary. In the Vlasov-Navier-Stokes case, our methods work only if the controlled component is the distribution function.   \par 

The result of Theorem \ref{thm:controllability} can be seen as a kinetic version of the result obtained for a fluid-structure system in \cite{BoulakiaOsses} (see \cite {BoulakiaGuerrero} for a three-dimensional result). Indeed, whereas the fluid-structure problems aims at controlling the trajectory of a macroscopic body immersed in a fluid, the kinetic approach allows to treat the dynamics of a cloud of microscopic particles in a fluid, replacing the individual effects of particles by a mesoscopic description.


\begin{appendix}


\section{Review of the Stokes system}
\label{sec: AppendixS}

Let us recall a well-posedness result for the non-stationary Stokes system, i.e.,
\begin{equation}
\left\{ \begin{array}{ll}
\partial_t u -\Delta_x u + \nabla_x p = f, & (t,x) \in (0,T) \times \T^2, \\
\div_x u = 0, & (t,x) \in (0,T) \times \T^2, \\
u|_{t=0}=u_0, & x\in \T^2,
\end{array} \right.
\label{eq:homogeneousStokes}
\end{equation} due to Y. Giga and H. Sohr (see \cite[Theorem 2.8, p.82]{GigaSohr}). This result gives a very general framework for the $L^s_t L^q_x$ well-posedness, under suitable assumptions on the data. Particularly, in the case of initial data, we are lead to the following spaces (see \cite[p.77]{GigaSohr}). Let 
\begin{equation*}
D^{\alpha,s}_{A,q} := \left\{ u \in L^q(\T^2); \, \|u\|_{L^q} + \left( \int_0^{\infty} \| t^{1-\alpha} A e^{-tA} u \|_{L^q} \frac{\ud t}{t} \right)^{\frac{1}{s}} < \infty  \right\},
\end{equation*} where $\alpha \in (0,1)$, $s \in [1,\infty)$, and $A$ is the Stokes operator $A = \mathcal{P}(-\Delta)$, $\mathcal{P}$ being the Leray projector.  In this context, the result is as follows.

\begin{thm}[\cite{GigaSohr}]
Let $1<s<\infty$ and $1<q<\infty$. For any $f\in L^s(0,T;L^q(\T^2)^2)$ and $u_0 \in D_{A,q}^{1-\frac{1}{s},s}$, there exists a unique solution of system (\ref{eq:homogeneousStokes}), satisfying $u\in L^s(0,T;W^{2,q}(\T^2)^2)$ and $\partial_t u \in L^s(0,T;L^q(\T^2)^2)$. Moreover, there exists a constant $C_0>0$ such that 
\begin{equation}
\int_0^T \|\partial_t u(t) \|^s_{L^q} \ud t + \int_0^T \|D^2 u \|_{L^q}^s \ud t \leq C \left(  \int_0^T \|f\|_{L^q}^s \ud t + \|u_0\|_{D_{A,q}^{1-\frac{1}{s},s}} \right).
\label{eq:regularityStokes}
\end{equation}
\label{thm:Stokesclassical}
\end{thm}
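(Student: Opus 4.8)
The plan is to recast system (\ref{eq:homogeneousStokes}) as an abstract parabolic Cauchy problem and then invoke the theory of maximal $L^s$-regularity for sectorial operators. First I would apply the Leray projector $\mathcal{P}$, which on $\T^2$ is the Fourier multiplier with symbol $\mathrm{Id} - |k|^{-2}\,k\otimes k$ for $k\in\Z^2\setminus\{0\}$ and is bounded on $L^q(\T^2)^2$ for every $1<q<\infty$ by the Mikhlin--Hörmander multiplier theorem. Since $\mathcal{P}$ annihilates gradients and, on the torus, commutes with $\Delta$, the system reduces on the solenoidal space $L^q_\sigma := \mathcal{P}\,L^q(\T^2)^2$ to
\begin{equation*}
u' + A u = \mathcal{P} f, \qquad u(0) = u_0, \qquad A := \mathcal{P}(-\Delta),
\end{equation*}
whose mild solution is given by Duhamel's formula
\begin{equation*}
u(t) = e^{-tA}u_0 + \int_0^t e^{-(t-\tau)A}\,\mathcal{P} f(\tau)\,\ud\tau.
\end{equation*}
On the torus $A$ is diagonalised by the Fourier basis, acting as multiplication by $|k|^2$ on each divergence-free mode, so it is self-adjoint and positive, and $-A$ generates a bounded analytic semigroup; in particular $A$ is sectorial of arbitrarily small angle. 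This reduces matters to an abstract maximal-regularity statement together with the identification of the admissible trace space for the initial datum.

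Next I would establish the maximal $L^s$-regularity of $A$, that is, the boundedness on $L^s(0,T;L^q_\sigma)$ of the operator $f \mapsto \int_0^t A e^{-(t-\tau)A}\mathcal{P} f(\tau)\,\ud\tau$, which simultaneously encodes the control of $\partial_t u$ and of $D^2 u$. The cleanest route is the Dore--Venni theorem: the spaces $L^q(\T^2)^2$, $1<q<\infty$, are UMD, and $L^q_\sigma$ is a complemented closed subspace, hence also UMD. It therefore suffices to show that $A$ admits bounded imaginary powers with power angle $<\pi/2$, i.e.\ that $A^{i\theta}$ is bounded on $L^q_\sigma$ with $\|A^{i\theta}\|\le C e^{\eta|\theta|}$ for some $\eta<\pi/2$. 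On $\T^2$ this is again a multiplier statement, since $A^{i\theta}$ is the Fourier multiplier with symbol $|k|^{2i\theta}$, and one verifies the Mikhlin conditions with constants growing at most exponentially in $\theta$. Granting this, Dore--Venni yields a unique $u$ with $u\in L^s(0,T;W^{2,q})$ and $\partial_t u\in L^s(0,T;L^q)$, together with the estimate (\ref{eq:regularityStokes}); the pressure is then recovered a posteriori from $\nabla p = (\mathrm{Id}-\mathcal{P})(f+\Delta u)$, and uniqueness is inherited from that of the abstract Cauchy problem.

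Finally, the role of the data space $D^{1-\frac1s,s}_{A,q}$ is explained by the trace theory of the maximal-regularity class: a function in $L^s(0,T;D(A))$ whose time derivative lies in $L^s(0,T;L^q_\sigma)$ has a well-defined trace at $t=0$ lying exactly in the real interpolation space $(L^q_\sigma, D(A))_{1-\frac1s,s}$, which coincides with $D^{1-\frac1s,s}_{A,q}$; this matches the integral characterisation recorded before the statement and produces the $\|u_0\|_{D^{1-\frac1s,s}_{A,q}}$ term in the estimate. I expect the main obstacle to be the bounded-imaginary-powers step: controlling $\|A^{i\theta}\|$ on $L^q_\sigma$ uniformly, with the correct exponential dependence on $\theta$, is the genuine harmonic-analytic input in which all the difficulty of the Giga--Sohr theorem is concentrated. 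An alternative that avoids the bounded imaginary powers would replace this step by verifying the $\mathcal{R}$-boundedness of the resolvent family $\{\lambda(\lambda+A)^{-1}\}$ on a sector and invoking Weis's characterisation, but the decisive estimate there is of comparable difficulty.
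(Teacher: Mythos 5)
The paper does not prove this statement: it is quoted verbatim from Giga--Sohr \cite{GigaSohr} as background material in the appendix, so there is no internal proof to compare against. That said, your outline is essentially the argument of the cited source itself -- Giga and Sohr's paper is precisely about deriving abstract $L^s$-in-time estimates for the Stokes semigroup from bounded imaginary powers together with the Dore--Venni theorem, and the trace space $D^{1-\frac1s,s}_{A,q}$ is indeed the real interpolation space $(L^q_\sigma, D(A))_{1-\frac1s,s}$ in its standard semigroup characterisation. On the torus every step you list collapses to a Fourier multiplier computation, so the plan is sound and, in this setting, considerably easier than in the exterior domains treated by Giga--Sohr. One small point to repair: $A=\mathcal{P}(-\Delta)$ on $\T^2$ is not positive but only nonnegative, since the constant divergence-free fields lie in its kernel; hence $A^{i\theta}$ is not defined on all of $L^q_\sigma$ as stated. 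You should either split off the zero Fourier mode (where the equation is the trivial ODE $u_0'(t)=(\mathcal{P}f)_0(t)$) and apply Dore--Venni to the mean-free part, or replace $A$ by $A+1$ via the substitution $u\mapsto e^{-t}u$, which is harmless on a bounded time interval. With that adjustment the sketch is a correct proof strategy, though of course it remains a sketch: the Mikhlin verification for $|k|^{2i\theta}$ with at most exponential growth in $\theta$, and the identification of the trace space, are asserted rather than carried out.
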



\section{Review of the Navier-Stokes system on the 2-dimensional torus}
\label{sec: AppendixNS}

We shall need to use some classical results on the Navier-Stokes, that we gather here for reference. \par 
Let us consider $F\in L^2(0,T;\V')$ and $u_0\in \Hdiv$ and the Navier-Stokes system
\begin{equation}
\left\{  \begin{array}{ll}
\partial_t u + \left( u\cdot \nabla \right)u -\Delta u + \nabla p = F, & (t,x) \in (0,T) \times \T^2, \\
\div u(t,x) = 0, & t\in (0,T), \\  
u_{|t=0} = u_0, & x\in T^2.
\end{array} \right.
\label{eq:NS}
\end{equation} Following \cite[p.42]{Chemin}, we shall use the following notion of solution.

\begin{definition}
A time-dependent vector field $u$ is a solution of (\ref{eq:NS}) whenever
\begin{equation}
u \in \C^0([0,T];\V_{\sigma}') \cap L^{\infty}(0,T;\Hdiv) \cap L^2(0,T;\V_{\sigma})
\end{equation} and for any $\psi \in \C^1([0,T];\V_{\sigma})$ and $t\in (0,T]$, one has 
\begin{align}
& \int_{\T^2} u(t)\psi(t) \ud x + \int_0^t \int_{\T^2} \nabla u : \nabla \psi - u \otimes u \cdot \nabla \psi -u\partial_t \psi \ud s \ud x \nonumber \\
& \quad \quad \quad \quad = \int_{\T^2} u_0 \psi(0) \ud x + \int_0^t \langle F(s), \psi(s) \rangle_{\V' \times \V} \ud s. \label{eq:formulationfaibleNS}
\end{align} 
\label{definition:Leraysolution}
\end{definition} We have the following classical result, due to J. Leray (see \cite[Theorem 2.3]{Chemin}).

\begin{thm}
Let $F\in L^2(0,T;\V')$ and $u_0 \in \Hdiv$. There exists a unique global solution of (\ref{eq:NS}), i.e., for any $T>0$, in the sense of Definition \ref{definition:Leraysolution}. Moreover, this solution satisfies the energy estimate
\begin{equation}
\|u(t)\|^2_{L^2} + \int_0^t \|\nabla u(s)\|^2_{L^2} \ud s \leq e^t \left( \|u_0\|_{L^2}^2 + \int_0^t \|F(s)\|_{\V'}^2 \ud s \right).
\label{eq:NSestimate}
\end{equation}
\label{thm:Leray}
\end{thm}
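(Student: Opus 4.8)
The plan is to follow the classical Galerkin scheme, exploiting the two-dimensional structure both for the a priori bounds and for uniqueness; this is the strategy of \cite[Theorem 2.3]{Chemin}, which I sketch here for completeness. First I would set up a spectral (Galerkin) approximation. On $\T^2$ the divergence-free, mean-free Fourier modes $\{e_k\}$ furnish a convenient orthonormal basis of $\Hdiv$; let $P_N$ be the orthogonal projection onto the span of the first $N$ of them. One then seeks $u_N(t)$ in this finite-dimensional space solving the projected system $\partial_t u_N + P_N[(u_N\cdot\nabla)u_N] - \Delta u_N = P_N F$ with $u_N(0)=P_N u_0$. Since the nonlinearity is quadratic and the remaining terms are linear, this is an ODE system with locally Lipschitz right-hand side, so Cauchy--Lipschitz yields a unique maximal solution on some interval $[0,T_N)$.

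Second, I would derive the a priori estimates. Testing the projected equation against $u_N$ and integrating over $\T^2$, the crucial point is that the nonlinear term drops out: since $\div u_N = 0$ one has $\int_{\T^2}(u_N\cdot\nabla)u_N\cdot u_N\ud x = 0$. Using the duality bound $\langle F,u_N\rangle \le \|F\|_{\V_\sigma'}\|u_N\|_{\V_\sigma}$ and Young's inequality to absorb part of the dissipation $\|\nabla u_N\|_{L^2}^2$, one reaches a differential inequality of the form $\tfrac{\ud}{\ud t}\|u_N\|_{L^2}^2 + \|\nabla u_N\|_{L^2}^2 \lesssim \|F\|_{\V_\sigma'}^2 + \|u_N\|_{L^2}^2$; Gronwall's lemma then recovers exactly the energy estimate (\ref{eq:NSestimate}), uniformly in $N$. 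In particular $\|u_N(t)\|_{L^2}$ stays finite, so $T_N=T$ and the approximations are global, and $(u_N)$ is bounded in $L^\infty(0,T;\Hdiv)\cap L^2(0,T;\V_\sigma)$.

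Third, I would pass to the limit by compactness. In two dimensions the Ladyzhenskaya inequality $\|v\|_{L^4}^2\lesssim\|v\|_{L^2}\|\nabla v\|_{L^2}$ shows that $u_N\otimes u_N$ is bounded in $L^2(0,T;L^2)$, whence, reading off the equation, $\partial_t u_N$ is bounded in $L^2(0,T;\V_\sigma')$. The Aubin--Lions--Simon lemma, together with the compact embedding $\V_\sigma\hookrightarrow\Hdiv$, then provides a subsequence converging strongly in $L^2(0,T;\Hdiv)$ and weakly in $L^2(0,T;\V_\sigma)$ to a limit $u$. The strong $L^2$ convergence is precisely what is needed to identify $u_N\otimes u_N\to u\otimes u$ in $L^1$, so one may pass to the limit in the approximate weak formulation and check that $u$ satisfies (\ref{eq:formulationfaibleNS}); the estimate (\ref{eq:NSestimate}) is inherited by lower semicontinuity of the norms.

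Finally, uniqueness. Given two solutions $u_1,u_2$, set $w=u_1-u_2$, which solves the difference equation with $w(0)=0$ and effective source $-(w\cdot\nabla)u_2$ after noting that $\int_{\T^2}(u_1\cdot\nabla)w\cdot w\,\ud x=0$. Testing against $w$ and invoking Ladyzhenskaya once more, $|\langle(w\cdot\nabla)u_2,w\rangle|=|\langle(w\cdot\nabla)w,u_2\rangle|\lesssim\|u_2\|_{L^4}\|w\|_{L^2}^{1/2}\|\nabla w\|_{L^2}^{3/2}$, which by Young's inequality is bounded by $\tfrac12\|\nabla w\|_{L^2}^2 + C\|u_2\|_{L^4}^4\|w\|_{L^2}^2$. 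Since $\|u_2\|_{L^4}^4\lesssim\|u_2\|_{L^2}^2\|\nabla u_2\|_{L^2}^2\in L^1(0,T)$, Gronwall's lemma forces $w\equiv 0$, and then the coincidence of the associated characteristics gives $u_1=u_2$. I expect the main obstacle to be the justification of this last energy identity: a priori one controls only an energy inequality, so one must first establish that $w\in L^2(0,T;\V_\sigma)$ with $\partial_t w\in L^2(0,T;\V_\sigma')$ and invoke the Lions--Magenes lemma to legitimately write $\tfrac{\ud}{\ud t}\|w\|_{L^2}^2=2\langle\partial_t w,w\rangle$. It is exactly here that dimension two is essential, since the control of the nonlinear term that produces this extra time regularity fails in three dimensions.
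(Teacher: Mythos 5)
This is a statement the paper does not prove at all: it is quoted as a classical result of Leray with a pointer to \cite[Theorem 2.3]{Chemin}, and your Galerkin construction, cancellation of the nonlinear term in the energy estimate, Aubin--Lions compactness, and uniqueness via the Ladyzhenskaya inequality is precisely that standard proof, correctly sketched (including the right Young exponents giving $C\|u_2\|_{L^4}^4\|w\|_{L^2}^2$ and the observation that the energy identity for $w$ needs the Lions--Magenes lemma, which is where two dimensions enter). The only blemish is the stray clause ``the coincidence of the associated characteristics gives $u_1=u_2$'', which belongs to the Vlasov part of the paper and is superfluous here, since Gronwall has already forced $w\equiv 0$.
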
 In the 2-dimensional case, we also have some stability estimates (see \cite[Theorem 3.2, p.56]{Chemin}).

\begin{thm} Let $u$ and $v$ be two solutions of system (\ref{eq:NS}) associated to $(u_0,F)$ and $(v_0,G)$ respectively. Then,
\begin{align}
& e^{-t} \| (u-v)(t) \|_{L^2}^2 + \int_0^t \| \nabla(u-v)(s) \|_{L^2}^2 \ud s \nonumber \\
& \quad \quad \quad \leq e^{CE^2(t)} \left( \|u_0 - v_0 \|_{L^2}^2 + \int_0^t \|(F-G)(s) \|_{\V_{\sigma}'} \ud s  \right),
\end{align} with 
\begin{equation*}
E(t):= e^t\min\left\{ \|u_0\|_{L^2}^2 + \int_0^t \|F(s)\|_{\V'}^2 \ud s, \|v_0\|_{L^2}^2 + \int_0^t \|G(s)\|_{\V'}^2 \ud s        \right\}.
\end{equation*}
\label{thm:Leraystabilite}
\end{thm}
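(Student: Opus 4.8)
The plan is to run the classical $L^2$ energy method on the difference $w := u - v$ and to close the estimate with Gronwall's lemma, the only genuinely 2-D ingredient being the Ladyzhenskaya interpolation inequality. First I would subtract the two copies of (\ref{eq:NS}) and rewrite the difference of the convection terms as
\[
(u\cdot\nabla)u - (v\cdot\nabla)v = (u\cdot\nabla)w + (w\cdot\nabla)v,
\]
so that $w$ solves, in the weak sense of Definition \ref{definition:Leraysolution},
\[
\partial_t w - \Delta w + \nabla\pi = (F-G) - (u\cdot\nabla)w - (w\cdot\nabla)v,\qquad \div_x w = 0,\qquad w|_{t=0}=u_0-v_0.
\]
Since $w \in \C^0([0,T];\V_\sigma')\cap L^2(0,T;\V_\sigma)$, after a standard regularisation in time I would use $w$ itself as a test function, which gives
\[
\tfrac12\tfrac{\ud}{\ud t}\|w\|_{L^2}^2 + \|\nabla w\|_{L^2}^2 = \langle F-G, w\rangle_{\V'\times\V_\sigma} - \int_{\T^2}\bigl((w\cdot\nabla)v\bigr)\cdot w\,\ud x,
\]
the term $\int ((u\cdot\nabla)w)\cdot w$ vanishing because $\div_x u = 0$.

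The heart of the argument is the trilinear term. Integrating by parts (again using $\div_x w=0$) I would rewrite it as $-\int ((w\cdot\nabla)w)\cdot v$ and bound it by $\|v\|_{L^4}\|w\|_{L^4}\|\nabla w\|_{L^2}$. The 2-D Gagliardo--Nirenberg (Ladyzhenskaya) inequality $\|\varphi\|_{L^4}^2 \lesssim \|\varphi\|_{L^2}\|\nabla\varphi\|_{L^2} + \|\varphi\|_{L^2}^2$ then yields, after Young's inequality,
\[
\Bigl|\int_{\T^2}\bigl((w\cdot\nabla)v\bigr)\cdot w\,\ud x\Bigr| \leq \tfrac14\|\nabla w\|_{L^2}^2 + C\,\|v\|_{L^2}^2\|\nabla v\|_{L^2}^2\,\|w\|_{L^2}^2 + C\|w\|_{L^2}^2,
\]
while the forcing is handled by duality, $\langle F-G,w\rangle \le \|F-G\|_{\V'}\|\nabla w\|_{L^2}$, and another application of Young's inequality. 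Absorbing the $\|\nabla w\|_{L^2}^2$ contributions into the left-hand side leaves a differential inequality of the form
\[
\tfrac{\ud}{\ud t}\|w\|_{L^2}^2 + \|\nabla w\|_{L^2}^2 \lesssim \bigl(1 + \|v\|_{L^2}^2\|\nabla v\|_{L^2}^2\bigr)\,\|w\|_{L^2}^2 + \|F-G\|_{\V'}^2 .
\]

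To finish I would apply Gronwall's lemma. The zeroth-order coefficient $1$ is split off by working with $e^{-t}\|w\|_{L^2}^2$ (this is what produces the $e^{-t}$ weight on the left-hand side of the stated estimate), and the remaining integrating factor is $\exp\bigl(C\int_0^t \|v\|_{L^2}^2\|\nabla v\|_{L^2}^2\,\ud s\bigr)$. Here the product structure is decisive: bounding $\|v(s)\|_{L^2}^2$ by its supremum and invoking the energy estimate (\ref{eq:NSestimate}) of Theorem \ref{thm:Leray} gives
\[
\int_0^t \|v\|_{L^2}^2\|\nabla v\|_{L^2}^2\,\ud s \le \Bigl(\sup_{s\le t}\|v(s)\|_{L^2}^2\Bigr)\int_0^t\|\nabla v\|_{L^2}^2\,\ud s \le E(t)^2,
\]
which is exactly where the quadratic dependence $e^{CE^2(t)}$ originates. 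Because the convection difference can equally be split as $(v\cdot\nabla)w + (w\cdot\nabla)u$, thereby replacing $v$ by $u$ throughout the estimate, one obtains the same bound with $\|u\|_{L^2}^2\|\nabla u\|_{L^2}^2$, and keeping the better of the two gives the minimum in the definition of $E(t)$.

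The main obstacle, beyond the bookkeeping of the exponential factors, is the zero Fourier mode on the torus: $w$ need not be mean-free (its mean is driven by the mean of $F-G$), so the Ladyzhenskaya inequality must be used in its corrected form with the extra $\|\cdot\|_{L^2}^2$ term and the mean handled through the $\V_\sigma'$ duality pairing; this is precisely the point where the stated form of the estimate, with its $e^{-t}$ weight and its dependence on $\|F-G\|_{\V_\sigma'}$, follows the computation of \cite[Theorem 3.2, p.56]{Chemin}, to which I would refer for the precise constants.
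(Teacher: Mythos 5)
This statement is not proved in the paper at all: it is imported verbatim as a known result, with the proof delegated to \cite[Theorem 3.2, p.56]{Chemin}, and your energy-method argument (testing the difference equation with $w$, killing $\int((u\cdot\nabla)w)\cdot w$ by incompressibility, estimating the remaining trilinear term via Ladyzhenskaya's inequality, and closing with Gronwall, with the symmetrization $u\leftrightarrow v$ producing the minimum in $E(t)$) is precisely the standard proof given in that reference. It is correct modulo routine constant bookkeeping on the torus (the non-mean-free modes only affect the constants, as you note), and the missing square on $\|(F-G)(s)\|_{\V_{\sigma}'}$ in the stated inequality is a typo in the paper --- your derivation correctly produces the squared norm.
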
 Moreover, in the specific case of the 2-dimensional torus, we have the following result of propagation of regularity (see \cite[Theorem 3.7, p. 80]{Chemin}.

\begin{thm}
Let $u_0 \in H^{\frac{1}{2}}_0(\T^2) \cap \Hdiv$ and $F \in L^2(0,T; H^{-\frac{1}{2}}_0(\T^2))$. Then the unique solution of (\ref{eq:NS}) satisfies
\begin{equation}
u \in \C^0([0,T];H^{\frac{1}{2}}_0(\T^2)) \cap L^2(0,T;H^{\frac{1}{2}}_0(\T^2))  
\label{eq:NSregularite}
\end{equation}  and 
\begin{align}
& \|u(t,x)\|^2_{H^{\frac{1}{2}}_0} + \int_0^t \|\nabla u(s)\|^2_{H^{\frac{1}{2}}_0} \ud s \label{eq:NSestimate32} \\
& \quad \quad \quad \leq exp \left( c\int_0^t \|\nabla u(s) \|_{L^2}^2 \ud s \right) \left( \|u_0\|_{H^{\frac{1}{2}}_0}^2 + \int_0^t \|F(s)\|_{H^{-\frac{1}{2}}_0}^2 \ud s \right). \nonumber 
\end{align}
\label{thm:Lerayregularite}
\end{thm}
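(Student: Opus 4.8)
The plan is to establish the estimate \eqref{eq:NSestimate32} as an a priori bound on the solution already furnished by Theorem \ref{thm:Leray}, and then to read off \eqref{eq:NSregularite} from that bound. Throughout I work with the homogeneous mean-free Fourier norms $\|\cdot\|_{H^s_0}$ of \eqref{eq:normsFourier}. Two bookkeeping facts are used repeatedly: on mean-free fields $\|\nabla u\|_{H^{1/2}_0}^2 = \sum_{k\in\Z^2\setminus\{0\}} |k|^3|u_k|^2 = \|u\|_{H^{3/2}_0}^2$, and the pairing between $H^{-1/2}_0$ and $H^{1/2}_0$ (or $H^{3/2}_0$) is realised coefficientwise. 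Since $u_0$ and $F$ are mean-free, so is $u$, and by Theorem \ref{thm:Leray} we already know $u\in L^\infty(0,T;\Hdiv)\cap L^2(0,T;\V_{\sigma})$, hence $\int_0^T\|\nabla u(s)\|_{L^2}^2\,\ud s<\infty$; this finiteness is exactly what makes the Gronwall factor in \eqref{eq:NSestimate32} meaningful.

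The heart of the matter is an energy estimate at the $H^{1/2}_0$ level. Applying $(-\Delta)^{1/4}$ to \eqref{eq:NS} and pairing with $(-\Delta)^{1/4}u$ (that is, testing the equation against $u$ in $H^{1/2}_0$, legitimate after a regularisation), the pressure disappears by incompressibility and one obtains
\begin{equation*}
\frac12\frac{\ud}{\ud t}\|u\|_{H^{1/2}_0}^2 + \|\nabla u\|_{H^{1/2}_0}^2 = -\big((u\cdot\nabla)u,\,u\big)_{H^{1/2}_0} + \big(F,\,u\big)_{H^{1/2}_0}.
\end{equation*}
The source term is immediate: by duality and the identity above, $|(F,u)_{H^{1/2}_0}|\le \|F\|_{H^{-1/2}_0}\|u\|_{H^{3/2}_0} = \|F\|_{H^{-1/2}_0}\|\nabla u\|_{H^{1/2}_0}$, which Young's inequality absorbs into $\tfrac14\|\nabla u\|_{H^{1/2}_0}^2 + C\|F\|_{H^{-1/2}_0}^2$.

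The only delicate step — and the one where two-dimensionality is essential — is the trilinear term. Writing $(u\cdot\nabla)u=\div(u\otimes u)$ by incompressibility, duality gives
\begin{equation*}
\big|((u\cdot\nabla)u,u)_{H^{1/2}_0}\big| \le \|\div(u\otimes u)\|_{H^{-1/2}_0}\,\|\nabla u\|_{H^{1/2}_0} \lesssim \|u\otimes u\|_{H^{1/2}_0}\,\|\nabla u\|_{H^{1/2}_0}.
\end{equation*}
Here I would invoke the two-dimensional Sobolev product law (proved through a Littlewood–Paley/Bony paraproduct decomposition, in the Fourier framework of \cite{Chemin}): for $s_1,s_2<1$ with $s_1+s_2>0$ one has $H^{s_1}_0\cdot H^{s_2}_0\hookrightarrow H^{s_1+s_2-1}_0$ on $\T^2$. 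Taking $s_1=s_2=\tfrac34$ yields $\|u\otimes u\|_{H^{1/2}_0}\lesssim\|u\|_{H^{3/4}_0}^2$, and interpolating $\|u\|_{H^{3/4}_0}\lesssim\|u\|_{H^{1/2}_0}^{1/2}\|\nabla u\|_{L^2}^{1/2}$ gives $\|u\otimes u\|_{H^{1/2}_0}\lesssim\|u\|_{H^{1/2}_0}\|\nabla u\|_{L^2}$. Thus the trilinear term is bounded by $\|u\|_{H^{1/2}_0}\|\nabla u\|_{L^2}\|\nabla u\|_{H^{1/2}_0}$, and Young's inequality splits this as $\tfrac14\|\nabla u\|_{H^{1/2}_0}^2 + C\|\nabla u\|_{L^2}^2\|u\|_{H^{1/2}_0}^2$. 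The crucial feature is that the coefficient of $\|u\|_{H^{1/2}_0}^2$ is $\|\nabla u\|_{L^2}^2$, integrable in time by the base energy estimate: this is the integrable weight that produces the Gronwall factor, and it is available precisely because the scaling-critical Sobolev index in dimension two is $L^2$, so $H^{1/2}$ is subcritical and no cancellation in the nonlinearity needs to be chased.

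Collecting the two bounds absorbs the diffusion terms and leaves
\begin{equation*}
\frac{\ud}{\ud t}\|u\|_{H^{1/2}_0}^2 + \|\nabla u\|_{H^{1/2}_0}^2 \le C\|\nabla u\|_{L^2}^2\|u\|_{H^{1/2}_0}^2 + C\|F\|_{H^{-1/2}_0}^2,
\end{equation*}
and Gronwall's lemma with integrating factor $\exp\!\big(C\int_0^t\|\nabla u\|_{L^2}^2\,\ud s\big)$ yields exactly \eqref{eq:NSestimate32}. To render the computation rigorous rather than formal I would run it on a smooth approximating sequence (Galerkin truncations, or a Friedrichs mollification of the nonlinear term), for which the pairing and integration by parts are licit, derive the bound uniformly, and pass to the limit using the weak/strong convergence of Leray's construction; uniqueness in Theorem \ref{thm:Leray} then identifies the limit with $u$. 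Finally, the bound shows $u\in L^\infty(0,T;H^{1/2}_0)\cap L^2(0,T;H^{3/2}_0)$, whence $\partial_t u\in L^2(0,T;H^{-1/2}_0)$ from the equation, and a standard interpolation argument (of \cite[Theorem II.5.13]{Boyer} type) upgrades this to $u\in\C^0([0,T];H^{1/2}_0)$, giving \eqref{eq:NSregularite}. The main obstacle is thus concentrated entirely in the product estimate for $u\otimes u$.
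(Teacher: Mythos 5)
Your proof is correct, and it coincides with the standard argument: note that the paper does not prove Theorem \ref{thm:Lerayregularite} at all, but merely recalls it in Appendix \ref{sec: AppendixNS} with a citation to \cite[Theorem 3.7, p.~80]{Chemin}. Your derivation — the $H^{1/2}_0$-level energy identity, the estimate of the trilinear term through $\div(u\otimes u)$, the two-dimensional product law $H^{3/4}\cdot H^{3/4}\hookrightarrow H^{1/2}$ combined with the interpolation $\|u\|_{H^{3/4}_0}\lesssim\|u\|_{H^{1/2}_0}^{1/2}\|\nabla u\|_{L^2}^{1/2}$ to produce the integrable Gronwall weight $\|\nabla u\|_{L^2}^2$, and the final upgrade to $\C^0_tH^{1/2}_0$ via $\partial_t u\in L^2(0,T;H^{-1/2}_0)$ — is precisely the proof in that reference, up to harmless multiplicative constants in front of $\int_0^t\|F\|^2_{H^{-1/2}_0}$ that the paper's applications never rely on.
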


\end{appendix}

\bibliographystyle{plain}                            

\end{document}